\newcommand{\wtilde}[1]{\widetilde{#1}}
\newcommand{\Hy}{\mathbb{H}}
\newcommand{\bbP}{\mathbb{P}}
\newcommand{\R}{\mathbb{R}}
\newcommand{\Z}{\mathbb{Z}}
\newcommand{\calA}{\mathcal{A}}
\newcommand{\calC}{\mathcal{C}}
\newcommand{\calD}{\mathcal{D}}
\newcommand{\calG}{\mathcal{G}}
\newcommand{\calP}{\mathcal{P}}
\newcommand{\scrC}{\mathscr{C}}
\newcommand{\scrD}{\mathscr{D}}
\newcommand{\scrT}{\mathscr{T}}
\newcommand{\scrV}{\mathscr{V}}
\newcommand{\al}{\alpha}
\newcommand{\gam}{\gamma}
\newcommand{\del}{\delta}
\newcommand{\Del}{\Delta}
\newcommand{\ep}{\epsilon}
\newcommand{\Thet}{\Theta}
\newcommand{\lam}{\lambda}
\newcommand{\Lam}{\Lambda}
\newcommand{\om}{\omega}
\DeclareMathOperator{\Isom}{Isom}
\DeclareMathOperator{\Aut}{Aut}
\DeclareMathOperator{\Inn}{Inn}
\DeclareMathOperator{\Out}{Out}
\DeclareMathOperator{\Mod}{Mod}
\DeclareMathOperator{\Dil}{Dil}
\newcommand{\bs}{\backslash}
\newcommand{\ra}{\rightarrow}
\renewcommand{\epsilon}{\varepsilon}
\newcommand{\groprod}[3]{\left<#1|#2\right>_{#3}}
\newcommand{\corchete}[1]{\left\{{#1}\right\}}
\newcommand{\ov}[1]{\overline{#1}}
\newcommand{\CAT}[1]{\textup{CAT}(#1)}
\newcommand{\Curr}{\calC urr}
\newcommand{\PCurr}{\bbP\calC urr}
\renewcommand*{\backref}[1]{}
\renewcommand*{\backrefalt}[4]{\quad \tiny 
  \ifcase #1 (\textbf{NOT CITED.})%
  \or    (Cited on page~#2.)%
  \else   (Cited on pages~#2.)%
  \fi}
\def\MRbibitem{\@ifnextchar[\my@lbibitem\my@bibitem}
\def\mybiblabel#1#2{\@biblabel{{\hyperref{http://www.ams.org/mathscinet-getitem?mr=#1}{}{}{#2}}}}
\def\myhyperanchor#1{\Hy@raisedlink{\hyper@anchorstart{cite.#1}\hyper@anchorend}}
\def\my@lbibitem[#1]#2#3#4\par{%
  \item[\mybiblabel{#2}{#1}\myhyperanchor{#3}\hfill]#4%
  \@ifundefined{ifbackrefparscan}{}{\BR@backref{#3}}%
  \if@filesw{\let\protect\noexpand\immediate
    \write\@auxout{\string\bibcite{#3}{#1}}}\fi\ignorespaces%
}
\def\my@bibitem#1#2#3\par{%
  \refstepcounter\@listctr
  \item[\mybiblabel{#1}{\the\value\@listctr}\myhyperanchor{#2}\hfill]#3%
  \@ifundefined{ifbackrefparscan}{}{\BR@backref{#2}}%
  \if@filesw\immediate\write\@auxout
    {\string\bibcite{#2}{\the\value\@listctr}}\fi\ignorespaces%
}
\newtheorem{thm}{Theorem}[section]
\newtheorem{prop}[thm]{Proposition}
\newtheorem{lemma}[thm]{Lemma}
\newtheorem{coro}[thm]{Corollary}
\theoremstyle{remark}\newtheorem{rmk}[thm]{Remark}
\theoremstyle{definition}
\newtheorem{defi}[thm]{Definition}
\begin{document}

\title{\textbf{The space of metric structures on hyperbolic groups}}

\author{\small{Eduardo Oreg\'on-Reyes}}

\markboth{E Oreg\'on Reyes}{Nombreartículo}

\date{}
\maketitle

\begin{abstract} We study the metric and topological properties of the space $\scrD(G)$ of left-invariant hyperbolic pseudometrics on the non-elementary hyperbolic group $G$ that are quasi-isometric to a word metric, up to rough similarity. This space naturally contains the Teichmüller space in case $G$ is a surface group and the Culler-Vogtmann outer space when $G$ is a free group. Endowed with a natural metric reminiscent of the (symmetrized) Thurston's metric on Teichmüller space, we prove that $\scrD(G)$ is an unbounded contractible metric space and that $\Out(G)$ acts metrically properly by isometries on it. If we restrict ourselves to the subspace $\scrD_\del(G)$ of the points represented by $\del$-hyperbolic pseudometrics with critical exponent 1, we prove that it is either empty or proper. We also prove continuity of the Bowen-Margulis map from $\scrD_\del(G)$ into the space $\bbP\Curr(G)$ of projective geodesic currents on $G$, extending similar results for surface and free groups, and the continuity of the (normalized) mean distortion as a function on $\scrD(G)\times \scrD(G)$.
\end{abstract}

\paragraph{\hspace{12.5mm}\textbf{Mathematics Subject Classification (2010).}} 20F67, 20F65, 51F30.

\section{Introduction}
Let $G$ be a non-elementary hyperbolic group. If $G$ is either a surface group or a non-abelian free group, there are contractible spaces parametrizing nice geometric structures on $G$, namely, the Teichmüller space $\scrT(G)$ and the Culler-Vogtmann outer space $\scrC\scrV(G)$ \cite{CullerVogtmann1986Outerspace}, respectively. These spaces have been studied extensively, and nowadays they (and their generalizations) are standard tools in the understanding of mapping class groups, outer automorphism groups of free groups, and related notions in group theory and low-dimensional topology \cite{Bestvina2002ICM,DahmaniHorbez2018RandomwalksMCG+Out,FarbMargalit2012primer,GuirardelHorbez2021MeasEquivRigOut(Fn),Otal1998HypHaken,Wienhard2019ICM}.

The situation differs dramatically in higher dimensions, since by Mostow rigidity, if $G$ is the fundamental group of a closed aspherical manifold $M$ of dimension $n\geq 3$, then there exists at most one hyperbolic structure on $M$. This gets worse in dimensions $n\geq 5$, as there are examples of such $M$ admitting no negatively curved Riemannian metrics \cite[Thm.~5c.1 and Rmk.~on p.~386]{DavisJanuszkiewicz1991hyppolyhedra}.

Besides examples coming from geometric topology or representation theory, finding interesting deformation spaces for arbitrary hyperbolic groups is in general complicated, and most constructions rely on the existence of non-trivial isometric actions on $\R$-trees (see e.g.~\cite{
Clay2005ContractdefspaceGtrees,GuirardelLevitt2007Outerspacefreeproduct,Paulin1988TopGromovRtrees}). However, since Gromov hyperbolicity is a metric property, it is natural to consider structures associated to interesting isometric actions of $G$ on hyperbolic metric spaces, under the appropriate equivalence relation. This coarse-geometric perspective was adopted in \cite{Furman2002coarsenegcurv} by Furman, where he considered the space of all hyperbolic, left-invariant metrics on $G$ that are quasi-isometric to a word metric with respect to a symmetric and finite generating set of $G$. 

Since we also want to include hyperbolic groups with torsion, it is more convenient to consider the space $\calD(G)$ of all hyperbolic, left-invariant \emph{pseudometrics} that are quasi-isometric to a word metric. The relevant object is then the quotient of $\calD(G)$ under rough similarity, where two pseudometrics $d,d'$ on a set $X$ are \emph{roughly similar} if there exist $k,A>0$ such that $|d-k\cdot d'|\leq A$.
\begin{defi}
Let $G$ be a non-elementary hyperbolic group. The \emph{space of metric structures} on $G$ is $\scrD(G)$, the set of equivalence classes of pseudometrics in $\calD(G)$, where two pseudometrics are in the same class if they are roughly similar.

Points in $\scrD(G)$ will be called \emph{metric structures}, and we denote by $[d]$ the metric structure induced by $d\in \calD(G)$.
\end{defi}
Metrics structures on $G$ can be defined from different sources, such as word metrics on $G$, Green metrics associated to symmetric finitely supported and generating random walks on $G$ \cite{BlachereHaissinskyMathieu2011HarmvsQconf}, geometric actions of $G$ on $\CAT{0}$ cube complexes, non-positively curved Riemannian metrics on the closed manifold $M$ in case $G=\pi_1(M)$, Anosov representations of $G$ in higher rank simple Lie groups \cite{DeyKapovich2019PSAnosov}, and more generally from any geometric action of $G$ on a proper geodesic metric space. 

Including pseudometrics to the definition of $\calD(G)$ allows metric structures induced by geometric actions that are not necessarily free. In any case, every metric structure is represented by a genuine metric, since if $d'\in \calD(G)$ is a pseudometric, then for any $\ep>0$ the metric $d_\ep(x,x)=0$ and $d_\ep(x,y):=d'(x,y)+\ep$
for $x\neq y \in G$ is in the same rough similarity class of $d$. Also, if $G$ is torsion free, then every pseudometric in $\calD(G)$ is a metric, see Remark \ref{torsionfree}. 

By the work of Coornaert \cite{Coornaert1993PSmeasures}, Furman \cite{Furman2002coarsenegcurv}, and Bader and Furman \cite{BaderFurman2017Ergodichyp}, a metric structure $\rho=[d]$ can be recovered from various objects related to $d$. These include:
\begin{itemize}
    \item The homothety class of the \emph{marked length spectrum} of $d$, which is the function $\ell_d:G \ra \R$ given by
    $$\ell_d(x)=\lim_{n\to \infty}{\frac{d(x^n,1)}{n}}, \hspace{2mm} \text{ for }x\in G.$$
    \item The measure class of any \emph{quasiconformal measure} for $d$ on the Gromov boundary $\partial G$, introduced by Coornaert \cite{Coornaert1993PSmeasures}.
    \item The \emph{projective Bowen-Margulis current} $BM(\rho)$ on the double boundary $\partial^2 G$.
\end{itemize}
From these characterizations we get natural injective maps from the Teichmüller space $\scrT(G)$ into $\scrD(G)$, provided that $G=\pi_1(S)$ is a surface group. Moreover, by the solution of the Marked Length Spectrum Rigidity conjecture for negatively curved metrics \cite{Otal1990MLSRigidity}, $\scrD(G)$ contains a copy of the space of (homothety classes of) all negatively curved metrics on $S$ up to isotopy. Similarly, when $G$ is a non-abelian free group, there is a canonical injective map from the outer space $\scrC\scrV(G)$ into $\scrD(G)$ \cite{FrancavigliaMartino2011Metricouter}.

The space $\scrD(G)$ is equipped with a natural metric $\Del$, which measures how far are two quasi-isometric pseudometrics from being roughly similar.
\begin{defi}
Given $\rho_1=[d_1],\rho_2=[d_2]\in \scrD(G)$, we define
$$\Lambda(\rho_1,\rho_2):=\inf\corchete{\lam_1\lam_2 \colon \exists A\geq 0 \text{ s.t. }\frac{1}{\lam_1}d_2-A\leq d_1\leq \lam_2d_2+A},$$
and $$\Del(\rho_1,\rho_2):=\log\Lam(\rho_1,\rho_2).$$
\end{defi}
It is clear that $\Lam$ and $\Del$ are independent of the representatives $d_1$ and $d_2$, and that $\Del$ is nonnegative, symmetric and satisfies the triangular inequality. If $\Del(\rho_1,\rho_2)=0$ then $d_1$ and $d_2$ have homothetic marked length spectra, and hence $\rho_1=\rho_2$ (see also \cite[Thm.~1.1]{Krat2001Pairsmetrics} or \cite[Thm.~1.2]{CantrellTanaka2021Manhattan}), implying that  $(\scrD(G),\Del)$ is a metric space. 

The metric $\Del$ is not at all new, and can be recovered from the marked length spectra of the metric structures (see Proposition \ref{Lambda=DilDil} below). Indeed, when $G=\pi_1(S)$ is a surface group, the restriction of $\Del$ to $\scrT(S)$ coincides with the (symmetrized) Thurston's metric \cite{Thurston1998Minstretchmaps}, and hence the injection $\scrT(S) \ra \scrD(G)$ is continuous (see Remark \ref{Thurstonmetric}). The work of Francaviglia and Martino \cite{FrancavigliaMartino2011Metricouter} implies the same result in the case of free groups, so that the injection $\scrC\scrV(F_k) \ra \scrD(F_k)$ is also continuous.

There are some results regarding the intersection of interesting subsets of $\scrD(G)$. For example, Gouëzel, Math\'eus and Maucourant proved that if $G$ admits a word metric $d_S$ and a finitely supported symmetric Green metric $d_\mu$ such that $[d_S]=[d_\mu]$, then $G$ is virtually free \cite[Thm.~1.3]{GouezelMatheusMaucourant2018Entropydrifthyp}. In a similar spirit, Fricke and Furman showed that if $G=\pi_1(S)$ is a surface group, then the Teichmüller space $\scrT(S)$ is precisely the intersection of the set of metric structures coming from negatively curved metrics on $S$ and the space of metric structures induced by quasi-Fuchsian representations of $G$ \cite[Thm.~A]{FrickerFurman2021QFuchsvsNegcurv}. A famous open problem states that if $G$ is a cocompact lattice in the real hyperbolic space $\Hy^n$ and $d_\mu$ is a Green metric associated to finitely supported symmetric random walk on $G$, then $[d_{\Hy^n}]\neq [d_\mu]$. Recently, there has been some progress in the 2-dimensional case, see \cite{KosenkoTiozzi2021FundineqFuchsian}.

The goal of this paper is to start the study of $(\scrD(G),\Del)$ from a global and geometric perspective. Now we state our results.

\subsection{Contractibility and properness}
Our first result is an analogue of a well-known property for Teichmüller space and outer space.
\begin{thm}\label{unboundedcontractible}
The metric space $(\scrD(G),\Del)$ is unbounded and contractible.
\end{thm}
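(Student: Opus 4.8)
The plan is to establish the two assertions separately, proving unboundedness by an explicit pinching family and contractibility by a deformation retraction onto a word metric.

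\textbf{Unboundedness.} Fix a word metric $d_S$, put $\rho_0=[d_S]$, and choose two independent loxodromic elements $a,b\in G$ (elements of infinite order generating a quasiconvex free subgroup of rank two, which exist because $G$ is non-elementary). For each $n\geq 1$ I would consider the weighted word pseudometric $d_n$ in which the generator $a$ has length $1$ and every other generator has length $n$; for fixed $n$ this is bi-Lipschitz to $d_S$ through the corresponding (geodesic) weighted Cayley graph, hence is hyperbolic and lies in $\calD(G)$. Since $\Lam$ is bounded below by the supremal ratio of marked length spectra (Proposition~\ref{Lambda=DilDil}), it suffices to make $\ell_{d_n}(b)/\ell_{d_n}(a)$ blow up. Here $\ell_{d_n}(a)\leq d_n(1,a)\leq 1$, while any path representing $b^k$ must traverse a number of \emph{expensive} edges proportional to $k$ --- an elementary ping-pong estimate using that $b$ is not commensurable with $a$ --- so $\ell_{d_n}(b)\geq cn$ for some $c>0$. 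Hence $\Del(\rho_0,[d_n])\geq \log(cn)\to\infty$.

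\textbf{Contractibility.} I would contract $\scrD(G)$ to $\rho_0=[d_S]$ through the straight-line homotopy
\[
H(\rho,t)=\bigl[(1-t)\,d_S+t\,d_\rho\bigr],\qquad t\in[0,1],
\]
so that $H(\rho,0)=\rho_0$ and $H(\rho,1)=\rho$. For this to be well defined I must first select, for each $\rho$, a representative $d_\rho$ that is \emph{roughly geodesic}, that varies continuously with $\rho$, and that is normalized (say so that its critical exponent equals $1$) to kill the rescaling ambiguity, with $d_{\rho_0}=d_S$.

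The crux is that each $d_t:=(1-t)d_S+t\,d_\rho$ must again lie in $\calD(G)$, and this is the main obstacle: a convex combination of two hyperbolic metrics need not be hyperbolic. Left-invariance is clear, and from $\tfrac1\lam d_S-A\leq d_\rho\leq\lam d_S+A$ one sees $d_t$ is quasi-isometric to $d_S$; the real content is hyperbolicity. I would deduce it by showing $d_t$ is roughly geodesic: given $x,y\in G$, take a $d_S$-geodesic chain $x=z_0,\dots,z_n=y$; since the identity is a quasi-isometry from $(G,d_S)$ onto the hyperbolic roughly geodesic space $(G,d_\rho)$, this chain is a $d_\rho$-quasigeodesic, and by stability of quasigeodesics (the Morse lemma) it is almost additive, $d_\rho(x,z_i)+d_\rho(z_i,y)\leq d_\rho(x,y)+C$. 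Combined with exact additivity for $d_S$, the same chain is a uniform rough geodesic for $d_t$, and a roughly geodesic pseudometric quasi-isometric to a hyperbolic geodesic space is hyperbolic; thus $d_t\in\calD(G)$.

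Finally I must check that $H$ is continuous for $\Del$. Along each fiber this is a Lipschitz estimate: writing $d_t-d_s=(s-t)(d_\rho-d_S)$ and bounding $|d_\rho-d_S|$ by a multiple of $d_S$ plus a constant yields $\Del([d_t],[d_s])\leq c'|t-s|$, and the same comparison gives $\Del(H(\rho,t),\rho_0)\to 0$ as $t\to0$. Joint continuity then reduces to the continuous dependence of the representatives $d_\rho$ together with local boundedness of their quasi-isometry constants. I expect the hyperbolicity-preservation step to be the conceptual heart of the argument, and the construction of continuously varying roughly geodesic representatives to be the principal technical hurdle.
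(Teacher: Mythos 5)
Your contractibility argument is essentially the paper's: the paper also uses the straight-line homotopy $H(\rho,t)=[td+(1-t)d_0]$, and its Lemma~\ref{sumhypishyp} (hyperbolicity of $d_1+d_2$) is proved via the quasiruler criterion of Blach\`ere--Ha\"issinsky--Mathieu, which is the same almost-additivity mechanism you invoke (a $d_S$-geodesic chain is almost additive for $d_\rho$ by the Morse lemma, hence for the sum). One simplification you are missing: there is no need for a continuously varying or roughly geodesic selection $\rho\mapsto d_\rho$. Taking \emph{any} representative $d\in\hat\rho$ with critical exponent $1$ suffices, because any two such representatives are roughly \emph{isometric} (not merely roughly similar), so the class $[td+(1-t)d_0]$ is independent of the choice; continuity of $H$ is then a direct two-line estimate with the constants $\Lam(\rho,\rho')$. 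What you call the ``principal technical hurdle'' thus disappears.

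The unboundedness argument, however, has a genuine gap at the step $\ell_{d_n}(b)\geq cn$. This is not an elementary ping-pong estimate: it asserts that every word representing $b^k$ must contain at least $ck$ letters outside $\langle a\rangle$, i.e.\ that $b$ has positive stable translation length in the Cayley graph with the cosets of $\langle a\rangle$ coned off. Quasiconvexity of $\langle a,b\rangle$ does not give this, since the competing word for $b^k$ may be enormously long in $d_S$ and its intermediate points need not lie near $\langle a,b\rangle$; what is actually needed is relative hyperbolicity of $G$ with respect to the maximal elementary subgroup $E(a)$ (or a uniform bounded-projection estimate for all translates $g\langle a\rangle$ onto the axis of $b$). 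This is true, but it is a substantive input that your sketch does not supply. The paper avoids needing any lower bound of this kind: it adjoins $x^{\pm n}$ to the generating set so that $\ell_{d_{S_n}}(x)\leq 1/n$, normalizes every metric to critical exponent $1$, and then uses Lemma~\ref{normalizedcriticalexponent} (for normalized metrics $\Dil(d,d')\geq 1$) together with Proposition~\ref{Lambda=DilDil} to conclude $\Lam(\rho_1,\rho_n)\geq \Dil(d_1,d_n)\geq \ell_{d_1}(x)/\ell_{d_n}(x)\to\infty$ from the easy \emph{upper} bound alone. If you adopt that normalization trick, your weighted-metric family also works without the problematic lower bound on $\ell_{d_n}(b)$.
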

In similar settings, contractibility is in general a non trivial task, but in our case it will follow almost immediately from the fact that $\calD(G)$ is closed under addition of pseudometrics. Unboundedness is interesting, since it holds even when $\Out(G)$ is finite. We exhibit an unbounded sequence in $\scrD(G)$ by finding metrics that (almost) kill arbitrarily large powers of some infinite order element of $G$, with some similarity to group theoretical Dehn filling.

To state our properness results, in principle we must restrict to smaller subspaces of $\scrD(G)$. 
\begin{defi}
\begin{itemize}
    \item $\scrD_\del(G)$ is the space of all metric structures $\rho$ such that there is some $\del$-hyperbolic pseudometric $d\in \rho$ with critical exponent 1.
    \item $\scrD_{\del,\al}(G)$ is the space of all metric structures $\rho$ such that there is some $\del$-hyperbolic and $\al$-rough geodesic pseudometric $d\in \rho$ with critical exponent 1.
\end{itemize}
\end{defi}
See Section \ref{secprelim} for the relevant definitions. When $G$ is a non-abelian free group, $\scrD_0(G)$ is exactly the outer space $\scrC \scrV(G)$. Similarly, since $\Hy^2$ is $\log{2}$-hyperbolic \cite[Cor.~5.4]{NicaSpakula2016stronghyp} and the critical exponent of any cocompact lattice in $\Hy^2$ is 1, when $G$ is a surface group we have the injection $\scrT(G)\hookrightarrow \scrD_{\log{2}}(G)$. Properness of $\scrT(G)$ and $\scrC\scrV(G)$ are then extended by the next result.

\begin{thm}\label{proper}
For any $\del, \al \geq 0$ we have:
\begin{itemize}
    \item[i)] $\scrD_{\del}(G)$ is either empty or a proper subspace of $(\scrD(G),\Del)$, and;
    \item[ii)] $\scrD_{\del,\al}(G)$ is either empty or proper.
\end{itemize}

\end{thm}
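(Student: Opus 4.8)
The plan is to prove both statements by showing that every $\Del$-bounded sequence in the relevant space has a subsequence converging inside that space, handling the rough geodesic case (ii) by an Arzel\`a--Ascoli argument on representatives and the general case (i) through marked length spectra and geodesic currents. Assuming the space is non-empty, fix a basepoint $\rho_0=[d_0]$ in it, and given $\rho_n$ with $\Del(\rho_0,\rho_n)\le R$ choose for each $n$ a representative $d_n$ that is $\del$-hyperbolic (and $\al$-rough geodesic in case (ii)) with critical exponent $1$. The first step is to convert the $\Del$-bound into a uniform two-sided comparison of marked length spectra: by Proposition~\ref{Lambda=DilDil} the quantity $\Lam$ is computed from the functions $\ell_{d_n}$, and writing $\Lam(\rho_0,\rho_n)=\mathrm{Dil}(\rho_0,\rho_n)\,\mathrm{Dil}(\rho_n,\rho_0)\le e^{R}$, the common critical exponent $1$ forces each factor to be at least $1$ (neither spectrum can uniformly dominate the other without changing the exponent), so the bound splits into $c^{-1}\ell_{d_0}(g)\le \ell_{d_n}(g)\le c\,\ell_{d_0}(g)$ for all $g\in G$, with $c=c(R)$.

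For part (ii), the $\al$-rough geodesic condition together with the quasi-isometry to a fixed word metric $d_S$ makes the family $\{d_n\}$ uniformly Lipschitz in $d_S$, so after normalizing each $d_n$ at the identity I would apply a diagonal Arzel\`a--Ascoli argument over the countable set $G\times G$ to extract a subsequence converging pointwise to a pseudometric $d_\infty$. The four-point $\del$-hyperbolicity inequality, the $\al$-rough geodesic condition, and the comparison with $d_S$ are all closed under pointwise limits, so $d_\infty$ is again $\del$-hyperbolic, $\al$-rough geodesic, and quasi-isometric to $d_S$; its critical exponent is again $1$ by continuity of the growth function under this uniform limit (equivalently, because the associated conformal measures retain dimension $1$). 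Thus $\rho_\infty=[d_\infty]\in\scrD_{\del,\al}(G)$, and the equicontinuity just established upgrades pointwise convergence of the $\ell_{d_n}$ to $\Del(\rho_n,\rho_\infty)\to 0$. This shows both that $\scrD_{\del,\al}(G)$ is complete and that its closed $\Del$-balls are compact, i.e. it is proper.

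For part (i) the rough geodesic constant need not stay bounded along the sequence, so the pseudometrics $d_n$ may degenerate even though their marked length spectra remain comparable, and I would argue entirely through spectra and currents. Passing to the projective Bowen--Margulis currents $BM(\rho_n)\in\PCurr(G)$, I use compactness of $\PCurr(G)$ to extract a limit $\mu$, which is non-degenerate because the uniform lower bound on $\ell_{d_n}$ keeps the currents away from measures supported on the diagonal. Recovering the length function from the current via its intersection pairing identifies $\mu$ with $BM(\rho_\infty)$ for a metric structure $\rho_\infty$ whose marked length spectrum is $\lim_n \ell_{d_n}$; the dimension-$1$ normalization is preserved under the weak-$*$ limit, so a representative is $\del$-hyperbolic with critical exponent $1$ and $\rho_\infty\in\scrD_\del(G)$, while the current convergence gives $\Del(\rho_n,\rho_\infty)\to 0$. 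Hence the intersection of $\scrD_\del(G)$ with any $\Del$-bounded closed set is compact, which is exactly properness of the inclusion $\scrD_\del(G)\hookrightarrow\scrD(G)$.

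The main obstacle in both parts is this last step: upgrading pointwise convergence of marked length spectra to convergence in $\Del$, since $\Lam$ is a supremum over all of $G$ and pointwise control does not by itself bound that supremum. In case (ii) this is resolved by the uniform Lipschitz estimate from the rough geodesic hypothesis, and in case (i) by continuity of the length pairing under weak-$*$ convergence of currents. A second subtlety, common to both, is to rule out a drop of the critical exponent in the limit; the two-sided comparison with the fixed basepoint $\rho_0$, encoded in the dimension of the limiting conformal measure, is precisely what pins the exponent at $1$ rather than merely bounding it.
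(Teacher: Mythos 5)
Your overall architecture for part (ii) — extract uniformly controlled representatives, pass to a pointwise limit, and upgrade to $\Del$-convergence using the uniform rough geodesic constant — is essentially the paper's route to completeness of $\scrD_{\del,\al}(G)$, but it has a genuine gap at its first step. You assert that the $\al$-rough geodesic condition plus quasi-isometry to $d_S$ makes the family $\{d_n\}$ ``uniformly Lipschitz in $d_S$'' after ``normalizing at the identity.'' This is false for arbitrary representatives: by left-invariance, the Lipschitz constant of $d_n$ over $d_S$ is $\max_{s\in S}d_n(s,1)$, and the bound $c^{-1}\ell_{d_0}\le \ell_{d_n}\le c\,\ell_{d_0}$ controls only translation lengths, not displacements at the identity. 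Replacing $d_n(x,y)$ by $d_n(xz_n,yz_n)$ for $z_n\to\infty$ produces left-invariant, $\del$-hyperbolic, $\al$-rough geodesic representatives of the same metric structure with $\max_{s\in S}d_n(s,1)$ arbitrarily large. One must \emph{recenter} at a point where the joint displacement of $S$ is small, and bounding that infimal joint displacement by the translation lengths $\ell_{d_n}(s_1s_2)$ is a nontrivial theorem — the Bochi-type inequality of Breuillard--Fujiwara (Theorem \ref{thmbrefuj}), which is exactly what Lemma \ref{lem.unifquasiisom} packages. Without this ingredient your Arzel\`a--Ascoli step does not start. (Granting it, your upgrade from pointwise convergence to $\Del$-convergence via the uniform rough geodesic constant is sound; it is the content of Lemma \ref{lem.pointconvimpliesunifconv}, which the paper uses elsewhere, while for properness itself the paper instead proves total boundedness directly by approximating every $\rho\in B$ by one of finitely many word metrics via Lemma \ref{worddense}.)

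Part (i) as proposed does not work. For a general non-elementary hyperbolic group there is no intersection pairing that recovers the marked length spectrum from the Bowen--Margulis current — that is a surface-group (and partially free-group) phenomenon — so you cannot ``identify $\mu$ with $BM(\rho_\infty)$'' or read off a limiting length function from a weak-$*$ limit of currents. Moreover, even if a limit current were of the form $BM(\rho_\infty)$, weak-$*$ convergence in $\PCurr(G)$ is far weaker than convergence in $\Del$: the map $BM$ is injective (Furman) and the paper proves it is \emph{continuous} on $\scrD_\del(G)$, but continuity of its inverse is not available and is essentially what your argument assumes. The paper's actual strategy for (i) is much more economical: it reduces (i) to (ii) by showing (Proposition \ref{boundimpliesroughgeod}) that any $\Del$-bounded subset of $\scrD_\del(G)$ lies in $\scrD_{\del,\al}(G)$ for a single $\al$. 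This is done by embedding each $(G,d_\rho)$ equivariantly into its injective hull (Proposition \ref{prop.injhull}), which is geodesic and still $\del$-hyperbolic, applying Breuillard--Fujiwara there to get representatives uniformly quasi-isometric to $d_S$, and invoking the Morse lemma (Lemma \ref{lem.roughgeodfromhyp}) to convert uniform quasigeodesicity into a uniform rough geodesic constant. You should replace the currents argument with a reduction of this kind.
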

 Recall that a metric space is proper if its closed balls are compact. In particular, $\scrD_\del(G)$ and $\scrD_{\del,\al}(G)$ are always complete subspaces of $\scrD(G)$.
 
The proof Theorem \ref{proper} splits into three parts: the first step is to prove that $\scrD_{\del,\al}(G)$ is complete for any $\del\geq 0$, which requires to construct a limit for a given Cauchy sequence of metric structures. A key point here is to find good representatives for such metric structures, so that we can extract a pointwise convergent subsequence. This is achieved by applying a Bochi-type inequality for subsets of isometries in Gromov hyperbolic spaces due to Breuillard and Fujiwara \cite[Thm.~1.4]{BreuillardFujiwara2021JSRnonpos}. From this inequality we deduce Lemma \ref{lem.unifquasiisom}, our main tool to find good representatives for metric structures. Once a limit pseudometric has been produced, we check that it actually defines a metric structure in $\scrD_{\del,\al}(G)$, which is the desired limit of our Cauchy sequence. 

Our second step is to verify that $\scrD_{\del,\al}(G)$ is proper, for which it is enough to show that any bounded subset $B \subset \scrD_{\del,\al}(G)$ can be covered by finitely many sets of arbitrarily small diameter. To achieve this, we apply \cite[Thm.~1.4]{BreuillardFujiwara2021JSRnonpos} to every metric structure in $B$, so that they can be uniformly approximated by metric structures induced by word metrics (here the $\al$-rough geodesic assumption is crucial). Balls of uniformly small radius around these ``word" metric structures will cover $B$, and indeed, we can cover $B$ with only finitely many of such balls.

Finally, to check that $\scrD_\del(G)$ is proper, we prove that any bounded subset $B\subset \scrD_\del(G)$ is contained in $\scrD_{\del,\al}(G)$ for some $\al$. Again, we want to apply \cite[Thm.~1.4]{BreuillardFujiwara2021JSRnonpos} to $\del$-hyperbolic pseudometrics representing metric structures in $B$, but first we need a $G$-equivariant isometric embedding of $(G,d)$ into a \emph{geodesic} $\del$-hyperbolic space. For this we use the embedding obtained from the injective hull functor \cite{Lang2013injectivehull}, and an application of Morse lemma in the form of Lemma \ref{lem.roughgeodfromhyp} then allows us to find $\del$-hyperbolic pseudometric representatives that are uniformly roughly geodesic.

\subsection{The action of $\Out(G)$ on $\scrD(G)$}
One of the main properties of Teichmüller space and outer space, is that they admit proper isometric actions of $\Mod(S)\cong \Out(\pi_1(S))$ and $\Out(F_k)$ respectively. In our more general setting, there is a natural isometric action of $\Out(G)$ on $(\scrD(G),\Del)$ via pullback: if $\psi\in \Aut(G)$ and $d\in \calD(G)$, then $\psi([d])=[\psi(d)]$, where $\psi(d)(x,y)=d(\psi^{-1}x,\psi^{-1}y)$ for all $x,y\in G$. This action is clearly isometric for the metric $\Del$, and if $\psi\in \Inn(G)$ then $\psi$ acts trivially on $\scrD(G)$, so that the action descends to an isometric action of $\Out(G)$. For this action we prove the following.

\begin{thm}\label{Outmetricproper}
The action of $\Out(G)$ on $\scrD(G)$ is isometric and metrically proper. That is, for any $\rho\in \scrD(G)$ and $R\geq 0$, there exist at most finitely many $\psi\in \Out(G)$ such that $\Del(\rho, \psi(\rho))\leq R.$
\end{thm}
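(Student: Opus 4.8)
The plan is to show that proper metric balls in $\scrD(G)$ intersect any orbit $\Out(G)\cdot\rho$ in only finitely many points. The key observation is that the marked length spectrum $\ell_d$ (and in particular its values on conjugacy classes of $G$) controls the metric structure, and that $\psi\in\Aut(G)$ acts on length spectra by precomposition with $\psi^{-1}$. So I would translate the condition $\Del(\rho,\psi(\rho))\leq R$ into a uniform two-sided comparison of marked length spectra, and then argue that only finitely many outer automorphisms can satisfy such a comparison.

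First I would fix a representative $d\in\rho$ and recall from the definition of $\Lam$ that $\Del(\rho,\psi(\rho))\leq R$ is equivalent to the existence of constants $\lam_1,\lam_2$ with $\lam_1\lam_2\leq e^{R}$ and $A\geq 0$ such that $\tfrac{1}{\lam_1}\psi(d)-A\leq d\leq \lam_2\,\psi(d)+A$. Passing to marked length spectra (which is the scaling-invariant, additive-constant-insensitive invariant; this is where I invoke Proposition \ref{Lambda=DilDil}), this gives
\[
\tfrac{1}{\lam_1}\,\ell_d(\psi^{-1}x)\;\leq\;\ell_d(x)\;\leq\;\lam_2\,\ell_d(\psi^{-1}x)
\quad\text{for all }x\in G,
\]
with $\lam_1\lam_2\leq e^R$. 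Thus each $\psi$ satisfying the orbit condition gives a bijection of conjugacy classes that distorts the $\ell_d$-values by a bounded multiplicative factor.

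The next step, which I expect to be the heart of the argument, is to promote this length-spectrum control to control on the images of a fixed finite generating set. Since $d$ is quasi-isometric to a word metric, the function $\ell_d$ is comparable to stable word length, and in particular $\{x\in G : \ell_d(x)\leq L\}$ contains only finitely many conjugacy classes for each $L$ (an exponential-growth / properness input). Fixing a symmetric generating set $S$, the generators have $\ell_d(s)\leq C$ for some uniform $C$, so the condition above forces $\ell_d(\psi^{-1}(s))\leq \lam_1 C \leq e^R C$ for every $s\in S$. Hence $\psi^{-1}(S)$ lands in a fixed finite union of conjugacy classes; equivalently, each generator's image has bounded stable length, and by the quasi-isometry, bounded word length up to conjugacy. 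The crux is that an automorphism is determined, up to finitely many choices, by the conjugacy classes of the images of a finite generating set: in a hyperbolic group there are only finitely many homomorphisms $G\to G$ sending each generator into a prescribed bounded set of elements (using that $G$ is finitely presented and that there are finitely many elements of bounded word length), and passing to $\Out(G)$ absorbs the conjugation ambiguity. Therefore only finitely many classes $\psi\in\Out(G)$ can satisfy $\Del(\rho,\psi(\rho))\leq R$.

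The main obstacle is the passage from ``bounded length spectrum on generators'' to ``finitely many outer automorphisms,'' because $\ell_d$ is a \emph{stable} length and only controls elements \emph{up to conjugacy}, not on the nose; I must be careful that the conjugating elements can be chosen coherently so that a genuine representative automorphism (not just a self-map of conjugacy classes) is pinned down. The clean way around this is to work directly in $\Out(G)$ and count: a candidate $\psi\in\Out(G)$ is specified by the tuple of conjugacy classes $([\psi^{-1}(s)])_{s\in S}$, there are finitely many such tuples available under the length bound, and since $G$ is Hopfian (being hyperbolic, hence residually finite and finitely generated) distinct outer automorphisms cannot induce the same tuple of conjugacy classes arbitrarily often — more precisely, the map $\Out(G)\to \prod_{s\in S}(\text{conjugacy classes})$ has finite fibers, which follows from the finiteness of the automorphism-orbit data together with finite generation. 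Assembling these finiteness inputs yields the claimed metric properness.
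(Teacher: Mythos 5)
Your reduction of $\Del(\rho,\psi(\rho))\leq R$ to the two-sided bound $e^{-R}\ell_d(x)\leq \ell_d(\psi^{-1}x)\leq e^R\ell_d(x)$ (after normalizing critical exponents, via Proposition \ref{Lambda=DilDil} and Lemma \ref{normalizedcriticalexponent}) is correct and matches the paper's starting point. The gap is in your final counting step: the claim that the map $\Out(G)\to \prod_{s\in S}(\text{conjugacy classes})$, $\psi\mapsto ([\psi^{-1}(s)])_{s\in S}$, has finite fibers is false in general. In $F_3=\langle a,b,c\rangle$ the partial conjugations $\psi_n\colon a\mapsto a,\ b\mapsto b,\ c\mapsto a^{-n}ca^{n}$ are pairwise distinct in $\Out(F_3)$ yet all induce the same tuple of conjugacy classes on the generators; Hopficity is irrelevant to this (and residual finiteness of general hyperbolic groups, which you invoke in passing, is a well-known open problem). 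Knowing each $\psi^{-1}(s)$ only up to conjugacy, with independent conjugators for different $s$, genuinely does not pin down $\psi$ up to finitely many choices. (Of course the $\psi_n$ above badly distort $\ell_d$ on \emph{products} of generators, so they do not contradict the theorem --- but your argument never uses anything beyond the generators' conjugacy classes, so it cannot rule them out.)

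The missing ingredient is a way to choose the conjugator \emph{coherently}: one needs a single point $x\in G$ with $\max_{s\in S}d(x,\psi^{-1}(s)x)$ bounded independently of $\psi$, for then the tuple $(x^{-1}\psi^{-1}(s)x)_{s\in S}$ lies in the finite set $B_d(M)^S$ and determines $\psi$ up to inner automorphism (this is Paulin's counting argument, which is exactly why $\lam_\psi=\inf_x\max_s d(x,\psi^{-1}(s)x)$ is unbounded over infinitely many distinct classes in $\Out(G)$). The paper obtains the required uniform bound on $\lam_\psi$ from the Breuillard--Fujiwara inequality (Theorem \ref{thmbrefuj}): $\lam_\psi\leq K(\del,\al)+\tfrac12\max_{s_1,s_2\in S}\ell_{\psi(d)}(s_1s_2)\leq K+\tfrac{e^R}{2}\max_{s_1,s_2}\ell_d(s_1s_2)$, where the last step uses precisely the length-spectrum comparison you derived --- applied to the \emph{products} $s_1s_2$, not just to the generators. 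So your setup is salvageable, but the passage from translation lengths to an actual displacement bound at a common basepoint is the essential step you are missing, and it is not a routine finiteness observation.
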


The proof of Theorem \ref{Outmetricproper} also uses Breuillard-Fujiwara's inequality, combined with an argument of Paulin in his proof that hyperbolic groups with infinite outer automorphism group act non-trivially on $\R$-trees \cite{Paulin1991OuthypRtrees}.

The situation is also interesting when we restrict to the $\Out(G)$-invariant subspaces $\scrD_\del(G)$ and $\scrD_{\del,\al}(G)$ for $\del,\al\geq 0$, since by Theorem \ref{proper}, metric properness of the action implies proper discontinuity. When $G$ is torsion-free, we can apply a finiteness theorem for hyperbolic groups recently obtained by Besson-Courtois-Gallot-Sambusetti \cite{BessonCourtoisGallotSambusetti2021Finitenesshyp}, and actually prove cocompactness for the action on $\scrD_{\del,\al}(G)$.
\begin{thm}\label{cocompactness}
Assume $G$ is torsion-free, and let $\al, \del\geq 0$ be such that $\scrD_{\del,\al}(G)$ is non-empty. Then the action of $\Out(G)$ on $\scrD_{\del,\al}(G)$ is cocompact. In particular, if $\Out(G)$ is finite then $\scrD_{\del,\al}(G)$ is compact.
\end{thm}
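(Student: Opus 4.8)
The plan is to prove that the quotient $\scrD_{\del,\al}(G)/\Out(G)$ is compact. Since $\scrD_{\del,\al}(G)$ is proper by Theorem~\ref{proper} and $\Out(G)$ acts by isometries, it suffices to exhibit a compact set $K\subset\scrD_{\del,\al}(G)$ meeting every orbit, and for this it is enough to find a constant $D>0$ together with finitely many metric structures $\rho_1,\dots,\rho_k$ so that every $\rho\in\scrD_{\del,\al}(G)$ satisfies $\Del(\rho,\psi(\rho_i))\le D$ for some index $i$ and some $\psi\in\Out(G)$. Indeed, letting $K=\{\rho\in\scrD_{\del,\al}(G):\Del(\rho,\rho_i)\le D\text{ for some }i\}$, this set is closed and bounded in the proper space $\scrD_{\del,\al}(G)$, hence compact, and the covering property together with the $\Out(G)$-invariance of $\scrD_{\del,\al}(G)$ gives $\psi^{-1}(\rho)\in K$, so that $\Out(G)\cdot K=\scrD_{\del,\al}(G)$.

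The first step is to reduce to word metrics. Applying the Breuillard--Fujiwara inequality \cite[Thm.~1.4]{BreuillardFujiwara2021JSRnonpos} through Lemma~\ref{lem.unifquasiisom}, exactly as in the proof of Theorem~\ref{proper}, I would produce constants $N$ and $D_0$ depending only on $\del$ and $\al$ with the following property: for each $\rho=[d]\in\scrD_{\del,\al}(G)$ there is a generating set $S=S_\rho$ of $G$ (for instance a $d$-ball of uniformly bounded radius) with $|S|\le N$ and $\Del(\rho,[d_S])\le D_0$. Because $\rho$ is represented by a pseudometric of critical exponent $1$ and $\Del(\rho,[d_S])\le D_0$, the comparison constants relating $d$ and $d_S$ are bounded above and below, so the entropy $h(G,S)$ of each occurring word metric lies in a fixed compact subinterval of $(0,\infty)$, and $(G,d_S)$ is $\del'$-hyperbolic for a uniform $\del'$.

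The second step invokes the finiteness theorem of Besson--Courtois--Gallot--Sambusetti \cite{BessonCourtoisGallotSambusetti2021Finitenesshyp}, and this is where torsion-freeness is used. The generating sets $S$ produced above are uniformly controlled: $|S|\le N$, $(G,d_S)$ is $\del'$-hyperbolic, and $h(G,S)$ ranges in a compact interval. By the BCGS finiteness theorem, these generating sets fall into only finitely many orbits under $\Aut(G)$; equivalently, the marked groups $(G,S)$ realize only finitely many isomorphism types. Choosing representatives $S_1,\dots,S_k$ and setting $\rho_i=[d_{S_i}]$, any $S_\rho$ equals $\psi(S_i)$ for some $i$ and some $\psi\in\Aut(G)$, whence $[d_{S_\rho}]=\psi(\rho_i)$ and $\Del(\rho,\psi(\rho_i))\le D_0$. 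This establishes the covering property with $D=D_0$ and hence cocompactness. Finally, if $\Out(G)$ is finite then $\scrD_{\del,\al}(G)=\Out(G)\cdot K$ is a finite union of compact sets, so $\scrD_{\del,\al}(G)$ itself is compact.

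I expect the main obstacle to be the second step: one must check that the word metrics manufactured in the first step satisfy precisely the hypotheses required to feed into the BCGS finiteness theorem, that is, a uniform bound on the number of generators, a uniform hyperbolicity constant, and entropy confined to a compact interval, and then extract from BCGS the finiteness of $\Aut(G)$-orbits of generating sets of the fixed group $G$ rather than merely a count of abstract isomorphism types. This is also the only point where the torsion-free hypothesis is genuinely needed, matching the statement of the theorem.
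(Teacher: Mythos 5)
Your overall strategy coincides with the paper's: approximate each $\rho\in\scrD_{\del,\al}(G)$ by a word metric at uniformly bounded distance, invoke the Besson--Courtois--Gallot--Sambusetti finiteness theorem to reduce to finitely many marked groups up to automorphism, and use properness (Proposition \ref{propersecond}) to produce a compact set meeting every orbit. The conclusion and the role of torsion-freeness are exactly as in the paper.

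There is, however, one step whose justification as written does not go through. You propose to obtain the uniform constants $N$ and $D_0$ by ``applying Lemma \ref{lem.unifquasiisom} exactly as in the proof of Theorem \ref{proper}.'' That lemma only applies to a \emph{bounded} subset $B\subset\scrD_{\del,\al}(G)$, and its constant $C$ depends on $B$; here you need a statement valid on all of $\scrD_{\del,\al}(G)$, which need not be bounded (indeed $\scrD(G)$ itself is unbounded by Theorem \ref{unboundedcontractible}). The correct tool --- which your parenthetical ``for instance a $d$-ball of uniformly bounded radius'' already anticipates --- is Lemma \ref{worddense} alone: taking $S_\rho=B_{d_\rho}(2+\lfloor\al\rfloor)$ for a $\del$-hyperbolic, $\al$-rough geodesic $d_\rho\in\hat\rho$, that lemma gives $\Del(\rho,[d_{S_\rho}])\le\log\bigl(\tfrac{2+\lfloor\al\rfloor}{1+\lfloor\al\rfloor-\al}\bigr)$ with no appeal to Breuillard--Fujiwara and no boundedness hypothesis, and it also yields $h(G,S_\rho)\le 2+\lfloor\al\rfloor$ and uniform hyperbolicity of $(G,S_\rho)$ via the Morse lemma. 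Note also that your insistence on a uniform bound $|S|\le N$ on the cardinality of the generating sets is not needed: Theorem \ref{thmBCGS} as stated requires only a uniform hyperbolicity constant and an entropy bound, so the concern you flag at the end about feeding the right hypotheses into BCGS dissolves. With the first step repaired in this way, your argument is the paper's argument.
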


\subsection{Geodesic currents and the Bowen-Margulis map} As $G$ is non-elementary, its boundary at infinity $\partial G$ is naturally a compact metrizable space equipped with a topological action of $G$. A \emph{geodesic current} of $G$ is a $G$-invariant Radon measure on $\partial^2G$, the set of pairs of distinct elements of $\partial G$ considered with the diagonal action. Let $\Curr(G)$ denote the space of geodesic currents of $G$ (some authors also require currents to be flip-invariant, but we will not assume this). 

The space $\Curr(G)$ is considered with the weak-$*$ topology, so that a sequence $(m_n)_n$ of geodesic currents converges to the geodesic current $m$ if and only if $\int{fdm_n} \to \int{fdm}$ for every compactly supported continuous function $f\in C_c(\partial ^2 G)$. We will also consider the quotient space $\bbP\Curr(G):=(\Curr(G)\bs \corchete{0})/\R^+$ of \emph{projective geodesic currents}, endowed with the quotient topology, which makes it into a compact metrizable space \cite[Prop.~6]{Bonahon1991currentsnegcurv}.

Geodesic currents were introduced by Bonahon \cite{Bonahon1988GeomTeichcurrents}, and were used to give a new interpretation of Thurston's compactification for Teichmüller spaces. In case $G$ is the fundamental group of a closed negatively curved manifold $M$, $\partial ^2 G$ is naturally homeomorphic to set of unparametrized oriented geodesics in the universal cover $\wtilde{M}$, and geodesic currents can be described as Borel measures on the unit tangent bundle of $M$ that are invariant for the geodesic flow \cite{Kaimanoch1990Invmeasgeodesicflow}.  

Given $\rho \in \scrD(G)$, we can construct a projective geodesic current $BM(\rho)\in \bbP\Curr(G)$ as follows. If $\rho=[d]$ and $\nu$ is a  quasiconformal measure for $d$, there exists a geodesic current $m$ in the same measure class of $\nu \otimes \nu $, and any other geodesic current in this measure class is a positive multiple of $m$. This induces a projective geodesic current, which turns out to be independent of the choices of $d$ and $\nu$ (see \cite[Prop.~3.1]{Furman2002coarsenegcurv}). 
This is the \emph{Bowen-Margulis current} of the metric structure $\rho$.

In \cite[Thm.~4.1]{Furman2002coarsenegcurv}, Furman proved that the Bowen-Margulis map $BM:\scrD(G)\ra \bbP\Curr(G)$ is injective for any non-elementary hyperbolic group $G$. If $G=\pi_1(S)$ is a surface group and $\rho \in \scrT(S)$, the Bowen-Margulis current $BM(\rho)$ has a canonical normalization in $\Curr (G)$, which coincides with the Liouville current of $\rho$. The work of Bonahon \cite{Bonahon1988GeomTeichcurrents} then implies that the Bowen-Margulis map $BM:\scrT(S) \ra \PCurr (G)$ is continuous. Similar continuity results for the Bowen-Margulis current still hold if we consider variable negative curved metrics on surfaces \cite{LatokKnieperPollicottWeiss1989DiffentropyAnosov} (indeed, sufficiently regular perturbations of the metric will give higher regularity of the Bowen-Margulis current). In the case of a free group $F_k$, the continuity of $BM: \scrC\scrV(F_k) \ra \bbP\Curr (F_k)$ is due to Kapovich and Nagnibeda \cite[Thm.~A]{KapovicNagnibeda2007PSembedding+minvol}. Our next theorem generalizes all these previous continuity results.
\begin{thm}\label{BMctns}
For any $\del\geq 0$ such that $\scrD_\del(G)$ is non-empty, the Bowen-Margulis map $BM:\scrD_\del(G) \ra \PCurr(G)$ is continuous. 
\end{thm}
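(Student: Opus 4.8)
The plan is to prove sequential continuity, which suffices since $\scrD_\del(G)$ and $\PCurr(G)$ are metrizable, the latter moreover compact. Thus, fixing $\rho_n\to\rho$ in $\scrD_\del(G)$, it is enough to show that every subsequence admits a further subsequence along which $BM(\rho_{n_k})\to BM(\rho)$; equivalently, after relabelling I may assume $BM(\rho_n)\to\mu$ in $\PCurr(G)$ and aim for $\mu=BM(\rho)$. To gain compactness of representatives I would invoke properness of $\scrD_\del(G)$ (Theorem~\ref{proper}): combining Lemma~\ref{lem.unifquasiisom} with the completeness machinery in its proof, I choose $\del$-hyperbolic representatives $d_n\in\rho_n$ and $d\in\rho$, each of critical exponent $1$, with $d_n\to d$ pointwise on $G\times G$ along a further subsequence. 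Since every pseudometric in $\calD(G)$ is quasi-isometric to a fixed word metric, all of them induce the \emph{same} boundary $\partial G$ and double boundary $\partial^2 G$, so the quasiconformal measures and Bowen--Margulis currents all live on one fixed space, which is what makes the comparison meaningful.

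Next I would recall the explicit shape of the current. Normalizing a quasiconformal measure $\nu$ for $d$ to be a probability measure on $\partial G$, the critical exponent being $1$ forces the current in the class of $\nu\otimes\nu$ to be $dm(\xi,\eta)=e^{2(\xi|\eta)_{d}}\,d\nu(\xi)\,d\nu(\eta)$, the Gromov-product weight being exactly what restores $G$-invariance through the $d$-conformal transformation rule for $\nu$. The theorem therefore reduces to two convergence statements: the weights converge, $e^{2(\xi|\eta)_{d_n}}\to e^{2(\xi|\eta)_{d}}$ locally uniformly on $\partial^2 G$, and the measures converge, $\nu_n\to\nu$ weakly on $\partial G$. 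Granting both, the weighted product measures $e^{2(\xi|\eta)_{d_n}}\,d\nu_n\otimes d\nu_n$ converge weakly, and projectivizing gives $\mu=BM(\rho)$.

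For the weights, pointwise convergence $d_n\to d$ yields convergence of the Gromov products $(x|y)_{d_n}\to(x|y)_{d}$ on $G$; the uniform $\del$-hyperbolicity and the uniform quasi-isometry constants supplied by Lemma~\ref{lem.unifquasiisom} then upgrade this to convergence of the boundary-extended Gromov products locally uniformly on $\partial^2 G$, since away from the diagonal these products are bounded and the hyperbolicity inequalities furnish the required equicontinuity. For the measures, I would first extract a weak limit $\nu_n\to\nu_\infty$ by compactness of the probability measures on $\partial G$, and then identify $\nu_\infty$ as a quasiconformal measure for $d$: the Coornaert shadow estimates $\nu_n(\calO(x))\asymp e^{-d_n(1,x)}$ pass to the limit, and the conformal transformation rules converge because the underlying Busemann cocycles do, so $\nu_\infty$ satisfies the transformation rule for $d$. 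By uniqueness of the quasiconformal measure class (Coornaert, and Bader--Furman ergodicity), $\nu_\infty$ is in the correct class and may be taken as $\nu$.

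I expect the main obstacle to be exactly the convergence of the quasiconformal measures, and within it the step identifying the weak accumulation point $\nu_\infty$ as genuinely quasiconformal for $d$ rather than a spurious limit. The difficulty is that these measures are canonical only up to measure class and bounded multiplicative error, so the shadow estimates and transformation rules hold only up to multiplicative constants; controlling those constants uniformly in $n$ is precisely where the Bochi-type inequality of Breuillard--Fujiwara and the uniform representatives of Lemma~\ref{lem.unifquasiisom} become indispensable.
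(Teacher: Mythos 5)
Your overall architecture---reduction to subsequences, uniform $\del$-hyperbolic, rough-geodesic representatives via Lemma~\ref{lem.unifquasiisom}, pointwise convergence $d_n\to d$, weak-$*$ limits of uniformly quasiconformal measures, and identification of the limit measure as quasiconformal for $d$---is the same as the paper's. But your final identification step has a genuine gap, in two places. First, the claim that $e^{2\groprod{\xi}{\eta}{1,d_n}}\to e^{2\groprod{\xi}{\eta}{1,d}}$ locally uniformly on $\partial^2G$ is not available: pointwise convergence of $d_n$ on $G$, upgraded by Lemma~\ref{lem.pointconvimpliesunifconv} and Proposition~\ref{strongqi}, only yields
$\Lam_n^{-1}\groprod{p}{q}{1,d}-Q\leq\groprod{p}{q}{1,d_n}\leq\Lam_n\groprod{p}{q}{1,d}+Q$
with $\Lam_n\to1$ but $Q$ a \emph{fixed} additive constant that does not tend to $0$ (it absorbs the additive error of order $\al$ in the metric comparison, which is intrinsic to the rough-geodesic setting). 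Moreover the boundary-extended Gromov product is only semicontinuous, which is why the paper must work with semicontinuous envelopes (Lemma~\ref{approxgroprodbyctns}) and monotone continuous approximations before exchanging limits with the weak-$*$ convergence of the $\nu_n$. So the weights do not converge; they are only uniformly comparable.

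Second, and more seriously, even granting all your convergences, the conclusion ``projectivizing gives $\mu=BM(\rho)$'' does not follow, because every identification here holds only up to a uniformly bounded multiplicative error: $e^{2\groprod{p}{q}{1,d_n}}d\nu_n\,d\nu_n$ is not itself $G$-invariant, hence not a current, and is only comparable within a uniform constant $R$ to a genuine representative $m_n$ of $BM(\rho_n)$; the same is true in the limit. What this line of argument actually produces is that the weak-$*$ limit $\om$ of the $m_n$ and a representative $m_\infty$ of $BM(\rho_\infty)$ are mutually absolutely continuous with Radon--Nikodym derivative bounded above and below by positive constants---which is strictly weaker than equality in $\PCurr(G)$, since that requires $\om=\lam m_\infty$ exactly. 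The paper closes this gap by noting that $\om$ is $G$-invariant (as a limit of $G$-invariant measures) while $m_\infty$ is $G$-ergodic by Bader--Furman, so $\frac{d\om}{dm_\infty}$ is almost everywhere constant. This ergodicity step is indispensable and is absent from your proposal; without it, ``equivalent with bounded derivatives'' cannot be upgraded to ``equal in $\PCurr(G)$.''
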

The tools involved in the proof of Theorem \ref{BMctns} are similar to the ones used in the proof of Theorem \ref{proper}. If $(\rho_n)_n \subset \scrD_\del(G)$ is a sequence of metric structures converging to $\rho_\infty$, $\del$-hyperbolicity together with Breuillard-Fujiwara's inequality (in the form of Lemma \ref{lem.unifquasiisom}) allow us to find convenient pseudometric representatives, so that for each $n$ we can find a well-controlled quasiconformal measure. We prove that any limit of a convergent subsequence of such measures is quasiconformal for $\rho_\infty$, which allows us to find well-controlled Radon measures in $\Curr(G)$ representing $BM(\rho_n)$ and convergent (after taking a subsequence) to a measure representing $BM(\rho_\infty)$.

\subsection{Mean distortion} Given two pseudometrics $d, d'$ in $\calD(G)$, the \emph{mean distortion} of $d'$ over $d$ is the quantity
\begin{equation}\label{meandist}
    \tau(d'/d)=\lim_{r\to \infty}{\frac{1}{|\corchete{x\in G \colon d(x,1)\leq r}|}{\sum_{d(x,1)\leq r}{\frac{d'(x,1)}{r}}}}.
\end{equation}
This limit is well-defined \cite[Thm.~1.2]{CantrellTanaka2021Manhattan}, finite, and positive. The mean distortion has been studied in the case that $d$ and $d'$ are word metrics \cite{CalegariFujiwara2010Combable}, and appears in the study of automorphisms of hyperbolic groups as the generic stretching factor \cite{KaimanovichKapovichSchupp2007Genstretchingfactor}. 

The mean distortion is also related to the Hausdorff dimension of harmonic measures. If $d\in \calD(G)$, $\mu$ is a finitely supported probability measure on $G$ with symmetric support that generates $G$, and $d_{\mu}$ is its corresponding Green metric, then
$$1/\tau(d/d_{\mu})=h_{\mu}/l_\mu,$$
where $h_{\mu}$ is the entropy of $\mu$ and $l_\mu$ is the drift of $\mu$ with respect to $d$. The identity follows since both sides coincide with the Hausdorff dimension of the harmonic measure for $\mu$ with respect to a visual quasi-metric for $d$ on $\partial G$, see e.g. \cite[Cor.~1.4]{BlachereHaissinskyMathieu2011HarmvsQconf} and \cite[Thm.~3.8]{CantrellTanaka2021Manhattan}.

Given metric structures $\rho,\rho'\in \scrD(G)$, we define the \emph{mean distortion} of $\rho'$ over $\rho$ by
$$\tau(\rho'/\rho)=\tau(d'/d),$$
where $d\in \rho$ and $d'\in \rho$ have critical exponent 1.
It follows from \cite[Thm.~1.2]{CantrellTanaka2021Manhattan} that $\tau(\rho'/\rho)\geq 1$, with equality if and only if $\rho=\rho'$. It is clear from \eqref{meandist} that $\tau(\rho'/\rho)$ is continuous in the variable $\rho'$, and the next theorem states that it is actually continuous in both variables.
\begin{thm}\label{meandistctns}
The mean distortion $\tau:\scrD(G)\times \scrD(G) \ra \R$ is continuous. 
\end{thm}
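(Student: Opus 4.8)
The plan is to reduce the statement to the convergence of a one-parameter family of \emph{Manhattan curves} and then to control these curves at a single boundary point. Fix a base point $(\rho_\infty,\rho'_\infty)$ and a sequence $(\rho_n,\rho'_n)\to(\rho_\infty,\rho'_\infty)$ in $(\scrD(G),\Del)\times(\scrD(G),\Del)$; I must show $\tau(\rho'_n/\rho_n)\to\tau(\rho'_\infty/\rho_\infty)$. First I would dispose of the numerator variable by the elementary estimate implicit in the paper's remark that $\tau$ is continuous in $\rho'$. If $d'_1,d'_2$ are critical-exponent-$1$ representatives with $\tfrac{1}{\lambda}d'_2-A\le d'_1\le \lambda d'_2+A$, then dividing by $r$, summing over the fixed $d$-ball $\{x: d(x,1)\le r\}$, and letting $r\to\infty$ so that the term $A/r$ disappears gives $\tfrac{1}{\lambda}\tau(\rho'_2/\rho)\le \tau(\rho'_1/\rho)\le \lambda\,\tau(\rho'_2/\rho)$, with a bound depending only on $\lambda$ and not on $\rho$. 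Since $\Del(\rho'_n,\rho'_\infty)\to 0$ permits $\lambda\to 1$, this yields continuity in the second variable, uniformly in the first, and reduces the theorem to continuity in $\rho$ alone, i.e.\ to $\tau(\rho'_\infty/\rho_n)\to\tau(\rho'_\infty/\rho_\infty)$.

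For the core, fix $d'\in\rho'_\infty$ and critical-exponent-$1$ representatives $d_n\in\rho_n$, $d_\infty\in\rho_\infty$. I would invoke the Manhattan curve of \cite{CantrellTanaka2021Manhattan}: the boundary $\theta_n=\theta_{d_n,d'}$ of the convergence region $\{(a,b):\sum_{x\in G}e^{-a\,d_n(x,1)-b\,d'(x,1)}<\infty\}$ is a convex decreasing function, and the circle of ideas behind \cite[Thm.~1.2]{CantrellTanaka2021Manhattan} identifies the mean distortion with its boundary slope, $\tau(\rho'_\infty/\rho_n)=-\theta_n'(1)$ (consistent with the line $\theta(a)=1-a$, slope $-1$, and $\tau=1$ in the rough-similarity case). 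With $\theta_\infty=\theta_{d_\infty,d'}$ defined likewise, continuity becomes the statement $\theta_n'(1)\to\theta_\infty'(1)$.

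Two observations make this tractable. First, the critical-exponent-$1$ normalization forces the comparison constants to converge to $1$ individually: if $\tfrac{1}{\lambda_1}d_\infty-A\le d_n\le \lambda_2 d_\infty+A$, then comparing critical exponents (all equal to $1$) gives $\lambda_1,\lambda_2\ge 1$, so $\lambda_1\lambda_2\to\Lam(\rho_n,\rho_\infty)\to 1$ forces $\lambda_1,\lambda_2\to 1$. Substituting these one-sided comparisons into the Poincaré series sandwiches $\sum_x e^{-a d_n-b d'}$ between two copies of $\sum_x e^{-\tilde a d_\infty-b d'}$ with $\tilde a/a\to 1$, and comparing abscissas of convergence yields $\theta_n\to\theta_\infty$ locally uniformly near $a=1$. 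Second, since $d'$ is quasi-isometric to $d_\infty$, the point $a=1$ lies in the \emph{interior} of the common domain of the $\theta_n$ (for $a>1$ the series $\sum_x e^{-a d_n(x,1)}$ still converges, so $\theta_n(a)<0$ there), whence $\theta_\infty$ is genuinely differentiable at $a=1$, its derivative being $-\tau(\rho'_\infty/\rho_\infty)$. The conclusion then follows from the standard fact that a locally uniformly convergent sequence of convex functions has convergent derivatives at every interior point where the limit is differentiable, giving $\theta_n'(1)\to\theta_\infty'(1)$, and hence the theorem.

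The main obstacle is exactly the passage to the generating-function picture, and it is forced: one cannot compare the two mean distortions ball-by-ball, because even when $\Del(\rho_n,\rho_\infty)$ is small the balls $\{d_n(\cdot,1)\le r\}$ and $\{d_\infty(\cdot,1)\le r\}$ differ by a multiplicative distortion of their radii, and since they grow like $e^{r}$ this produces an exponential discrepancy of order $e^{r(\lambda-1/\lambda)}$ in their cardinalities at a fixed radius, so the naive squeeze diverges. Passing to $\sum_x e^{-a d_n-b d'}$ converts this multiplicative radius-distortion into a mere rescaling of the parameters $(a,b)$, which tends to the identity as $\lambda\to 1$; this is what makes the comparison clean. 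The remaining delicate point is the interchange of the limits $n\to\infty$ and $h\to 0$ in the difference quotients at $a=1$, which is why the interiority of $a=1$ (so that $\theta_\infty$ is differentiable and not merely one-sidedly so) together with convexity of all the $\theta_n$ are both indispensable.
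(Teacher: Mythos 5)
Your argument is essentially the paper's: both proofs run through the Manhattan curve, sandwich the Poincar\'e series of the perturbed pair between reparametrized copies of the limiting one using quasi-isometry constants that the critical-exponent normalization forces individually to $1$, and then use convexity to upgrade pointwise convergence of the curves to convergence of derivatives at a point where the limit curve is differentiable. Two remarks. First, your preliminary reduction to continuity in one variable is valid (the ball-average estimate does give $\lambda^{-1}\tau(\rho'_2/\rho)\le\tau(\rho'_1/\rho)\le\lambda\,\tau(\rho'_2/\rho)$ uniformly in $\rho$, and local boundedness of $\tau$ converts this into a uniform modulus), but it is unnecessary: the same exponential sandwich applies simultaneously to both coordinates of the exponent, so the paper handles $\rho_n\to\rho$ and $\rho'_n\to\rho'$ in one stroke. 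Second, with your parametrization of the convergence region ($a$ multiplying $d_n$, $b$ multiplying $d'$, and $\theta_n(a)$ the critical exponent in $b$), the boundary passes through $(1,0)$ with normal proportional to $(1,\tau(d'/d_n))$, so $-\theta_n'(1)=1/\tau(\rho'_\infty/\rho_n)$ rather than $\tau(\rho'_\infty/\rho_n)$; the identity $\tau=-\Theta'(0)$ holds for the transposed parametrization used in the paper, where one takes the critical exponent in the coefficient of $d_n$ as a function of the coefficient of $d'$ and differentiates at $0$. Your sanity check $\theta(a)=1-a$ cannot detect this because there $\tau=1/\tau=1$. The slip is harmless for the theorem, since $\tau\ge 1$ makes convergence of $1/\tau(\rho'_\infty/\rho_n)$ equivalent to convergence of $\tau(\rho'_\infty/\rho_n)$, but the slope identity as you state it is off by a reciprocal.
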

The proof of this result is short, and will follow from the work of Cantrell and Tanaka \cite{CantrellTanaka2021Manhattan}, where they describe the mean distortion of $d'$ over $d$ in terms of the derivative of the Manhattan curve of the pair $(d,d')$.


\paragraph{\textbf{Organization of the paper.}} Section \ref{secprelim} presents the main properties of Gromov hyperbolic spaces and groups that we use throughout the paper. In section \ref{secMLS} we prove Proposition \ref{Lambda=DilDil}, which characterizes the distance $\Del$ in terms of the marked length spectra of the corresponding metric structures. The main tool is Proposition \ref{thmDGLM}, a refinement of a result by Delzant-Guichard-Labourie-Mozes. The proof of Theorem \ref{unboundedcontractible}, split into Propositions \ref{contractible} and \ref{unbounded}, is given in Section \ref{seccontractibleunbounded}. In Section \ref{secproperness} we prove Theorem \ref{proper}, divided into Propositions  \ref{complete}, \ref{propersecond} and \ref{boundimpliesroughgeod}. For this we require Lemma \ref{worddense}, which allows us to approximate pseudometrics in $\calD(G)$ by word metrics, and Lemma \ref{lem.unifquasiisom}, which is used to find good pseudometric representatives for metric structures. This last lemma is consequence Breuillard-Fujiwara's inequality, stated as Theorem \ref{thmbrefuj}. The action of $\Out(G)$ on $\scrD(G)$ is studied in Section \ref{secOutG}, where we prove Theorems \ref{Outmetricproper} and \ref{cocompactness}. This last theorem uses Theorem \ref{thmBCGS}, a particular case of the finiteness theorem of Besson-Courtois-Gallot-Sambusetti. We end with Section \ref{secBM} where we prove Theorems \ref{BMctns} and \ref{meandistctns}. 

\paragraph{\textbf{Convention.}}
Throughout the paper, we will deal with properties and arguments regarding pseudometric spaces. Besides most of our cited references are statements about metric spaces, they also hold for pseudometric spaces, either by considering the metric identifications, or by approximating pseudometrics by appropriate metrics. For this reason, most of the cited references will be stated in terms of pseudometric spaces without further explanation.


\section{Preliminaries}\label{secprelim}
We start with the basic notions about Gromov hyperbolic spaces and groups, standard references being \cite{BridsonHaefliger1999,GhydelaHarpe1990book}.
Our goal here is twofold: we want to standardize notation, as well as collect known results that will be useful to us given their explicit dependence on parameters such as the hyperbolicity constant, the rough geodesic constant, etc.  The reader familiar with these topics might want to skip to the next section.  

\subsection{Gromov hyperbolicity and quasi-isometries}
Let $(X,d)$ be a pseudometric space and $w\in X$ a base point. The \emph{Gromov product} on $X$ is given by
\begin{equation}
2\groprod{x}{y}{w,d}=d(x,w)+d(y,w)-d(x,y). \notag
\end{equation}
If there is no ambiguity in the pseudometric $d$, we just write $\groprod{x}{y}{w}$ instead of $\groprod{x}{y}{w,d}$.
\begin{defi}\label{defhyp}
The pseudometric space $(X,d)$ is $\delta$-\textit{hyperbolic} ($\del \geq 0$) if for all $x,y,z,w\in X$ the following inequality is satisfied:
\begin{equation}\label{gromovin}
\groprod{x}{z}{w}\geq \min\left(\groprod{x}{y}{w},\groprod{y}{z}{w}\right)-\delta. 
\end{equation}
A pseudometric space is \emph{hyperbolic} if it is $\del$-hyperbolic for some $\del\geq 0$.
\end{defi}
Recall that a pseudometric space is \emph{geodesic} if every two points can be joined by an arc isometric to the interval of length equal to the distance between the two points. In general, we will not require pseudometric spaces to be geodesic. However, most of our pseudometrics will satisfy a weaker (but still useful enough) property:
\begin{defi}\label{defroughgeod}
The pseudometric space $(X,d)$ is $\al$-\emph{rough geodesic} ($\al\geq 0$) if for any $x,y\in X$ there is a sequence of points $x=x_0,\dots,x_n=y$ in $X$ such that for all $0\leq i,j\leq n$:
\begin{equation*}
    |j-i|-\al \leq d(x_j,x_i)\leq |j-i|+\al.
\end{equation*}
\end{defi}

Given constants $\lambda_1,\lambda_2>0$ and $\epsilon \geq 0$, a map $F:X\ra Y$ is a $(\lambda_1,\lambda_2,\epsilon)$-\emph{quasi-isometric embedding} if for any $x,y\in X$ we have
\begin{equation*}
\frac{1}{\lambda_1}d_X(x,y)-\epsilon \leq  d_Y(Fx,Fy)\leq \lambda_2 d_X(x,y)+\epsilon.
\end{equation*}
If in addition, the image of $F$ is $C$-dense in $Y$ for some $C\geq 0$, we say that $F$ is a $(\lam_1,\lam_2,\ep)$-\emph{quasi-isometry}. 

In general, quasi-isometries do not preserve hyperbolicity, unless the spaces are rough geodesic. Under that assumption, quasi-isometries are also close to preserve Gromov products. The following proposition appears in \cite[Prop.~15 (i), Ch.~5]{GhydelaHarpe1990book} in the case of geodesic metric spaces, and can be easily adapted to the rough geodesic setting:
\begin{prop}\label{strongqi}
For all $\al,\del,\ep\geq 0$ and $\lam_1,\lam_2>0$ there exists some $A=A(\al,\del,\lam_1,\lam_2,\ep)\geq 0$ such that the following holds. Let $(X,d_X), (Y,d_Y)$ be $\del$-hyperbolic and $\al$-rough geodesic pseudometric spaces, and $F:X\ra Y$ a $(\lam_1,\lam_2,\ep)$-quasi-isometric embedding. Then for all $x,y,w\in X$:
\begin{equation*}
    \frac{1}{\lam_1}\groprod{x}{y}{w,d_X}-A\leq \groprod{Fx}{Fy}{Fw,d_Y}\leq \lam_2\groprod{x}{y}{w,d_X}+A.
\end{equation*}
\end{prop}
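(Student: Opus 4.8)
The plan is to reduce the statement to the familiar geometric meaning of the Gromov product, namely that $\groprod{a}{b}{c}$ is, up to a bounded error, the distance from the base point $c$ to a (rough) geodesic joining $a$ and $b$, and then to track how this distance behaves under $F$. The first step is to record the auxiliary estimate that in any $\del$-hyperbolic, $\al$-rough geodesic space $(Z,d_Z)$ there is a constant $C=C(\al,\del)$ such that, for all $a,b,c\in Z$ and every rough geodesic $\gamma$ from $a$ to $b$,
$$|\,\groprod{a}{b}{c,d_Z}-d_Z(c,\gamma)\,|\le C,$$
where $d_Z(c,\gamma)=\min_i d_Z(c,\gamma_i)$ is the distance from $c$ to the sequence defining $\gamma$. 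One inequality is elementary: near-additivity of $d_Z$ along $\gamma$ together with the triangle inequality gives $\groprod{a}{b}{c,d_Z}\le d_Z(c,\gamma)+O(\al)$ straight from the definition. The reverse bound $d_Z(c,\gamma)\le \groprod{a}{b}{c,d_Z}+O(\al+\del)$ is the rough-geodesic version of a standard fact for geodesic spaces \cite[Ch.~5]{GhydelaHarpe1990book}, \cite{BridsonHaefliger1999}, and follows from the four-point inequality \eqref{gromovin} by the usual argument, the rough-geodesic slack entering only additively.

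For the main estimate, fix $x,y,w\in X$ and a rough geodesic $\gamma\colon x=x_0,\dots,x_n=y$ in $X$. Applying the two defining inequalities of the $(\lam_1,\lam_2,\ep)$-quasi-isometric embedding pointwise to the $x_i$ and taking minima yields
$$\tfrac{1}{\lam_1}d_X(w,\gamma)-\ep\le d_Y(Fw,F\gamma)\le \lam_2 d_X(w,\gamma)+\ep,$$
where $F\gamma$ denotes the image sequence $Fx_0,\dots,Fx_n$. A direct computation shows that $F\gamma$ is a rough quasigeodesic from $Fx$ to $Fy$ with constants depending only on $\al,\lam_1,\lam_2,\ep$. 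By the Morse lemma (stability of rough quasigeodesics in $\del$-hyperbolic, $\al$-rough geodesic spaces), $F\gamma$ lies within Hausdorff distance $C'=C'(\al,\del,\lam_1,\lam_2,\ep)$ of a genuine $\al$-rough geodesic $\gamma'$ from $Fx$ to $Fy$ in $Y$, so that $|d_Y(Fw,F\gamma)-d_Y(Fw,\gamma')|\le C'$.

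It remains to chain the estimates. The auxiliary estimate in $X$ gives $d_X(w,\gamma)=\groprod{x}{y}{w,d_X}+O(C)$, the same estimate applied to $\gamma'$ in $Y$ gives $d_Y(Fw,\gamma')=\groprod{Fx}{Fy}{Fw,d_Y}+O(C)$, and the Morse-lemma comparison links $d_Y(Fw,\gamma')$ to $d_Y(Fw,F\gamma)$. Substituting these into the displayed double inequality turns the factors $\tfrac{1}{\lam_1}$ and $\lam_2$ into exactly the asserted coefficients of $\groprod{x}{y}{w,d_X}$, while every additive error ($\ep$, $C$, $\lam_2 C$, $C'$) depends only on $\al,\del,\lam_1,\lam_2,\ep$ and may be absorbed into a single constant $A$. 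The main obstacle is the Morse-lemma step: one must check that the stability of quasigeodesics, usually stated for geodesic spaces, survives in the merely rough-geodesic setting with constants depending only on the allowed data, and that $\gamma'$ can be taken $\al$-rough geodesic so that the auxiliary estimate in $Y$ applies with the same constant $C=C(\al,\del)$; alternatively one could isometrically embed $Y$ into a geodesic hyperbolic space and invoke the classical Morse lemma there. Everything else is bookkeeping of additive constants.
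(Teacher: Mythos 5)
Your argument is correct and is essentially the adaptation the paper has in mind: the paper gives no proof of its own but points to \cite[Prop.~15 (i), Ch.~5]{GhydelaHarpe1990book}, whose geodesic-space proof is exactly your route (the Gromov product equals the distance to a connecting geodesic up to a bounded error, combined with stability of quasi-geodesics), with the rough-geodesic slack entering only additively. Your fallback for the Morse-lemma step --- passing to a geodesic $\del$-hyperbolic superspace --- is the same injective-hull device the paper itself uses in Proposition \ref{prop.injhull} and Lemma \ref{lem.roughgeodfromhyp}, so nothing further is needed.
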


Sometimes, we want to conclude properties for hyperbolic spaces as if they were geodesic or rough geodesic. For our purposes, we consider the \emph{injective envelope} functor \cite{Lang2013injectivehull}, which given a metric space $(X,d)$, assigns another metric space $(\hat{X},\hat{d})$ and an isometric embedding $(X,d)\xhookrightarrow{i} (\hat{X},\hat{d})$. The properties that we need are that $(\hat{X},\hat{d})$ is geodesic, $\del$-hyperbolic if $(X,d)$ is $\del$-hyperbolic \cite[Prop.~1.3]{Lang2013injectivehull}, and that any isometry of $X$ extends uniquely under $i$ to an isometry of $\hat{X}$, which follows from functoriality \cite[Prop.~3.7]{Lang2013injectivehull}. If $(X,d)$ is just a pseudometric space, we can consider the injective hull of its metric identification. We summarize these properties into the following proposition.

\begin{prop}\label{prop.injhull}
If $(X,d)$ is a $\del$-hyperbolic pseudometric space, then its injective hull $(\hat{X},\hat{d})$ is $\del$-hyperbolic and geodesic, and there is an isometric map $i:(X,d) \ra (\hat{X},\hat{d})$. In addition, every isometry of $(X,d)$ extends uniquely to an isometry of $(\hat{X},\hat{d})$, and the map $i$ is equivariant with respect to the group of isometries of $(X,d)$. 
\end{prop}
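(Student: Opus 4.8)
The plan is to reduce everything to the case of genuine metric spaces via the metric identification, and then invoke the cited properties of Lang's injective hull functor. First I would form the metric identification $(\ov{X},\ov{d})$ of $(X,d)$, where $\ov X = X/\!\sim$ with $x\sim y$ iff $d(x,y)=0$ and $\ov d([x],[y]):=d(x,y)$. This is well-defined because the triangle inequality forces $d(x,y)=d(x',y)$ whenever $d(x,x')=0$; the same observation shows the Gromov product descends to $\ov X$, so the defining inequality \eqref{gromovin} passes verbatim to $(\ov X,\ov d)$ and the metric identification is again $\del$-hyperbolic. The quotient map $q:(X,d)\ra(\ov X,\ov d)$ is distance-preserving, i.e.\ an isometric map, though of course not injective when $d$ is a proper pseudometric.

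Next I would apply Lang's construction to the metric space $(\ov X,\ov d)$. By \cite[Prop.~1.3]{Lang2013injectivehull} its injective hull $(\hat X,\hat d)$ is geodesic and, since $(\ov X,\ov d)$ is $\del$-hyperbolic, also $\del$-hyperbolic; moreover there is a canonical isometric embedding $e:(\ov X,\ov d)\hookrightarrow(\hat X,\hat d)$. Setting $i:=e\circ q$ gives the desired map $(X,d)\ra(\hat X,\hat d)$, which is isometric since $\hat d(ix,iy)=\ov d(qx,qy)=d(x,y)$ for all $x,y\in X$.

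It remains to handle the isometry group. Any isometry $g$ of $(X,d)$ preserves the relation $\sim$ and hence descends to an isometry $\ov g$ of $(\ov X,\ov d)$ with $\ov g\circ q=q\circ g$. By functoriality of the injective hull \cite[Prop.~3.7]{Lang2013injectivehull}, $\ov g$ extends uniquely to an isometry $\hat g$ of $(\hat X,\hat d)$ satisfying $\hat g\circ e=e\circ\ov g$. Composing the two relations yields $\hat g\circ i=\hat g\circ e\circ q=e\circ\ov g\circ q=e\circ q\circ g=i\circ g$, which is exactly the asserted equivariance; the assignment $g\mapsto\hat g$ is then a homomorphism into $\Isom(\hat X,\hat d)$ factoring through $\Isom(\ov X,\ov d)$. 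Uniqueness of $\hat g$ follows from the uniqueness in \cite[Prop.~3.7]{Lang2013injectivehull} together with the fact that $\hat g$ is already forced on the image $i(X)=e(\ov X)$ by equivariance.

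This argument is essentially bookkeeping once Lang's results are granted; the only points requiring genuine care — and thus the main (if modest) obstacle — are verifying that $\del$-hyperbolicity and isometries descend correctly to the metric identification, and tracking the two commuting squares so that the extension respects the \emph{full} isometry group of the pseudometric space rather than merely that of its metric identification.
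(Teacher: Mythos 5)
Your proposal is correct and follows essentially the same route as the paper, which likewise passes to the metric identification and then cites \cite[Prop.~1.3]{Lang2013injectivehull} and \cite[Prop.~3.7]{Lang2013injectivehull} for geodesicity, preservation of $\del$-hyperbolicity, and the functorial extension of isometries. The paper states the proposition as a summary of these facts without writing out the commuting squares, so your version is simply a more explicit account of the same argument.
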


We end this subsection with an application of the proposition above, which will be needed in Section \ref{secproperness}. A $(\lam_1,\lam_2,\ep)$-\emph{quasigeodesic} in a pseudometric space $(X,d)$ is a $(\lam_1,\lam_2,\ep)$-quasi-isometric embedding $\gam:\corchete{0,\dots,n}\ra X$, where $n$ is a nonnegative integer and $\corchete{0,\dots,n}$ is endowed with its usual distance induced by $\Z$. The pseudometric space $(X,d)$ is $(\lam_1,\lam_2,\ep)$-\emph{quasigeodesic} if for any two points $x,y\in X$ there is a $(\lam_1,\lam_2,\ep)$-quasigeodesic $\gam:\corchete{0,\dots,n}\ra X$ with $\gam(0)=x$ and $\gam(n)=y$. The next lemma ensures that hyperbolic and uniformly quasigeodesic pseudometric spaces are rough geodesic with controlled rough geodesic constant.

\begin{lemma}\label{lem.roughgeodfromhyp}
For any $\lam_1,\lam_2>0$ and $\ep,\del\geq 0$ there exists some $\al=\al(\lam_1,\lam_2,\ep,\del)\geq 0$ such that if $(X,d)$ is a $(\lam_1,\lam_2,\ep)$-quasigeodesic and  $\del$-hyperbolic metric space, then $(X,d)$ is $\al$-rough geodesic.
\end{lemma}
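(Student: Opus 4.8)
The plan is to use the injective hull to reduce to a genuinely geodesic space, and then invoke the stability of quasigeodesics (the Morse lemma). First I would embed $(X,d)$ isometrically into its injective hull $(\hat{X},\hat{d})$ via Proposition \ref{prop.injhull}, so that $\hat{X}$ is geodesic and $\del$-hyperbolic. Fix $x,y\in X$ and a $(\lam_1,\lam_2,\ep)$-quasigeodesic $\gam:\corchete{0,\dots,n}\ra X$ joining them. Since $\hat{X}$ is geodesic, I would interpolate $\gam$ by geodesic segments between consecutive vertices to obtain a \emph{continuous} $(\lam_1',\lam_2',\ep')$-quasigeodesic in $\hat{X}$ whose constants depend only on $\lam_1,\lam_2,\ep$; each interpolating segment has $\hat{d}$-length at most $\lam_2+\ep$, so every point of this path lies within $(\lam_2+\ep)/2$ of some vertex $\gam(i)\in X$.

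Next I would apply the Morse lemma (stability of quasigeodesics, see \cite{BridsonHaefliger1999}) in the geodesic $\del$-hyperbolic space $\hat{X}$: there is a constant $H=H(\lam_1,\lam_2,\ep,\del)\geq 0$ such that the interpolated quasigeodesic and the geodesic segment $[x,y]\subset\hat{X}$ lie within Hausdorff distance $H$. Combined with the previous remark, every point of $[x,y]$ then lies within $H':=H+(\lam_2+\ep)/2$ of a point of $X$.

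Finally I would sample the geodesic. Let $L=d(x,y)=\hat{d}(x,y)$ and let $c:[0,L]\ra\hat{X}$ be the arclength parametrization of $[x,y]$, so $c(0)=x$ and $c(L)=y$. Put $m=\lceil L\rceil$ and $p_i=c(\min(i,L))$ for $0\leq i\leq m$; a direct check of the two cases ($j\leq L$ and $j=m$) gives $|\hat{d}(p_i,p_j)-|j-i||<1$ for all $i,j$. Choose $z_i\in X$ with $\hat{d}(z_i,p_i)\leq H'$, taking $z_0=x$ and $z_m=y$ (both are exactly $p_0,p_m$, hence in $X$). Because the embedding is isometric, $d(z_i,z_j)=\hat{d}(z_i,z_j)$, and the triangle inequality yields $|d(z_i,z_j)-|j-i||\leq 2H'+1$. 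Thus the sequence $z_0,\dots,z_m$ witnesses that $(X,d)$ is $\al$-rough geodesic with $\al:=2H'+1=\al(\lam_1,\lam_2,\ep,\del)$.

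The main obstacle is the Morse lemma step: one must ensure the stability constant $H$ depends only on the quasi-isometry and hyperbolicity constants, and that the a priori discrete (and possibly non-continuous) quasigeodesic $\gam$ is handled correctly. This is precisely where the injective hull is essential, since it supplies an ambient geodesic space in which the interpolation and the lemma make sense; once $H'$ is in hand, the sampling and the triangle-inequality bookkeeping are routine.
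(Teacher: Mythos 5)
Your proposal is correct and follows essentially the same route as the paper: embed into the injective hull via Proposition \ref{prop.injhull}, apply the Morse lemma there to compare the quasigeodesic with a genuine geodesic, and then sample the geodesic at integer parameters, replacing each sample by a nearby point of the quasigeodesic to produce the rough geodesic, with $\al$ depending only on the Morse constant. Your treatment is in fact slightly more careful than the paper's on the technical point of interpolating the discrete quasigeodesic before invoking the Morse lemma.
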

\begin{proof}
Without loss of generality, we can assume that $(X,d)$ is a metric space. By Proposition \ref{prop.injhull}, we consider $(X,d)$ as a subset of its injective envelope $(\hat{X},\hat{d})$, which is $\del$-hyperbolic and geodesic, and let $(X,d) \xhookrightarrow{i} (\hat{X},\hat{d})$ be the inclusion map. Let $\gam:\corchete{0,\dots,n}\ra X$ be a $(\lam_1,\lam_2,\ep)$-quasigeodesic, and apply the Morse lemma to obtain a constant $D=D(\lam_1,\lam_2,\ep,\del)\geq 0$ and a geodesic $\gam':[0,d(\gam(0),\gam(n))] \ra \hat{X}$ joining $\gam(0)$ and $\gam(n)$ such that the Hausdorff distance between $\gam(\corchete{0,\dots,n})$ and $\gam'([0,d(\gam(0),\gam(n))])$ is at most $D$. If $j\in [0,d(\gam(0),\gam(n))]$ is an integer, then there is some $i_j\in \corchete{0,\dots,n}$ such that $\hat{d}(\gam'(j),\gam(i_j))\leq D$, for which we can assume $i_0=0$. In this way, if we define $m=\lfloor{\hat{d}(\gam(0),\gam(n))}\rfloor$, $x_j=\gam(i_j)$ for $0\leq j\leq m$ and $x_{m+1}=\gam(n)$, the sequence $x_0,x_1,\dots,x_{m},x_{m+1}$ is a $(2D+1)$-rough geodesic in $(X,d)$ joining $\gam(0)$ and $\gam(n)$. As every pair of points in $X$ can be joined by a $(\lam_1,\lam_2,\ep)$-quasigeodesic, the conclusion follows by taking $\al=2D(\lam_1,\lam_2,\ep,\del)+1$. 
\end{proof}

\subsection{Boundary at infinity} If $(X,d)$ is $\del$-hyperbolic and $w\in X$, there is a well-defined \emph{Gromov boundary} $\partial X$ consisting of the equivalence classes of sequences $(x_n)_n$ in $X$ such that $\groprod{x_m}{x_n}{w} \to \infty$ as $m,n\to \infty$, where two sequences $(x_n)_n$ and $(y_n)_n$ represent the same point at infinity if $\groprod{x_n}{y_n}{w} \to \infty$. 

The Gromov product can be extended to $X\cup \partial X$ via
$$\groprod{p}{q}{w}:= \sup\corchete{\liminf_{n\to \infty}{\groprod{p_n}{q_n}{w}} \colon p_n \to p, q_n \to q  }.$$
In this way, by \eqref{gromovin} we get that for all $p,q,r\in X\cup \partial X$ and $w\in X$:
\begin{equation}\label{gromovinboundary}
\groprod{p}{r}{w}\geq \min\left(\groprod{p}{q}{w},\groprod{q}{r}{w}\right)-3\delta. 
\end{equation}
We can also define a \emph{Busemann function} on $X\times X\times \partial X$ according to 
$$\beta_d(x,y;p):=\sup\corchete{\limsup_{n \to \infty}
{(d(x,p_n)-d(y,p_n)) \colon p_n\to p }}$$
for $x,y\in X$, $p\in \partial X$.
$\del$-hyperbolicity implies that for all $x,y\in X$ and $q\in \partial X$ we have
\begin{equation}\label{eqbuseq}
    |\beta_d(x,y;p)-(d(x,y)-2\groprod{x}{p}{y,d})|\leq 2\del
\end{equation} and 
$$|\beta_d(x,y;p)+\beta_d(y,x;p)|\leq 8\del.$$

In addition, for all $\ep>0$ satisfying $\ep\del<\frac{\log{2}}{6}$ there is a metric $\varrho=\varrho_\ep$ on $\partial X$ such that for all $p,q\in \partial X$
\begin{equation}\label{eqvismet}
    (3-2e^{3\ep\del})e^{-\ep\groprod{p}{q}{1,d}}\leq \varrho_\ep(p,q)\leq e^{-\ep\groprod{p}{q}{1,d}}.
\end{equation}
These are called \emph{visual metrics} on $\partial X$, and induce a canonical topology on $\partial X$ when $(X,d)$ is proper  \cite[Prop.~10, Ch.~7]{GhydelaHarpe1990book}.

\subsection{Hyperbolic groups}
A group $G$ is \emph{hyperbolic} if some (hence any) word metric with respect to a finite symmetric generating set is hyperbolic. $G$ is \emph{non-elementary} if it is not virtually abelian, in which case the Gromov boundary $\partial G$ is a well-defined infinite, compact metrizable space. 

Two pseudometrics $d_1$, $d_2$ on $G$ are $(\lam_1,\lam_2,\ep)$-\emph{quasi-isometric} if the identity $id:(G,d_1)\ra (G,d_2)$ is a $(\lam_1,\lam_2,\ep)$-quasi-isometry. Recall that $\calD(G)$ is the set of all hyperbolic and left-invariant pseudometrics on $G$, quasi-isometric to some word metric with respect to a finite generating set, and that $\scrD(G)$ is the quotient of $\calD(G)$ under the relation of being $(\lam^{-1},\lam,\ep)$-quasi-isometric for some $\lam>0,\ep\geq 0$.

For any $d\in \calD(G)$ and $n\geq 0$, we let $B_d(n)$ denote the closed ball of radius $n$ at the identity element, with respect to $d$. The \emph{critical exponent} of $d\in \calD(G)$ is then the quantity
$$h(d):=\lim_{n\to \infty}{\frac{\log{|B_d(n)|}}{n}},$$
well-defined by subadditivity. It is well-known that $h(d)$ is finite and positive, so that the rescaled metric $\hat{d}=h(d)\cdot d$ satisfies $h(\hat{d})=1$. Considering this, for $\rho\in \scrD(G)$ we define $$\hat\rho:=\corchete{d\in \rho \colon h(d)=1}.$$

\begin{defi}\label{defqcmeas}
Let $d\in \calD(G)$. A Borel probability measure $\nu$ on $\partial G$ is \emph{quasiconformal} for $d$ if $x\nu \ll \nu$ for all $x\in G$ and there exists a constant $C\geq 1$ such that for every $x\in G$ and $\nu$-almost every $p\in \partial G$ we have
\begin{equation}\label{eqqcmeasdef}
     C^{-1}e^{-h(d)\beta(x,1;p)}\leq \frac{dx\nu}{d\nu}(p)\leq Ce^{-h(d)\beta(x,1;p)},
\end{equation}
where $\beta=\beta_d$ is the Busemann function for $d$.
\end{defi}
It is clear that if $[d]=[d']$ in $\scrD(G)$, then $\nu$ is quasiconformal for $d$ if and only if it is quasiconformal for $d'$. Coornaert \cite{Coornaert1993PSmeasures} proved the following for arbitrary $d\in \calD(G) $:
\begin{itemize}
    \item $d$ admits a quasiconformal measure.
    \item If $\nu_1,\nu_2$ are quasiconformal measures for $d$, then they are \emph{equivalent}, in the sense that there is some $C \geq 1$ such that $C^{-1}\nu_2(A) \leq\nu_1(A) \leq C\nu_2(A)$ for any $A\subset \partial G$ Borel.
    \item Every quasiconformal measure $\nu$ for $d$ is \emph{ergodic}: if $A\subset \partial G$ is a $G$-invariant Borel subset, then either $\nu(A)=0$ or $\nu(A)=1$.
\end{itemize}

Moreover, the multiplicative constant in \eqref{eqqcmeasdef} can be chosen uniformly in the following sense: for all $\del\geq 0$ there is some $D_\del\geq 1$ such that any $\del$-hyperbolic pseudometric $d\in \calD(G)$ with $h(d)=1$ admits a quasiconformal measure $\nu$ such that for all $x\in G$ and $\nu$-almost every $p\in \partial G$: 
\begin{equation}\label{equnifqc}
    D_\del^{-1}e^{-\beta_d(x,1;p)}\leq \frac{dx\nu}{d\nu}(p) \leq D_\del e^{-\beta_d(x,1;p)}. 
\end{equation}
This uniformity is crucial in our proof of Theorem \ref{BMctns} (see Proposition \ref{convtoqc}).


\section{The Marked Length Spectrum}\label{secMLS}
In this section we prove Proposition \ref{Lambda=DilDil},  which characterizes the metric $\Del$ on $\scrD(G)$ in terms of marked length spectra, recovering the (symmetrized) Thurston's distance when restricted to Teichmüller space. Our main tool is Proposition \ref{thmDGLM}, which will also be used in later sections.

Given $d\in \calD(G)$ and $x\in G$, the \emph{translation length} of $x$ is given by the formula
\begin{equation*}
    \ell_d(x)=\lim_{n\to \infty}{\frac{d(x^n,1)}{n}},
\end{equation*}
well-defined by subadditivity. Recall that the \emph{marked length spectrum} of $d$ is the function 
$$\ell_d:G \ra \R, \hspace{2mm} x \mapsto \ell_d(x).$$
The translation length of $x\in G$ is positive if and only if $x$ is infinite order, and we have that $d,d'\in \calD(G)$ are roughly similar if and only if $\ell_d=\lam\ell_{d'}$ for some constant $\lam>0$ \cite[Thm.~1.2]{CantrellTanaka2021Manhattan}.

\begin{rmk}\label{torsionfree}
For any pseudometric $d\in \calD(G)$ and $x,y\in G$, we have $d(x,y)=d(y^{-1}x,1)\geq \ell_d(y^{-1}x)$. Since $\ell_d(y^{-1}x)>0$ when $y^{-1}x$ is infinite order, we deduce that the subgroup $\corchete{x\in G\colon d(x,1)=0}$ is torsion, hence it is a finite. In particular, if $G$ is torsion-free, then any pseudometric in $\calD(G)$ is a genuine metric.
\end{rmk}

The marked length spectrum characterizes metric structures \cite{Furman2002coarsenegcurv}, and indeed, we can partially recover a pseudometric in $\calD(G)$ from its marked length spectrum. This is the content of the following proposition, which is a generalization of \cite[Prop.~2.2.2]{DelzantGuichardLabourieMozes2011Displacing}.
\begin{prop}\label{thmDGLM} For any $d\in \calD(G)$ and $\ep\in (0,1)$, there is a finite subset $U\subset G$ and a constant $C \geq 0$ such that for all $x\in G$, 
\begin{equation}\label{eqapproxbyell}
    (1-\ep)d(x,1)\leq \max_{u\in U}{\ell_d(xu)}+C.
\end{equation}
\end{prop}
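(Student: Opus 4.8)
The plan is to adapt the Delzant--Guichard--Labourie--Mozes displacement argument, making it quantitative so as to lose only a factor $(1-\ep)$. Fix the $\del$-hyperbolic pseudometric $d$ and write $o=1$ and $L=d(x,o)$. First I would pass to a geodesic model and record the basic translation-length estimate. By Proposition~\ref{prop.injhull} we may realize $(G,d)$ inside its injective hull $(\hat X,\hat d)$, which is geodesic and $\del$-hyperbolic, with $G$ acting by isometries fixing the orbit of $\hat o:=i(o)$; since $i$ is an equivariant isometric embedding, $\ell_d(g)$ equals the translation length of $g$ acting on $\hat X$, and Gromov products of orbit points are unchanged. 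The standard per-step computation with the four-point inequality \eqref{gromovin} gives $d(g^n,o)\ge n\bigl(d(g,o)-2\groprod{g}{g^{-1}}{o}\bigr)-C_0\del$, whence, dividing by $n$ and letting $n\to\infty$,
\[
\ell_d(g)\ \ge\ d(g,o)-2\groprod{g}{g^{-1}}{o}-C_0\del ,
\]
with $C_0=C_0(\del)$. Because translation length is a conjugacy invariant we have $\ell_d(xu)=\ell_d(ux)$, and $d(ux,o)\ge L-\max_{u\in U}d(u,o)$; hence the only genuine task is to choose $u$ for which the ``fold'' $\groprod{ux}{(ux)^{-1}}{o}$ is small relative to $L$.

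The heart of the argument is the construction of a finite \emph{steering set}. Using that $G$ is non-elementary, I would fix finitely many loxodromic elements whose attracting and repelling fixed points on $\partial G$ are pairwise distinct and well separated (these are dense in $\partial^2 G$), and then pass to sufficiently high powers: a large power $s^N$ sends every boundary point bounded away from $s^-$ to within a bounded Gromov product of $s^+$. By compactness of $\partial G$, a finite collection of such powers $U_0$ can be arranged so that \emph{for every} $\xi\in\partial G$ there is some $u\in U_0$ with $\groprod{u\xi}{\xi}{\hat o}\le B_0$ for a uniform constant $B_0$. Put $U=U_0\cup\{1\}$ and $M=\max_{u\in U}d(u,o)$; here $U,B_0,M$ depend only on $d$ and $\ep$.

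To assemble the proof, let $\xi,\xi^-\in\partial X$ be endpoints of geodesic rays from $\hat o$ through $x$ and $x^{-1}$, and set $F=\groprod{x}{x^{-1}}{o}$. If $F\le \ep L$ I take $u=1$ and the displayed estimate already yields $\ell_d(x)\ge(1-2\ep)L-C_0\del$. If $F>\ep L$ I choose $u\in U_0$ with $\groprod{u\xi}{\xi}{\hat o}\le B_0$: the forward point $ux$ lies at bounded distance from the ray toward $u\xi$ (since $d(ux,u\cdot x)$-type translates are controlled), while the backward point $(ux)^{-1}=x^{-1}u^{-1}$ tracks the direction $\xi^-$; as $\xi$ and $\xi^-$ fellow-travel for length $F>\ep L\ge B_0$ (for $L$ large), the extended inequality \eqref{gromovinboundary} forces $\groprod{ux}{(ux)^{-1}}{o}\le B_0+O(\del)$. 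In either case the estimate gives $\ell_d(xu)=\ell_d(ux)\ge (1-2\ep)L-C$ once $L$ exceeds a threshold, and the finitely many metric structures with $L$ bounded are absorbed into $C$; replacing $\ep$ by $\ep/2$ yields \eqref{eqapproxbyell}.

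The step I expect to be the main obstacle is the steering set together with the passage from boundary directions to finite-scale Gromov products. Constructing a single finite $U$ that simultaneously ``rotates every direction of $\partial G$ off itself'' is exactly the quantitative core of the displacement mechanism, and it relies essentially on non-elementarity and compactness of $\partial G$. The delicate point is that the bound $\groprod{u\xi}{\xi}{\hat o}\le B_0$ at infinity must be transferred to a bound on $\groprod{ux}{(ux)^{-1}}{o}$ up to the relevant scale $L$, which is where the geodesic (equivalently, rough-geodesic) structure supplied by the injective hull is indispensable, since in a merely $\del$-hyperbolic pseudometric the correspondence between boundary separation and finite-scale divergence is only coarse.
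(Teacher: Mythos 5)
Your argument is correct and follows essentially the same route as the paper, which adapts Delzant--Guichard--Labourie--Mozes: the lower bound $\ell_d(g)\ge d(g,1)-2\groprod{g}{g^{-1}}{1,d}-O(\delta)$ is their ``almost cyclically reduced'' lemma, and your finite steering set plays exactly the role of their ping-pong pair $\{u,v\}$ (the paper takes $U=\{1,u,v\}$ and shows one of $x,xu,xv$ has small fold). The only substantive difference is cosmetic: you produce the correctors via North--South dynamics on $\partial G$ inside the injective hull, whereas the cited argument works at finite scale.
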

\begin{proof}
The proof is just an adaptation of the argument in \cite[Prop.~2.2.2]{DelzantGuichardLabourieMozes2011Displacing}, so we will only mention when modifications are needed. Assuming that $d$ is $\del$-hyperbolic, given $\ep\in (0,1)$ we say that an element $x\in G$ is $\ep$-\emph{almost cyclically reduced} if $\groprod{x}{x^{-1}}{1,d}\leq \frac{\ep}{2}d(x,1)-\del$ (note that \cite[Prop.~2.2.2]{DelzantGuichardLabourieMozes2011Displacing} only considers the case $\ep=2/3)$. 

Mimicking the proof of \cite[Lemma.~2.2.3]{DelzantGuichardLabourieMozes2011Displacing} we deduce that if $x$ is $\ep$-almost cyclically reduced then $(1-\ep)d(x,1)\leq \ell_d(x)$. Also, if $u,v \in G$ is a ping pong pair in the sense of \cite[Sec.~2.2.4]{DelzantGuichardLabourieMozes2011Displacing} then an adaptation of the proof of \cite[Lemma.~2.2.6]{DelzantGuichardLabourieMozes2011Displacing} implies that if $x\in G$ satisfies $d(x,1)\geq (1+\frac{2}{\ep})\max(d(u,1),d(v,1))+8\del/\ep$, then some of the three elements $x,xu, xv$ is $\ep$-almost cyclically reduced.

In consequence, the proposition follows with $C=(1-\ep)((1+\frac{2}{\ep})\max({d(u,1),d(v,1)})+8\del/\ep)$ and $U=\corchete{1,u,v}$.
 \end{proof}
The next definition follows \cite{CantrellTanaka2021Manhattan}: 
\begin{defi}
Given two pseudometrics $d,d'\in \calD(G)$, we define its \emph{(positive) dilation} by the formula
\begin{equation}\label{dilationeq}
    \Dil(d,d'):=\sup_{x}\frac{\ell_d(x)}{\ell_{d'}(x)},
\end{equation}
where $x$ runs over all infinite order elements of $G$.
\end{defi}
\begin{lemma}\label{lem.Dilqi}
For any $d, d'\in \calD(G)$ and $\ep\in (0,1)$ there is a constant $C'\geq 0$ such that 
\begin{equation}\label{eq.Dilqi}
   (1-\ep)\Dil(d',d)^{-1}d'(x,y)-C'\leq d(x,y)\leq  (1-\ep)^{-1}\Dil(d,d')d'(x,y)+C'
\end{equation}
for all $x,y\in G$.
\end{lemma}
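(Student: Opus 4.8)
The plan is to reduce to the case $y=1$ and then prove only the upper bound in \eqref{eq.Dilqi}, deducing the lower bound by exchanging the roles of $d$ and $d'$. Since both pseudometrics are left-invariant, $d(x,y)=d(y^{-1}x,1)$ and $d'(x,y)=d'(y^{-1}x,1)$, so it suffices to establish the two inequalities for $y=1$ and all $x\in G$. Moreover, the lower bound in \eqref{eq.Dilqi} is precisely the upper bound applied to the ordered pair $(d',d)$, after dividing through by $(1-\ep)^{-1}\Dil(d',d)$ and absorbing the resulting additive term; hence I will only prove
$$d(x,1)\leq (1-\ep)^{-1}\Dil(d,d')\,d'(x,1)+C_1'$$
for a suitable constant $C_1'$, and then take $C'$ to be the maximum of the constants obtained from the two applications.

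Before starting, I record that $\Dil(d,d')$ and $\Dil(d',d)$ are finite and positive: as $d,d'\in\calD(G)$ are each quasi-isometric to a word metric they are quasi-isometric to one another, so their marked length spectra are comparable up to a multiplicative factor, while positivity comes from the existence of infinite-order elements in the non-elementary group $G$. To prove the displayed bound I apply Proposition \ref{thmDGLM} to $d$ with the given $\ep$, which furnishes a finite set $U\subset G$ and $C\geq 0$ with $(1-\ep)d(x,1)\leq \max_{u\in U}\ell_d(xu)+C$ for every $x$. The crucial conversion is the pointwise inequality $\ell_d(g)\leq \Dil(d,d')\,\ell_{d'}(g)$, valid for all $g\in G$: for infinite-order $g$ this is the definition of the dilation, and for torsion $g$ (including the identity) both sides vanish. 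Finally, subadditivity and left-invariance give $\ell_{d'}(xu)\leq d'(xu,1)\leq d'(x,1)+d'(u,1)$, so that $\max_{u\in U}\ell_{d'}(xu)\leq d'(x,1)+M$ with $M=\max_{u\in U}d'(u,1)<\infty$.

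Chaining these estimates yields $(1-\ep)d(x,1)\leq \Dil(d,d')\bigl(d'(x,1)+M\bigr)+C$, which rearranges to the desired upper bound with $C_1'=(1-\ep)^{-1}\bigl(\Dil(d,d')\,M+C\bigr)$. Running the same argument for the pair $(d',d)$ and rearranging produces the lower bound, and setting $C'$ to the larger of the two constants finishes the proof. I expect no genuine obstacle here; the only points demanding care are the uniform handling of torsion elements in the dilation inequality, where both translation lengths are zero, and the bookkeeping of additive constants when converting the upper bound for $(d',d)$ into the lower bound for $(d,d')$.
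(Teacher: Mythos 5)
Your proof is correct and follows essentially the same route as the paper's: apply Proposition \ref{thmDGLM} to get $(1-\ep)d(x,1)\leq\max_{u\in U}\ell_d(xu)+C$, convert via the pointwise inequality $\ell_d\leq\Dil(d,d')\,\ell_{d'}$, and bound $\ell_{d'}(xu)\leq d'(x,1)+d'(u,1)$ by subadditivity, with the reverse inequality obtained symmetrically. The reduction to $y=1$ and the explicit handling of torsion elements are harmless cosmetic additions.
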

\begin{proof}
First, we note that $\Dil(d',d)^{-1}\ell_{d'}(x)\leq \ell_d(x)\leq \Dil(d,d')\ell_{d'}(x)$ for all $x\in G$. Then, by Proposition \ref{thmDGLM} above, for any $\epsilon\in (0,1)$ we can find a finite set $U\subset G$ and some $C\geq 0$ such that equation \eqref{eqapproxbyell} holds for both $d$ and $d'$. In particular, for any $x,y \in G$ we have
\begin{align*}
    d(x,y) &\leq (1-\ep)^{-1}\max_{u\in U}{\ell_d(y^{-1}xu)}+C \\
    & \leq (1-\ep)^{-1}\Dil(d,d')\max_{u\in U}{\ell_{d'}(y^{-1}xu)}+C\\
    &  \leq (1-\ep)^{-1}\Dil(d,d')d'(x,y)+C+(1-\ep)^{-1}\Dil(d,d')\max_{u\in U}{d'(u,1)},
\end{align*}
and for the second inequality in \eqref{eq.Dilqi} we only require $C'\geq C+(1-\ep)^{-1}\Dil(d,d')\max_{u\in U}{d'(u,1)}$. The first inequality in \eqref{eq.Dilqi} is proven similarly.
\end{proof}
\begin{prop}\label{Lambda=DilDil}
For any $\rho,\rho'\in \scrD(G)$ and $d\in \rho, d'\in \rho'$ we have
$$\Lam(\rho,\rho')=\Dil(d,d')\Dil(d',d).$$
\end{prop}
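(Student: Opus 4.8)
The plan is to prove the two inequalities $\Lam(\rho,\rho')\leq \Dil(d,d')\Dil(d',d)$ and $\Lam(\rho,\rho')\geq \Dil(d,d')\Dil(d',d)$ separately. The upper bound is essentially a restatement of Lemma \ref{lem.Dilqi}: for each $\ep\in(0,1)$ that lemma furnishes a constant $C'\geq 0$ with
$$(1-\ep)\Dil(d',d)^{-1}d'(x,y)-C'\leq d(x,y)\leq (1-\ep)^{-1}\Dil(d,d')d'(x,y)+C'$$
for all $x,y\in G$. Setting $\lam_1=(1-\ep)^{-1}\Dil(d',d)$, $\lam_2=(1-\ep)^{-1}\Dil(d,d')$ and $A=C'$, this is exactly the comparison $\frac{1}{\lam_1}d'-A\leq d\leq \lam_2 d'+A$ appearing in the definition of $\Lam$, so $\Lam(\rho,\rho')\leq \lam_1\lam_2=(1-\ep)^{-2}\Dil(d,d')\Dil(d',d)$. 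Letting $\ep\to 0$ gives the upper bound.

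For the lower bound I would start from an arbitrary admissible triple $(\lam_1,\lam_2,A)$, that is, one satisfying $\frac{1}{\lam_1}d'-A\leq d\leq \lam_2 d'+A$, and push the comparison down to the marked length spectra. Fixing an infinite order element $x\in G$, I apply these inequalities to the pair $(x^n,1)$, divide by $n$, and let $n\to\infty$; since $\ell_d$ and $\ell_{d'}$ are the corresponding limits and the additive constant $A$ disappears after dividing by $n$, this yields
$$\frac{1}{\lam_1}\ell_{d'}(x)\leq \ell_d(x)\leq \lam_2\ell_{d'}(x).$$
Because $x$ has infinite order, both translation lengths are strictly positive, so rearranging gives $\ell_{d'}(x)/\ell_d(x)\leq \lam_1$ and $\ell_d(x)/\ell_{d'}(x)\leq \lam_2$. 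Taking the supremum over all infinite order $x\in G$ produces $\Dil(d',d)\leq \lam_1$ and $\Dil(d,d')\leq \lam_2$, hence $\Dil(d,d')\Dil(d',d)\leq \lam_1\lam_2$. Since $(\lam_1,\lam_2,A)$ was an arbitrary admissible triple, taking the infimum over all of them yields $\Dil(d,d')\Dil(d',d)\leq \Lam(\rho,\rho')$, and combining the two bounds completes the proof.

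The individual steps are short, and there is no serious obstacle; the only points to keep straight are bookkeeping ones. One should record that $\Dil(d,d')$ and $\Dil(d',d)$ are finite, which follows from $d,d'$ being quasi-isometric (running the same $n\to\infty$ limiting argument on the quasi-isometry inequality bounds each dilation by a finite constant), so that the products make sense. I would also note that the limiting argument in the lower bound is precisely the mechanism that converts a \emph{coarse} metric comparison with an additive error into an \emph{exact} multiplicative comparison of translation lengths; this is what makes the two bounds match in the limit and is the conceptual heart of the statement, even though it requires no delicate estimate beyond the positivity of $\ell_d(x)$ and $\ell_{d'}(x)$ for infinite order $x$.
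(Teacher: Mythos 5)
Your proposal is correct and follows essentially the same route as the paper: the upper bound is exactly the application of Lemma \ref{lem.Dilqi} with $\ep\to 0$, and the lower bound is the same limiting argument that converts the coarse comparison $\frac{1}{\lam_1}d'-A\leq d\leq \lam_2 d'+A$ into the exact inequalities $\ell_{d'}/\lam_1\leq \ell_d\leq \lam_2\ell_{d'}$ on translation lengths, then takes the infimum over admissible triples. Your extra bookkeeping remarks (finiteness of the dilations, positivity of $\ell_d$ on infinite order elements) are accurate and only make explicit what the paper leaves implicit.
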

\begin{proof}
If there is a $(\lam_1,\lam_2,\ep)$-quasi-isometry from  $d\in \rho$ to $d'\in \rho'$, then $\ell_{d'}/\lam_1\leq \ell_d\leq \lam_2\ell_{d'}$,  implying that $\Dil(d,d')\Dil(d',d)\leq \Lam(\rho,\rho')$. To prove the reverse inequality, by Lemma \ref{lem.Dilqi}, for any $\epsilon\in (0,1)$ we can find a constant $C'\geq 0$ such that $d$ and $d'$ are $((1-\ep)^{-1}\Dil(d',d),(1-\ep)^{-1}\Dil(d,d'),C')-$quasi-isometric. As this holds for any $\ep\in (0,1)$, we deduce that $\Lam(\rho,\rho')\leq \Dil(d',d)\Dil(d,d')$.
\end{proof}
In some cases, we will choose representatives of metric structures with critical exponent 1. Under this assumption we have:
\begin{lemma}\label{normalizedcriticalexponent}
For all $d,d'\in \calD(G)$, if $h(d)=h(d')=1$ then $\Dil(d,d')\geq 1$. 
\end{lemma}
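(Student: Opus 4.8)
The plan is to argue by contradiction. Suppose $\Dil(d,d')<1$. The point is that a strict dilation bound forces the $d$-balls to grow strictly slower than the $d'$-balls, which is incompatible with $h(d)=h(d')=1$. Note first that $\Dil(d,d')>0$, since every defining ratio $\ell_d(x)/\ell_{d'}(x)$ is positive for $x$ of infinite order.

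First I would convert the dilation bound into a pointwise comparison of pseudometrics using Lemma~\ref{lem.Dilqi}, which is exactly what lets one pass from the translation-length inequality defining $\Dil$ to an honest bound between $d$ and $d'$. Since $\Dil(d,d')<1$, I can choose $\ep\in(0,1)$ small enough that $k:=(1-\ep)^{-1}\Dil(d,d')$ satisfies $0<k<1$. Applying the right-hand inequality of \eqref{eq.Dilqi} with $y=1$ then produces a constant $C'\geq 0$ with
$$d(x,1)\leq k\,d'(x,1)+C'\quad\text{for all }x\in G.$$

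Next I would translate this into an inclusion of balls and compare critical exponents. If $d'(x,1)\leq r$, then $d(x,1)\leq kr+C'$, so $B_{d'}(r)\subseteq B_d(kr+C')$ and hence $|B_{d'}(r)|\leq |B_d(kr+C')|$. Writing $m_n=\lceil kn+C'\rceil$ and using monotonicity of balls, this gives $|B_{d'}(n)|\leq |B_d(m_n)|$, and therefore
$$\frac{\log|B_{d'}(n)|}{n}\leq \frac{\log|B_d(m_n)|}{m_n}\cdot\frac{m_n}{n}.$$
Letting $n\to\infty$, the left-hand side tends to $h(d')=1$; on the right, $m_n\to\infty$ (as $k>0$), so $\frac{\log|B_d(m_n)|}{m_n}\to h(d)=1$ while $\frac{m_n}{n}\to k$, whence the right-hand side tends to $k$. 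This forces $1\leq k<1$, a contradiction, so $\Dil(d,d')\geq 1$.

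There is no serious obstacle here: the whole argument rests on the observation that a domination $d\leq k\,d'+C'$ with $k<1$ cannot coexist with equal critical exponents. The only step needing a little care is the passage to the limit, where the additive constant $C'$ and the multiplicative factor $k$ must be separated correctly; this is handled by rewriting $\frac{\log|B_d(m_n)|}{n}$ as $\frac{\log|B_d(m_n)|}{m_n}\cdot\frac{m_n}{n}$ and invoking $m_n/n\to k$ together with the defining limit for $h(d)$.
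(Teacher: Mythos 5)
Your proof is correct and follows essentially the same route as the paper: both apply Lemma~\ref{lem.Dilqi} to turn the dilation bound into a pointwise comparison $d\leq (1-\ep)^{-1}\Dil(d,d')\,d'+C'$, deduce an inclusion of balls, and compare the critical exponents in the limit. Your contradiction framing versus the paper's direct estimate is only a cosmetic difference.
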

\begin{proof}
By applying Lemma \ref{lem.Dilqi}, for all $\ep \in (0,1)$ we can find some $C'\geq 0$ such that $d(x,1)\leq (1-\ep)^{-1}\Dil(d,d')d'(x,1)+C'$ for any $x\in G$. In particular we have $B_{d'}(n)\subset B_d((1-\ep)^{-1}\Dil(d,d')n+C')$ for all $n\geq 0$, implying that 
\begin{align*}
    1=\lim_{n\to \infty}{\frac{\log{|B_{d'}(n)|}}{n}} &\leq \lim_{n\to \infty}{\frac{\log{|B_d((1-\ep)^{-1}\Dil(d,d')n+C')|}}{n}}\\ & \leq (1-\ep)^{-1}\Dil(d,d')\lim_{n\to \infty}{\frac{\log{|B_d(n)|}}{n}}=(1-\ep)^{-1}\Dil(d,d').
\end{align*}
This holds for all $\ep\in (0,1)$, concluding the desired inequality.
\end{proof}
\begin{rmk}
Proposition \ref{Lambda=DilDil} and Lemma \ref{normalizedcriticalexponent} can also be deduced from properties of the Manhattan curve \cite[Cor.~3.3]{CantrellTanaka2021Manhattan}.
\end{rmk}
\begin{rmk}\label{Thurstonmetric}
Given $\rho,\rho' \in \scrD(G)$ one can define $$\Del^+(\rho,\rho')=\log\Dil(d,d'),$$ where $d\in \hat{\rho}$ and $d'\in \hat\rho'$. Lemma \ref{normalizedcriticalexponent} then tells us that $\Del^+$ is nonnegative, so that it defines an asymmetric distance on $\scrD(G)$. In the case $G$ is a surface group and $\rho,\rho'\in \scrT(G)$, since cocompact lattices in $\Hy^2$ have critical exponent 1, from \eqref{dilationeq} we get that $\Del^+(\rho,\rho')$ is \emph{equal} to the asymmetric Thurston's distance of $\rho$ and $\rho'$ \cite{Thurston1998Minstretchmaps}. The content of Proposition \ref{Lambda=DilDil} is then that $\Del$ is twice the symmetrization of $\Del^+$.
\end{rmk}


\section{$\scrD(G)$ is contractible and unbounded}\label{seccontractibleunbounded}
In this section we prove Theorem \ref{unboundedcontractible}, which splits into Propositions \ref{contractible} and \ref{unbounded}. We always assume that $G$ is hyperbolic and non-elementary. We start with a lemma.
\begin{lemma}\label{sumhypishyp}
If $d_1,d_2\in \calD(G)$, then $d_1+d_2\in \calD(G)$.
\end{lemma}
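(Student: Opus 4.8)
The plan is to verify the three defining properties of $\calD(G)$ for $d:=d_1+d_2$: that it is a left-invariant pseudometric, that it is quasi-isometric to a word metric, and that it is hyperbolic. The first is immediate, since a sum of left-invariant pseudometrics is again a left-invariant pseudometric. For the second, I would fix a finite symmetric generating set $S$ with word metric $d_S$ and record $(\lam_i,\lam_i,\ep_i)$-quasi-isometry estimates $\frac{1}{\lam_i}d_S-\ep_i\leq d_i\leq \lam_i d_S+\ep_i$ for $i=1,2$. Adding the two upper bounds and retaining a single lower bound (say from $d_1$, using $d_2\geq 0$) yields $\frac{1}{\lam}d_S-\ep\leq d\leq \lam d_S+\ep$ with $\lam=\lam_1+\lam_2$ and $\ep=\ep_1+\ep_2$, so $d$ is $(\lam,\lam,\ep)$-quasi-isometric to $d_S$.

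The only real content is hyperbolicity, and here one must be careful: the four-point condition is \emph{not} preserved under quasi-isometry of general pseudometric spaces, so one cannot simply invoke that $d$ is quasi-isometric to the hyperbolic metric $d_S$. Indeed, the Gromov products add, $\groprod{x}{y}{w,d}=\groprod{x}{y}{w,d_1}+\groprod{x}{y}{w,d_2}$, and summing the two four-point inequalities for $d_1$ and $d_2$ gives only a lower bound of the form $\min(\cdot)+\min(\cdot)$, which is weaker than the desired $\min(\cdot+\cdot)$; so the naive computation fails. The way around this is to exploit that $d$ is \emph{quasigeodesic}: if $x=g_0,g_1,\dots,g_n=y$ is a word geodesic, so $d_S(g_i,g_j)=|i-j|$, then the estimate above gives $\frac{1}{\lam}|i-j|-\ep\leq d(g_i,g_j)\leq \lam|i-j|+\ep$, i.e.\ $i\mapsto g_i$ is a $(\lam,\lam,\ep)$-quasigeodesic for $d$ joining $x$ to $y$. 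Hence $(G,d)$ is uniformly quasigeodesic.

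I would then transport hyperbolicity from a genuinely geodesic model. One may take the Cayley graph of $G$ with respect to $S$, or the injective hull of $(G,d_1)$ provided by Proposition \ref{prop.injhull}; either is a geodesic $\del_0$-hyperbolic space, and the composition of the identity $(G,d)\to (G,d_S)$ with the inclusion into this geodesic space is a quasi-isometry. Since $(G,d)$ is quasigeodesic, I can join any three of its points by quasigeodesics, map them into the geodesic hyperbolic target where, by the Morse lemma (stability of quasigeodesics), they remain uniformly close to the sides of a thin geodesic triangle; pulling this back gives a uniform thinness estimate for quasigeodesic triangles in $(G,d)$, equivalent to the four-point inequality \eqref{gromovin} with $\del$ depending only on $\lam,\ep,\del_0$. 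This is the standard quasi-isometry invariance of hyperbolicity among quasigeodesic spaces. (A posteriori, Lemma \ref{lem.roughgeodfromhyp} shows $d$ is even rough geodesic, and Proposition \ref{strongqi} then supplies explicit comparison of Gromov products.)

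The main obstacle is precisely this hyperbolicity step: the genuine subtlety is that four-point hyperbolicity is fragile under quasi-isometry, and it is the uniform quasigeodesity of every pseudometric in $\calD(G)$—which comes for free from word geodesics—that rescues the argument and lets the Morse-lemma machinery apply. Everything else is a routine manipulation of the defining inequalities.
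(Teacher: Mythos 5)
Your reduction to hyperbolicity and your diagnosis that the naive four-point computation fails are both correct, but the step you use to repair it contains a genuine gap: the four-point condition \eqref{gromovin} is \emph{not} a quasi-isometry invariant among merely quasigeodesic spaces, and uniform thinness of quasigeodesic triangles is \emph{not} equivalent to \eqref{gromovin} when the sides have multiplicative constant $\lam>1$. A counterexample is the union of the two positive coordinate axes $X=\corchete{(t,0)\colon t\geq 0}\cup\corchete{(0,t)\colon t\geq 0}$ with the Euclidean metric restricted from $\R^2$: the identity map to the same set with the path metric (isometric to $\R$, hence geodesic and $0$-hyperbolic) is a $(1,\sqrt{2},0)$-quasi-isometry, every pair of points is joined by a uniform quasigeodesic (concatenate the two segments through the origin), and the triangles formed by these quasigeodesics are uniformly thin; nevertheless, for $x=(T,0)$, $y=(0,T)$, $z=(2T,0)$, $w=(0,2T)$ the three pairing sums are $2T$, $3\sqrt{2}\,T$ and $2\sqrt{5}\,T$, whose two largest differ by $(2\sqrt{5}-3\sqrt{2})T\to\infty$, contradicting the standard reformulation of \eqref{gromovin} as requiring the two largest pairing sums to differ by at most $2\del$. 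The underlying issue is that a point lying \emph{on} a $(\lam,\lam,\ep)$-quasigeodesic with $\lam>1$ need not have small Gromov product with the endpoints: here $\groprod{(T,0)}{(0,T)}{(0,0)}=(1-\tfrac{\sqrt{2}}{2})T$ is unbounded even though $(0,0)$ lies on the chosen quasigeodesic. Note also that your hyperbolicity step uses only that $d=d_1+d_2$ is uniformly quasigeodesic and quasi-isometric to a geodesic hyperbolic space, and never uses the hyperbolicity of $d_2$; since those two properties alone do not imply \eqref{gromovin}, the argument cannot be correct as written.

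What is missing is the \emph{quasiruler} property: one needs quasigeodesics $g$ for $d$ along which $\groprod{g(s)}{g(u)}{g(t),d}$ is uniformly bounded for $s\leq t\leq u$, and this is exactly where the hyperbolicity of both summands enters. Since Gromov products add, $\groprod{\cdot}{\cdot}{\cdot,d}=\groprod{\cdot}{\cdot}{\cdot,d_1}+\groprod{\cdot}{\cdot}{\cdot,d_2}$, so a path that is simultaneously a quasiruler for $d_1$ and for $d_2$ is a quasiruler for $d$. The paper argues precisely this way via the characterization of Blach\`ere--Ha\"issinsky--Mathieu (hyperbolicity of a uniformly quasigeodesic space is equivalent to being quasiruled): take uniform quasirulers for $d_1$, observe that they are quasigeodesics for $d_2$ and hence, by the hyperbolicity of $d_2$, quasirulers for $d_2$ as well, conclude they are quasirulers for $d_1+d_2$, and apply the converse direction of the same theorem. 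Your word geodesics would in fact serve as the required family, but proving that they are quasirulers for each $d_i$ is the substantive step, and it cannot be replaced by the Morse-lemma/thin-triangle transfer you describe.
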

\begin{proof}
As $d_1$ and $d_2$ are left-invariant and quasi-isometric to a word metric, it is clear that $d:=d_1+d_2$ is a pseudometric on $G$,  also left-invariant and quasi-isometric to a word metric, so that we are left to prove hyperbolicity of $d$. By \cite[Thm.~1.10]{BlachereHaissinskyMathieu2011HarmvsQconf} there exists a set $\calG=\corchete{g_{x,y}:I_{x,y}\ra (G,d_1) \colon x,y\in G}$ of uniform quasigeodesics with each $g_{x,y}$ joining $x$ and $y$, and such that each $g_{x,y}$ is a $\rho$-quasiruler for $\rho$ uniform in $\calG$, in the sense of \cite[Sec.~1.7]{BlachereHaissinskyMathieu2011HarmvsQconf}.  Since $d_1$ and $d_2$ are quasi-isometric, by Proposition \ref{strongqi} the set $\calG$ is of uniform quasirulers for $d$, and hence $d$ is hyperbolic by \cite[Thm.~1.10]{BlachereHaissinskyMathieu2011HarmvsQconf}. \end{proof}

\begin{prop}\label{contractible}
$\scrD(G)$ is contractible.
\end{prop}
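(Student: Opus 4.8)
The plan is to exploit the single structural fact highlighted in the text just before the statement: by Lemma \ref{sumhypishyp}, the set $\calD(G)$ is closed under addition of pseudometrics. This makes $\calD(G)$ look like a convex cone, and I want to transport that convexity down to $\scrD(G)$ to build an explicit contraction. First I would fix a basepoint $\rho_0=[d_0]\in\scrD(G)$, say the class of a word metric $d_0$ with respect to a finite symmetric generating set, which certainly lies in $\calD(G)$. The goal is to produce a homotopy $H\colon\scrD(G)\times[0,1]\to\scrD(G)$ with $H(\cdot,0)=\id$ and $H(\cdot,1)\equiv\rho_0$.

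The natural candidate is a straight-line homotopy at the level of representative pseudometrics. Given $\rho=[d]\in\scrD(G)$, choose a representative $d$ and set
\begin{equation*}
H(\rho,t)=[(1-t)\,d+t\,d_0].
\end{equation*}
For each $t\in[0,1]$ both $(1-t)d$ and $td_0$ are again hyperbolic left-invariant pseudometrics quasi-isometric to a word metric (scaling a pseudometric in $\calD(G)$ by a nonnegative constant keeps it in $\calD(G)$, and the zero pseudometric causes no trouble at the endpoints since the surviving summand is nondegenerate), so by Lemma \ref{sumhypishyp} the convex combination lies in $\calD(G)$ and $H(\rho,t)$ is a well-defined point of $\scrD(G)$. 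At $t=0$ we recover $[d]=\rho$ and at $t=1$ we recover $[d_0]=\rho_0$, so the endpoint conditions hold. The first thing I must check is that $H$ is well-defined on $\scrD(G)$, i.e. independent of the chosen representative $d$: if $d'$ is roughly similar to $d$, with $|d-k d'|\le A$, then $(1-t)d+td_0$ and $(1-t)d'+td_0$ differ additively by at most $(1-t)A$ after absorbing the factor $k$ into the similarity constant, so they are roughly similar and define the same metric structure. This is a short direct estimate using the definition of rough similarity.

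The main obstacle, and the step requiring genuine care, is continuity of $H$ in both variables jointly with respect to the metric $\Del$. I would establish this by bounding $\Del\bigl(H(\rho,t),H(\rho',t')\bigr)$ using the dilation description of $\Del$ from Proposition \ref{Lambda=DilDil}, together with the defining inequality for $\Lam$. The key elementary inequality is that for fixed pseudometrics the map $t\mapsto(1-t)d+td_0$ is Lipschitz into $(\calD(G),\Del)$: if $\frac{1}{\lam_1}d'-A\le d\le\lam_2 d'+A$ witnesses the distance between $\rho$ and $\rho'$, then inserting convex combinations and using that $td_0$ is common to both sides lets me control $\Lam(H(\rho,t),H(\rho',t))$ in terms of $\Lam(\rho,\rho')$; separately, varying $t$ at fixed $\rho$ only changes the weights and is controlled because $\Del\bigl([(1-t)d+td_0],[(1-s)d+sd_0]\bigr)$ goes to $0$ as $|t-s|\to 0$, the ratios of marked length spectra staying uniformly close. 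Combining these via the triangle inequality for $\Del$ gives joint continuity. I would note that one must be slightly delicate at the endpoint $t=0$ where a summand degenerates, but since $d$ itself is a genuine pseudometric in $\calD(G)$ with positive critical exponent, the limit class is exactly $\rho$ and continuity persists; this degeneration is the only place the argument is not completely formal, and it is handled by the rough-similarity estimates rather than by any genuine obstruction.
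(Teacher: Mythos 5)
Your overall strategy is the same as the paper's: a straight-line homotopy $(1-t)d+td_0$ made legitimate by Lemma \ref{sumhypishyp}. But there is a genuine gap in your well-definedness check, and it is not a formality. A representative $d$ of $\rho$ is only determined up to \emph{rough similarity}, i.e.\ $|d-kd'|\leq A$ with a scaling factor $k$ that need not be $1$, and the affine combination with a fixed $d_0$ does not commute with that scaling. Concretely, take $d=2d'$ (so $[d]=[d']$ and $k=2$, $A=0$). Then for $t\in(0,1)$ the pseudometrics $(1-t)d+td_0=2(1-t)d'+td_0$ and $(1-t)d'+td_0$ are \emph{not} roughly similar: any putative similarity constant $c$ would have to satisfy $2(1-t)=c(1-t)$ and $t=ct$ simultaneously, since $d'$ and $d_0$ are unbounded and quasi-isometric, which forces $c=2$ and $c=1$. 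There is no way to ``absorb the factor $k$ into the similarity constant''; your map $H$ simply depends on the chosen representative, so as written it is not a map on $\scrD(G)$.

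The repair is exactly what the paper does: choose the \emph{normalized} representative $d\in\hat\rho$, i.e.\ with critical exponent $h(d)=1$. Two such representatives of the same class are roughly similar with ratio $k=1$ (since $h(kd')=h(d')/k$), so they differ by a bounded additive error, the convex combinations then genuinely differ by at most $(1-t)A$, and $H$ becomes well defined. Your continuity sketch also quietly needs this normalization: from $\Del(\rho,\rho')$ small one only gets $\frac{1}{\lam_1}d'-A\leq d\leq\lam_2 d'+A$ with $\lam_1\lam_2$ close to $1$, and it is Lemma \ref{normalizedcriticalexponent} (via the critical-exponent normalization) that lets one take $\lam_1,\lam_2$ individually close to $1$, which is what the interpolation estimate
\begin{equation*}
td+(1-t)d_0\leq \max\left(\tfrac{t\Lam(\rho,\rho')}{s},\tfrac{1-t}{1-s}\right)\bigl(sd'+(1-s)d_0\bigr)+tC
\end{equation*}
actually requires. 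With that one change your argument coincides with the paper's proof.
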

\begin{proof}
Fix $\rho_0\in \scrD(G)$ and a pseudometric $d_0\in \hat{\rho}_0$. We will construct a map $H:\scrD(G)\times [0,1]\ra \scrD(G)$ which is constant at $t=0$ and the identity at $t=1$ as follows. For each $\rho\in \scrD(G)$ and $t\in [0,1]$, consider some $d\in \hat\rho$, and let $d_t:=td+(1-t)d_0$. By Lemma \ref{sumhypishyp}, $d_t\in \calD(G)$, and we have a metric structure $[d_t]\in \scrD(G)$. Note that if $d$ and $d'$ are roughly isometric, $d_t$ and $d'_t$ are also roughly isometric, so that $H(\rho,t):=[d_t]$ is well-defined. 

It easily follows that $H(\rho,0)=\rho_0$ and $H(\rho,1)=\rho$ for all $\rho$, so we are only left to check that $H$ is continuous. To do this, let $\rho \in \scrD(G)$ and consider $0<t<1$. For any $\rho' \in \scrD(G), d'\in \hat\rho'$ and $0<s<1$ we have
\begin{align*}
    d_t=td+(1-t)d_0\leq t\Lam(\rho,\rho')d'+(1-t)d_0+tC\leq \max\left(\frac{t\Lam(\rho,\rho')}{s},\frac{1-t}{1-s}\right)d'_s+tC,
\end{align*}
and 
\begin{align*}
    d_t\geq t\Lam(\rho,\rho')^{-1}d'+(1-t)d_0+tC\geq \min\left(\frac{t}{s\Lam(\rho,\rho')},\frac{1-t}{1-s}\right)d'_s-tC,
\end{align*}
for some $C$ independent of $s$ and $t$. This implies that if $\Del(\rho',\rho) \to 0$ and $s \to t$, then $\Del(H(\rho',s),H(\rho,t))\to 0$. The cases where $t=0$ or $t=1$ are similar, and are left to the reader.
\end{proof}

\begin{prop}\label{unbounded}
$\scrD(G)$ is unbounded.
\end{prop}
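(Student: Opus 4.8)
$\scrD(G)$ is unbounded.

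The goal is to exhibit a sequence of metric structures $(\rho_n)_n$ in $\scrD(G)$ with $\Del(\rho_0, \rho_n) \to \infty$ for some fixed base point $\rho_0$. The introduction gives a strong hint: the author says unboundedness is established "by finding metrics that (almost) kill arbitrarily large powers of some infinite order element of $G$, with some similarity to group theoretical Dehn filling." Since $\Del = \log \Lam$ and, by Proposition \ref{Lambda=DilDil}, $\Lam(\rho,\rho') = \Dil(d,d')\Dil(d',d)$ where the dilations are computed from marked length spectra, the plan is to make the ratio $\ell_d(x)/\ell_{d'}(x)$ blow up along a suitable element $x$.

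My plan is as follows. Fix a base metric structure $\rho_0 = [d_0]$, say coming from a word metric, and fix an infinite order element $g \in G$ (which exists since $G$ is non-elementary, hence infinite). The key step is to construct, for each $n$, a pseudometric $d_n \in \calD(G)$ that is still hyperbolic, left-invariant, and quasi-isometric to a word metric, but for which the translation length $\ell_{d_n}(g^{k})$ of large powers of $g$ is drastically shortened relative to $d_0$. The cleanest way to manufacture such a pseudometric is via a quotient-type or "cone-off" construction: pass to an equivariant action of $G$ on a hyperbolic space where a large power $g^{N}$ is (almost) collapsed, analogous to Dehn filling, and pull back the resulting pseudometric. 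Concretely, I would look for an action of $G$ on some $\del$-hyperbolic space in which $\ell_{d_n}(g) $ is controlled while $\ell_{d_n}$ on a transverse element stays comparable to $d_0$; this forces the dilation $\Dil(d_0, d_n) = \sup_x \ell_{d_0}(x)/\ell_{d_n}(x) \geq \ell_{d_0}(g)/\ell_{d_n}(g)$ to grow without bound as $n \to \infty$. By Proposition \ref{Lambda=DilDil}, $\Lam(\rho_0,\rho_n) \geq \Dil(d_0,d_n) \to \infty$, so $\Del(\rho_0,\rho_n) \to \infty$ and $\scrD(G)$ is unbounded.

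The main obstacle, and where I expect the real work to lie, is verifying that the shortened pseudometric $d_n$ genuinely stays in $\calD(G)$: it must remain hyperbolic and, crucially, quasi-isometric to a word metric (so that it is not degenerate and represents a legitimate metric structure). Collapsing a power of $g$ naively can destroy the quasi-isometry type or the properness of the action, so the construction must shorten $g$ only in a bounded, controlled, left-invariant way — for instance by passing to a new generating set or by adding a bounded equivariant perturbation that leaves the large-scale geometry of directions transverse to $\langle g \rangle$ intact. I would control hyperbolicity using the tools already assembled in the preliminaries (quasiruler criteria as in Lemma \ref{sumhypishyp}, and Proposition \ref{strongqi} for the interaction of quasi-isometries with Gromov products), and I would verify the lower bound on the dilation purely at the level of marked length spectra, since that is exactly the invariant that $\Lam$ sees. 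Once $d_n \in \calD(G)$ is confirmed and $\ell_{d_n}(g) \to 0$ (or stays bounded while $\ell_{d_0}(g^{?})$ is forced up along a rescaling), the divergence of $\Del$ is immediate.

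An alternative and perhaps more elementary route, avoiding Dehn filling altogether, is to exploit the additive structure of $\calD(G)$ that was used for contractibility: given a second metric structure $\rho_1 = [d_1]$ distinct from $\rho_0$, one can consider convex-type combinations or rescalings and examine whether $\Del$ along such families is already unbounded; however, since $\Del$ is a genuine metric and rescaling a single $d$ does not change its class, this simpler idea only works if one can produce two metric structures realizing different ratios on different elements, which again reduces to separating the marked length spectrum along some $g$. For this reason I expect the honest proof to require the explicit Dehn-filling-style construction of a sequence $(d_n)_n$, and the decisive lemma will be the one certifying $d_n \in \calD(G)$ together with the quantitative estimate $\ell_{d_0}(g)/\ell_{d_n}(g) \to \infty$.
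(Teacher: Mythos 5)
Your high-level strategy is the right one -- fix an infinite-order element $x$, produce a sequence $d_n\in\calD(G)$ with $\ell_{d_n}(x)$ small relative to a base metric, and conclude via Proposition \ref{Lambda=DilDil} that the dilation, hence $\Del$, blows up. But the proposal stops exactly where the proof has to happen: you never actually construct the $d_n$. You propose a cone-off/Dehn-filling--style construction and then correctly observe that ``the main obstacle, and where I expect the real work to lie, is verifying that the shortened pseudometric $d_n$ genuinely stays in $\calD(G)$'' -- and that verification is left entirely open. A genuine cone-off would change the quasi-isometry type (indeed would typically produce a metric quasi-isometric to a proper quotient of $G$, which is not in $\calD(G)$ at all), so the ``bounded, controlled, left-invariant perturbation'' you allude to is not a detail but the whole content of the proposition, and no candidate for it is given.

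The paper's construction is far more elementary and avoids every one of these difficulties: take $S_n=S\cup\{x^n,x^{-n}\}$ for a fixed finite generating set $S$, and let $d_n$ be the word metric $d_{S_n}$ rescaled by its critical exponent. Word metrics with respect to finite generating sets are automatically in $\calD(G)$, so there is nothing to verify there; and $|x^n|_{S_n}=1$ gives $\ell_{d_{S_n}}(x)\le 1/n$, while $h(d_{S_n})$ stays bounded above, so after normalizing to critical exponent $1$ one still has $\ell_{d_n}(x)\to 0$. Lemma \ref{normalizedcriticalexponent} then gives $\Dil(d_n,d_1)\ge 1$, so $\Lam(\rho_1,\rho_n)\ge \Dil(d_1,d_n)\ge \ell_{d_1}(x)/\ell_{d_n}(x)\to\infty$. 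Note also that the normalization step is not optional: $\Lam$ is the \emph{product} of the two dilations and is scale-invariant, so shrinking $\ell(x)$ alone proves nothing unless you control the critical exponent (or the reverse dilation) along the sequence -- you gesture at this (``or stays bounded while \ldots is forced up along a rescaling'') but do not resolve it. As written, the proposal is a plausible plan with the decisive construction and its two key estimates missing.
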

\begin{proof}
Given $x\in G$ an infinite order element, it is enough to construct a sequence $(\rho_n)_n\subset \scrD(G)$ and $d_n\in \hat\rho_n$ such that $\ell_{d_n}(x)\to 0$ as $n\to \infty$.
To do this, fix $S$ a finite generating set for $G$, and for $x$ as above consider the sequence of sets $S_n=S\cup \corchete{x^n,x^{-n}}$ and define $\rho_n$ as the rough similarity class of $d_{S_n}$. If $d_n=h(d_{S_n})d_{S_n}$, for all $n$ we have $h(d_n)=1$ and 
$\ell_{d_n}(x)=h(d_{S_n})\ell_{d_{S_n}}(x)\leq \frac{h(d_{S_n})|x^n|_{S_n}}{n}\leq \log(|S|+1)/n$, and hence $\ell_{d_n}(x)$ tends to $0$ and $\Del(\rho_n,\rho_1)$ tends to infinity as $n$ tends to infinity.
\end{proof}


\section{Properness of $\scrD_\del(G)$ and $\scrD_{\del,\al}(G)$}\label{secproperness}
In this section we prove Theorem \ref{proper}, divided into Propositions \ref{complete}, \ref{propersecond} and \ref{boundimpliesroughgeod}. We start with the simple fact that any metric structure can be approximated by word metrics. Recall that for $S$ a finite symmetric generating set for $G$, the word metric on $G$ with respect to $S$ is given by
$$d_S(x,y)=|x^{-1}y|_S,$$
where $|w|_S$ denotes the minimal $k$ such that $w=s_1\cdots s_k$ with $s_i\in S$, and the convention that $|1|_S=0$. The next lemma is a variation of \cite[Lem.~4.6]{BessonCourtoisGallotSambusetti2021Finitenesshyp}.
\begin{lemma}\label{worddense}
$d\in \calD(G)$ be an $\al$-roughly geodesic pseudometric. Then for all $n>\al+1$ such that $S_n=B_d(n)$ generates $G$ we have
\begin{equation*}
(n-1-\al)d_{S_n}(x,y)-(n-1)\leq d(x,y)\leq nd_{S_n}(x,y)
\end{equation*}
for all $x,y\in G$.
\end{lemma}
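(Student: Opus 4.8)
The plan is to reduce everything to the case $y=1$ using left-invariance, and then to treat the two inequalities separately. Setting $w=x^{-1}y$, left-invariance gives $d(x,y)=d(w,1)$ and $d_{S_n}(x,y)=|w|_{S_n}$, so the claim becomes
\[
(n-1-\al)|w|_{S_n}-(n-1)\le d(w,1)\le n|w|_{S_n}.
\]
Before starting I would record that $S_n=B_d(n)$ is symmetric and contains $1$ (since $d(g,1)=d(g^{-1},1)$ by left-invariance), so that $d_{S_n}$ is a genuine word metric whenever $S_n$ generates $G$; the case $w=1$ is trivial, so I assume $w\neq 1$.

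The upper bound is routine. Writing $w=s_1\cdots s_k$ as a geodesic word with $k=|w|_{S_n}$ and each $s_i\in S_n$, left-invariance and the triangle inequality give
\[
d(w,1)\le \sum_{i=1}^{k} d(s_1\cdots s_i,\,s_1\cdots s_{i-1})=\sum_{i=1}^{k} d(s_i,1)\le kn,
\]
where $d(s_i,1)\le n$ because $s_i\in B_d(n)$; this is exactly $d(w,1)\le n|w|_{S_n}$.

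The heart of the argument is the lower bound, which exploits the $\al$-rough geodesic hypothesis. I would fix an $\al$-rough geodesic $1=x_0,x_1,\dots,x_m=w$, so that $|j-i|-\al\le d(x_i,x_j)\le |j-i|+\al$ for all $i,j$; in particular $d(w,1)=d(x_m,x_0)\ge m-\al$. I would then extract a greedy subsequence of checkpoints: set $i_0=0$ and, given $i_\ell$, let $i_{\ell+1}$ be the largest index $\le m$ with $d(x_{i_\ell},x_{i_{\ell+1}})\le n$. Since $d(x_{i_\ell},x_{i_\ell+1})\le 1+\al<n$ (using $n>\al+1$), each step strictly advances the index, so the process reaches $m$ after some number $p$ of steps. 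Each consecutive pair satisfies $x_{i_\ell}^{-1}x_{i_{\ell+1}}\in B_d(n)=S_n$, so telescoping $w=\prod_{\ell}(x_{i_\ell}^{-1}x_{i_{\ell+1}})$ yields $|w|_{S_n}\le p$.

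To control $p$ I would use maximality: for each of the first $p-1$ steps, $i_{\ell+1}<m$, so the index $i_{\ell+1}+1$ violates the constraint, giving $n<d(x_{i_\ell},x_{i_{\ell+1}+1})\le (i_{\ell+1}+1-i_\ell)+\al$ and hence $i_{\ell+1}-i_\ell>n-1-\al$. Summing over those steps (and discarding the positive last step) gives $m>(p-1)(n-1-\al)$. Combining $d(w,1)\ge m-\al$ with $p\ge |w|_{S_n}$ and $n-1-\al>0$,
\[
d(w,1)\ge m-\al>(p-1)(n-1-\al)-\al\ge(|w|_{S_n}-1)(n-1-\al)-\al=(n-1-\al)|w|_{S_n}-(n-1),
\]
which is the desired estimate. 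The main obstacle is precisely this lower bound, and specifically arranging the greedy selection so the arithmetic closes to the sharp constants $n-1-\al$ and $n-1$; the delicate points are that maximality of each index gap must be converted into a lower bound via the \emph{upper} rough-geodesic estimate $d(x_i,x_j)\le|j-i|+\al$, that the final step need not obey this bound (so only $p-1$ terms contribute), and that $n>\al+1$ is used twice—once to guarantee the greedy step advances, and once to make $n-1-\al>0$ so that $p\ge|w|_{S_n}$ transfers correctly through the inequality.
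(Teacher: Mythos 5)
Your proof is correct and follows essentially the same strategy as the paper's: the upper bound via the triangle inequality over a geodesic word, and the lower bound by chopping an $\al$-rough geodesic into pieces lying in $B_d(n)$ and counting them. The only cosmetic difference is that you select the checkpoints greedily and use maximality to bound the number of pieces, whereas the paper subdivides at fixed intervals of length $\lfloor n-\al\rfloor$; both yield the same constants.
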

\begin{proof}
Let $n> \al+1$ be such that $S_n$ generates $G$. If $x,y\in G$ are such that $d(x,y)_{S_n}=k>0$, then $x^{-1}y=x_1\cdots x_k$ with $x_i\in S_n$, and hence $d(x,y)\leq d(x_1,1)+\cdots d(x_k,1)\leq nk=nd_{S_n}(x,y)$. This proves the second inequality. 

For the first inequality, let $x\in G$ and consider an $\al$-rough geodesic sequence $1=x_0,\dots,x_l=x$, so that $|i-j|-\al\leq d(x_i,x_j)\leq |i-j|+\al$ for all $i,j$. Let $m$ be the greatest integer with $m+\al\leq n$ (note that $m$ is positive), and say $l=mk+r$ with $k,r$ integers such that $0\leq r<m$. If we define $y_j=x^{-1}_{m(j-1)}x_{mj}$ for $1\leq j \leq k$ and $y_{k+1}=x^{-1}_{mk}x$, then $x=y_1\cdots y_{k+1}$, and each $y_j$ lies in $B_d(n)$, so that $|x|_{S_n}\leq k+1$. This implies
\begin{align*}
    d(x,1)  \geq l-\al\geq mk-\al &\geq (n-1-\al)(d_{S_n}(x,1)-1)-\al \\ & =(n-1-\al)d_{S_n}(x,1)-(n-1),
\end{align*}
and the first inequality then follows from left-invariance of $d$ and $d_{S_n}$.
\end{proof}
As there are only countably many word metrics, the previous lemma already implies that ($\scrD(G),\Del)$ is separable. Another consequence is that when $G$ is torsion-free, for an arbitrary metric $d\in \calD(G)$, its translation length $\ell_d$ can be continuously extended to $\Curr(G)$. This was proved by Erlandsson, Parlier and Souto in case $d$ comes from a geometric action on a geodesic metric space \cite[Thm.~1.5]{ErlandssonParlierSouto2020Stablelengthhyp}. 

\begin{coro}\label{mlsctns}
If $G$ is torsion-free, then for any $d\in \calD(G)$, its translation length $\ell_d$ extends uniquely to a continuous homogeneous function $\ell_d:\Curr(G)\ra \R$.
\end{coro}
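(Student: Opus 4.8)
The plan is to reduce to the theorem of Erlandsson, Parlier and Souto by replacing the (possibly non-geodesic) metric space $(G,d)$ with a geodesic hyperbolic space carrying a geometric $G$-action with the same translation lengths. Since $G$ is torsion-free, Remark \ref{torsionfree} guarantees that $d$ is a genuine metric, and because $d$ is quasi-isometric to a word metric the balls $B_d(R)$ are finite; thus $(G,d)$ is a proper metric space on which $G$ acts by isometries via left multiplication, freely and cocompactly.

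First I would pass to the injective hull $(\hat{X},\hat{d})$ of $(G,d)$ given by Proposition \ref{prop.injhull}: it is geodesic and $\del$-hyperbolic, the embedding $i:(G,d)\to(\hat{X},\hat{d})$ is $G$-equivariant and isometric, and the $G$-action extends to an isometric action on $\hat{X}$. I would then check that this extended action is geometric. Taking the basepoint $o=i(1)$, equivariance and isometry of $i$ give $\hat{d}(g\cdot o,o)=d(g,1)$, so $\{g\in G:\hat{d}(g\cdot o,o)\le R\}=B_d(R)$ is finite and the action is metrically proper; cocompactness follows because the orbit $i(G)$ is coarsely dense in $\hat{X}$ \cite{Lang2013injectivehull}, which also shows that $\hat{X}$ is proper. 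In particular $i$ is a quasi-isometry, so it induces a $G$-equivariant homeomorphism $\partial G\cong\partial\hat{X}$ (using $\partial(G,d)=\partial G$, as $d$ is quasi-isometric to a word metric), and hence an identification of the space of geodesic currents of $\hat{X}$ with $\Curr(G)$.

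Next I would match translation lengths: for $g\in G$ the translation length of $g$ as an isometry of $\hat{X}$ equals $\lim_{n\to\infty}\hat{d}(g^n\cdot o,o)/n=\lim_{n\to\infty}d(g^n,1)/n=\ell_d(g)$, since $i$ is equivariant and isometric. Applying \cite[Thm.~1.5]{ErlandssonParlierSouto2020Stablelengthhyp} to the geometric action of $G$ on the geodesic hyperbolic space $(\hat{X},\hat{d})$ then yields a continuous, homogeneous function $\Curr(G)\to\R$ whose value on the rational current attached to an infinite order $g$ is the translation length $\ell_d(g)$; under the identification above this is the desired continuous homogeneous extension of $\ell_d$.

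Finally, uniqueness would follow from the density in $\Curr(G)$ of the positive multiples of the rational currents $\eta_g$ attached to infinite order elements $g\in G$, together with continuity: any two continuous homogeneous extensions agree on these currents, where their common value is forced to be $\ell_d(g)$, and hence they agree on all of $\Curr(G)$. The main obstacle is the middle step: confirming that the $G$-action on the injective hull is genuinely geometric (metric properness via the basepoint computation is immediate, but cocompactness and properness of $\hat{X}$ rest on the coarse density of the orbit established by Lang) and that the induced identification of boundaries turns the length function produced by Erlandsson--Parlier--Souto into a function on $\Curr(G)$ that restricts to $\ell_d$ on $G$.
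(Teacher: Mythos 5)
Your route is genuinely different from the paper's, and it hinges on a step that your citation does not actually cover. The paper never tries to realize $(G,d)$ as a geometric action on a single geodesic space; instead it invokes \cite[Thm.~1.5]{ErlandssonParlierSouto2020Stablelengthhyp} only for word metrics (where the Cayley graph does the job), approximates an arbitrary $d$ multiplicatively by rescaled word metrics $\lambda_n d_{S_n}$ with $\Lambda(d,d_{S_n})\to 1$ via Lemma \ref{worddense}, and then uses Lemma \ref{normalizedcriticalexponent} to get $\Lambda_n^{-1}\ell_{\lambda_nd_n}\leq \ell_d\leq \Lambda_n\ell_{\lambda_nd_n}$, so that the logarithms of the already-extended functions converge uniformly on $\Curr(G)$; uniqueness comes from density of rational currents. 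This completely sidesteps any structural analysis of the injective hull.

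The gap in your argument is precisely the point you flag as ``the main obstacle'': coarse density of $i(G)$ in $\hat X$ and properness of $\hat X$ are not established by Proposition \ref{prop.injhull}, and Lang's coarse-density statement requires the input space to be \emph{geodesic} -- it fails for general hyperbolic metric spaces (the injective hull of a two-point space $\{x,y\}$ with $d(x,y)=R$ is an interval of length $R$, whose midpoint is at distance $R/2$ from the image, even though the space is $0$-hyperbolic). Since a general $d\in\calD(G)$ is not geodesic, you cannot quote Lang directly; you would first have to note that $(G,d)$ is $\al$-roughly geodesic (Lemma \ref{lem.roughgeodfromhyp}), then show that every extremal function lies near a geodesic of $\hat X$ between two orbit points (Lang's characterization of extremal functions plus hyperbolicity of $\hat X$) and upgrade this to coarse density of the orbit via the Morse lemma applied to the quasigeodesics of $(G,d)$ sitting inside $\hat X$. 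Properness of $\hat X$ is a further issue: Lang proves it for word metrics through the polyhedral structure of $E(\Gamma)$ for integer-valued metrics, and coarse density of a proper orbit does not by itself make $\hat X$ proper, so if the Erlandsson--Parlier--Souto theorem genuinely needs a compact quotient (rather than a cobounded action) you have an additional, unaddressed hypothesis to verify. These claims are all plausibly true for $d\in\calD(G)$, but as written the proof asserts rather than proves them, and that is exactly where the work lies.
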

\begin{proof}
By \cite[Thm.~1.5]{ErlandssonParlierSouto2020Stablelengthhyp}, the result follows if $d$ is a word metric. For arbitrary $d\in \calD(G)$, let $\lambda_nd_n\in \calD(G)$ be a sequence of word metrics on $G$ such that $\Lambda_n:=\Lambda(d,d_n)\to 1$ as $n\to \infty$, normalized so that $h(\lambda_nd_n)=h(d)$. By Lemma \ref{normalizedcriticalexponent} we have ${\Lambda_n}^{-1}\ell_{\lambda_nd_n}\leq \ell_d\leq \Lambda_n\ell_{\lambda_nd_n}$ for all $n$, and hence the functions $\log\ell_{\lambda_nd_n}:\Curr(G)\ra \R$ converge uniformly to some function $f$. Continuity of each $\ell_{d_n}$ then guarantees the continuity of $e^f$, which is the desired homogeneous extension of $\ell_d$ to $\Curr(G)$. Uniqueness is deduced from the density of rational currents \cite[Thm.~7]{Bonahon1991currentsnegcurv}.
\end{proof}

\begin{rmk}
The corollary above implies that when $G$ is torsion-free, there exists a left-invariant metric $d_0$ on $G$, quasi-isometric to a word metric, for which $\ell_{d_0}$ does not extend continuously to $\Curr(G)$ (in particular, $d_0$ is not hyperbolic, cf.~\cite[Prop.~A.11]{BlachereHaissinskyMathieu2011HarmvsQconf}). Indeed, let $\phi:G \ra \ov{G}$ be a non-elementary hyperbolic quotient of $G$ with infinite kernel, which can be constructed from group theoretic Dehn filling \cite{GrovesManning2008DehnFilling,Osin2007Perfilling}. Given any metric $d\in \calD(\ov{G})$, consider the pseudometric  $\ov{d}(x,y)=d(\phi(x),\phi(y))$ on $G$. The same  argument as in the proof of \cite[Prop.~11]{Bonahon1991currentsnegcurv} implies that the marked length spectrum $\ell_{\ov{d}}$ does not extend continuously to $\Curr(G)$. Therefore, for $d_1\in \calD(G)$, the metric $d_0=\ov{d}+d_1$ is left-invariant and quasi-isometric to the word metric, and by Corollary \ref{mlsctns} its marked length spectrum $\ell_{d_0}=\ell_{\ov{d}}+\ell_{d_1}$ does not extend continuously to $\Curr(G)$. 
\end{rmk}

To prove the properness of $\scrD_\del(G)$ and $\scrD_{\del,\al}(G)$, we will need to find a convergent subsequence for a sequence of hyperbolic pseudometrics representing a bounded sequence of metric structures. To guarantee existence and good properties of the limit pseudometric, we need some way to choose well-behaved representatives for the metric structures. The existence of such good pseudometrics is ensured by the following Bochi-type inequality for isometries of hyperbolic spaces due to Breuillard and Fujiwara \cite[Thm.~1.4]{BreuillardFujiwara2021JSRnonpos}. They proved this result when $(X,d)$ is metric and geodesic, but their proof can be adapted to the case when $(X,d)$ is just pseudometric and $\al$-rough geodesic. Details are left to the reader. 

\begin{thm}[Breuillard-Fujiwara]\label{thmbrefuj}
For all $\del,\al\geq 0$ there exists a positive constant $K(\del,\al)$ such that if $(X,d)$ is a $\del$-hyperbolic, $\al$-rough geodesic pseudometric space and $S\subset \Isom(X)$ is a finite set of isometries, then
\begin{equation*}
\inf_{x\in X}{\max_{s\in S}{d(sx,x)}}\leq K(\del,\al)+\frac{1}{2}{\max_{s_1,s_2\in S}{\ell_d(s_1s_2)}}.
\end{equation*}
\end{thm}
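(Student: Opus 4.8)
The plan is to prove the single inequality that constitutes the theorem by exhibiting one length-two product with large translation length. Abbreviate $L:=\inf_x\max_{s\in S}d(sx,x)$ and $M:=\max_{s_1,s_2\in S}\ell_d(s_1s_2)$; I would show there is a constant $K=K(\del,\al)$ and a pair $s_1,s_2\in S$ with $\ell_d(s_1s_2)\geq 2L-2K$, which rearranges to the asserted bound. (The reverse estimate $M\leq 2L$ is elementary and not needed.) Fix a small $\ep>0$ and a near-optimal basepoint $x_0$, so that $D_s:=d(sx_0,x_0)\leq L+\ep$ for all $s\in S$ while $\max_s D_s\geq L$; for each $s$ record its \emph{backtracking} $\beta_s:=\groprod{sx_0}{s^{-1}x_0}{x_0}$, which measures how much the two translates $sx_0$ and $s^{-1}x_0$ share a direction seen from $x_0$.

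Two estimates drive the argument. The first is the exact identity, valid for any isometries $a,b$ purely because they preserve $d$,
\begin{equation*}
d(ab\,x_0,x_0)=d(ax_0,x_0)+d(bx_0,x_0)-2\groprod{a^{-1}x_0}{bx_0}{x_0},
\end{equation*}
obtained by applying $a^{-1}$ to get $d(ab\,x_0,x_0)=d(bx_0,a^{-1}x_0)$ and expanding the Gromov product. The second is the standard hyperbolic lower bound for translation length in terms of displacement and backtracking: there is $C_0=C_0(\del,\al)$ with
\begin{equation*}
\ell_d(g)\geq d(gx_0,x_0)-2\groprod{g^{-1}x_0}{gx_0}{x_0}-C_0\del
\end{equation*}
for every isometry $g$. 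I would prove the latter by observing that the orbit $(g^kx_0)_k$ has all of its consecutive Gromov products equal to $\groprod{g^{-1}x_0}{gx_0}{x_0}$ by equivariance, so when this quantity is small the orbit is a uniform quasigeodesic and Proposition \ref{strongqi}, together with the local-to-global property of quasigeodesics, forces $\ell_d(g)$ to be within an additive constant of $d(gx_0,x_0)$. This is the only place the $\al$-rough-geodesic hypothesis enters, and it accounts for the dependence of $K$ on $\al$.

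The heart of the proof is to use near-optimality of $x_0$ to locate a pair whose product nearly realizes the sum $D_{s_1}+D_{s_2}\approx 2L$ in the first identity. The point is that $x_0$ is an approximate minimizer of $f(x)=\max_s d(sx,x)$, and each displacement function $F_s=d(s\cdot,\cdot)$ decreases, coarsely, only when one moves $x_0$ toward the shared initial segment of geodesics to $sx_0$ and $s^{-1}x_0$. Optimality forbids a direction that simultaneously decreases every near-maximal $F_s$, which coarsely balances the translates $\corchete{sx_0}\cup\corchete{s^{-1}x_0}$ around $x_0$: they cannot lie in a single half-space. I would turn this into the existence of two of these translates with Gromov product at most a constant $K_1(\del)$, both at distance at least $L-K_1$ from $x_0$. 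Since for each $s$ the two translates $sx_0,s^{-1}x_0$ either point in nearly the same direction (large $\beta_s$) or are themselves nearly antipodal (small $\beta_s$), this near-antipodal pair can be taken of the form $\corchete{s_1^{-1}x_0,s_2x_0}$ with $s_1,s_2\in S$ (possibly $s_1=s_2$, the case of a single small-backtracking element and its square). Feeding $\groprod{s_1^{-1}x_0}{s_2x_0}{x_0}\leq K_1$ into the first identity gives $d(s_1s_2x_0,x_0)\geq 2L-2K_1-2\ep$.

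The main obstacle is the final upgrade from a large product \emph{displacement} to a large product \emph{translation length}: a product of two elliptic-type isometries can displace $x_0$ by nearly $2L$ while having a short axis, because its orbit backtracks. To rule this out I would show that the near-antipodal configuration forces $x_0$ to lie within a distance depending only on $\del$ of an axis of $s_1s_2$, equivalently that $\groprod{(s_1s_2)^{-1}x_0}{s_1s_2x_0}{x_0}$ is bounded by a constant $K_2(\del)$; this is the coarse analogue of the fact that two isometries whose combined displacement is nearly additive at $x_0$ generate a hyperbolic isometry with axis through $x_0$, and it is established by the same $\del$-hyperbolic quasigeodesic estimates underlying the second inequality above. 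Applying that inequality to $g=s_1s_2$ then yields $\ell_d(s_1s_2)\geq d(s_1s_2x_0,x_0)-2K_2-C_0\del\geq 2L-2K$ for a suitable $K=K(\del,\al)$, as required. Throughout, the passage to pseudometric, $\al$-rough-geodesic spaces changes nothing essential: each use of a geodesic is replaced by an $\al$-rough geodesic and the resulting bounded errors are absorbed into $K(\del,\al)$.
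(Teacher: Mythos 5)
A preliminary remark: the paper does not prove this statement at all --- it is imported from Breuillard--Fujiwara \cite{BreuillardFujiwara2021JSRnonpos}, and the only content the paper adds is the observation that their proof (written for geodesic metric spaces) adapts to $\al$-rough geodesic pseudometric spaces, with details left to the reader. So you are reconstructing the cited black box rather than an argument in the paper. Your reconstruction does follow the right strategy, and your two basic estimates are sound; in fact the second one needs no quasigeodesic argument, since $\ell_d(g)\geq d(g^2x_0,x_0)-d(gx_0,x_0)-2\del=d(gx_0,x_0)-2\groprod{gx_0}{g^{-1}x_0}{x_0}-2\del$ holds in any $\del$-hyperbolic pseudometric space. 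This also means you have misplaced where $\al$ enters: it is not needed for that inequality, but it is needed in the half-space step, where you must produce a point at prescribed distance $t$ from $x_0$ lying roughly in the direction of an orbit point; so $K_1$ should depend on $\al$ as well as $\del$.

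The genuine gap is the final upgrade, which you correctly flag as the main obstacle but then resolve with an unjustified claim. Knowing only $\groprod{s_1^{-1}x_0}{s_2x_0}{x_0}\leq K_1$ gives the large displacement $d(s_1s_2x_0,x_0)\geq 2L-2K_1-2\ep$, but it does \emph{not} force $x_0$ to lie near an axis of $s_1s_2$. When the displacement of $s_1s_2$ is nearly additive, $s_1s_2x_0$ points in the direction of $s_1x_0$ and $(s_1s_2)^{-1}x_0$ in the direction of $s_2^{-1}x_0$, so the backtracking $\groprod{s_1s_2x_0}{(s_1s_2)^{-1}x_0}{x_0}$ is controlled, up to $O(\del)$, by the \emph{other} cross product $\groprod{s_1x_0}{s_2^{-1}x_0}{x_0}$, which you have not bounded. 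This is not vacuous: on a tripod with center $x_0$, if $s_1$ translates branch $1$ to branch $2$ and $s_2$ translates branch $2$ to branch $3$, then $\groprod{s_1^{-1}x_0}{s_2x_0}{x_0}=0$ while $\groprod{s_1x_0}{s_2^{-1}x_0}{x_0}=L$, and $s_1s_2$ has displacement $2L$ at $x_0$ but backtracks completely, so its displacement says nothing about $\ell_d(s_1s_2)$. The repair is already latent in your sketch but is never deployed at the point where it is needed: after disposing of any $s\in S$ with small self-backtracking $\beta_s$ (for which $\ell_d(s^2)=2\ell_d(s)\geq 2L-O(\del+\al)$ directly), you may assume $\beta_{s_1},\beta_{s_2}>K_1+2\del$, and then the chain inequality
\begin{equation*}
\groprod{s_1^{-1}x_0}{s_2x_0}{x_0}\;\geq\;\min\left(\beta_{s_1},\,\groprod{s_1x_0}{s_2^{-1}x_0}{x_0},\,\beta_{s_2}\right)-2\del
\end{equation*}
forces $\groprod{s_1x_0}{s_2^{-1}x_0}{x_0}\leq K_1+2\del$ as well. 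Only with \emph{both} cross products small is $s_1s_2$ guaranteed an axis coarsely through $x_0$, and only then does your final application of the translation-length lower bound close the argument. As written, the appeal to ``the same quasigeodesic estimates'' papers over precisely this point.
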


Given $\delta,\al\geq 0$, recall that $\scrD_{\del,\al}(G)\subset \scrD(G)$ is the subset of all such metric structures represented by a $\del$-hyperbolic and $\al$-rough geodesic pseudometric with critical exponent 1. The next lemma will be used to find good representatives for metric structures.

\begin{lemma}\label{lem.unifquasiisom}
Let $S\subset G$ be a finite symmetric generating set, and $B\subset \scrD_{\del,\al}(G)$ a bounded subset. Then there exists some $C\geq 1$ depending only on $B$ and $S$, such that for every $\rho\in B$ there exists a $\del$-hyperbolic and $\al$-rough geodesic pseudometric $d_\rho\in \hat\rho$ satisfying
\begin{equation}\label{eq.unifquasigeod}
    C^{-1}d_S(x,y)-C\leq d_\rho(x,y)\leq Cd_S(x,y)
\end{equation}
for all $x,y \in G$.
\end{lemma}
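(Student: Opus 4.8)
The plan is to produce, for each $\rho\in B$, a representative obtained from an arbitrary $\del$-hyperbolic, $\al$-rough geodesic $d'\in\hat\rho$ by a suitable \emph{change of basepoint}, and then to control all the relevant constants uniformly using the boundedness of $B$ together with Breuillard--Fujiwara's inequality (Theorem \ref{thmbrefuj}). First I would record the basepoint trick: given $d'\in\hat\rho$ and $x_0\in G$, the pseudometric $d''(a,b):=d'(ax_0,bx_0)$ is again left-invariant, and $a\mapsto ax_0$ is an isometry $(G,d'')\to(G,d')$; hence $d''$ is again $\del$-hyperbolic and $\al$-rough geodesic, and since stable translation lengths are conjugation-invariant one has $\ell_{d''}=\ell_{d'}$, so $d''\in\hat\rho$ as well. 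The point is that $d''(s,1)=d'(sx_0,x_0)$, so choosing $x_0$ within $1$ of the infimum in Theorem \ref{thmbrefuj} (applied to the finite set of left translations $\{L_s:s\in S\}\subset\Isom(G,d')$, whose products have $\ell_{d'}(L_{s_1}L_{s_2})=\ell_{d'}(s_1s_2)$) yields a representative $d_\rho:=d''$ with
$$\max_{s\in S}d_\rho(s,1)\;\le\;K(\del,\al)+\tfrac12\max_{s_1,s_2\in S}\ell_{d'}(s_1s_2)+1.$$
Writing $x^{-1}y$ as a product of $d_S(x,y)$ generators, the triangle inequality and left-invariance give $d_\rho(x,y)\le\big(\max_s d_\rho(s,1)\big)\,d_S(x,y)$, so it remains to bound $\max_{s_1,s_2}\ell_{d'}(s_1s_2)$ uniformly in $\rho$.

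For this I would fix a reference $\rho_0\in B$ and $d_0\in\hat\rho_0$, and set $R:=\sup_{\rho\in B}\Del(\rho,\rho_0)<\infty$. By Proposition \ref{Lambda=DilDil} and Lemma \ref{normalizedcriticalexponent} (both representatives have critical exponent $1$), each of $\Dil(d',d_0)$ and $\Dil(d_0,d')$ lies in $[1,e^R]$, since their product is $\Lam(\rho,\rho_0)\le e^R$. In particular $\ell_{d'}(g)\le e^R\ell_{d_0}(g)$ for every $g\in G$, whence $\max_{s_1,s_2}\ell_{d'}(s_1s_2)\le e^R\max_{s_1,s_2}\ell_{d_0}(s_1s_2)=:M$, a constant depending only on $B$ and $S$. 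Combined with the previous paragraph this produces a uniform $C_1\ge1$ with $d_\rho(x,y)\le C_1 d_S(x,y)$, the upper estimate in \eqref{eq.unifquasigeod}.

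For the lower estimate I would transfer the marked-length-spectrum reconstruction of the \emph{fixed} metric $d_0$ to $d_\rho$. Apply Proposition \ref{thmDGLM} to $d_0$ with $\ep=1/2$ to obtain a finite $U_0\subset G$ and $C_0\ge0$ (depending only on $d_0$) with $\tfrac12 d_0(x,1)\le\max_{u\in U_0}\ell_{d_0}(xu)+C_0$. Using $\ell_{d_0}\le e^R\ell_{d'}=e^R\ell_{d_\rho}$, the bound $\ell_{d_\rho}(xu)\le d_\rho(xu,1)\le d_\rho(x,1)+d_\rho(u,1)$, and the already-proven upper estimate to control $\max_{u\in U_0}d_\rho(u,1)\le C_1\max_{u\in U_0}|u|_S$, one gets $\tfrac12 d_0(x,1)\le e^R d_\rho(x,1)+C_2$ for a uniform $C_2$, hence $d_\rho(x,1)\ge\tfrac{e^{-R}}2 d_0(x,1)-e^{-R}C_2$. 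Finally, since $d_0\in\calD(G)$ is quasi-isometric to $d_S$ there are constants $a,b$ (depending only on $d_0$ and $S$) with $d_S(x,1)\le a\,d_0(x,1)+b$; substituting and using left-invariance gives a uniform lower estimate $d_\rho(x,y)\ge\tfrac{e^{-R}}{2a}d_S(x,y)-C_3$. Taking $C:=\max\{C_1,\,2ae^R,\,C_3\}$ then yields both inequalities in \eqref{eq.unifquasigeod}, with $C$ depending only on $B$ and $S$.

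The hard part will be the lower estimate. Proposition \ref{thmDGLM} reconstructs a pseudometric from its length spectrum only up to constants that depend on that very pseudometric, so applying it directly to the varying $d_\rho$ would give no uniformity. The resolution is to apply it once to the fixed reference $d_0$ and to feed the previously established \emph{upper} bound back in, so as to control the finitely many quantities $d_\rho(u,1)$, $u\in U_0$; it is this coupling of the two estimates — all anchored to the boundedness of $B$ through the dilation bound coming from Proposition \ref{Lambda=DilDil} and Lemma \ref{normalizedcriticalexponent} — that forces the constants to be uniform.
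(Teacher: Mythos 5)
Your proof is correct and follows essentially the same route as the paper's: a basepoint change justified by Breuillard--Fujiwara's inequality gives the upper bound, boundedness of $B$ plus Proposition \ref{Lambda=DilDil} and Lemma \ref{normalizedcriticalexponent} give the uniform comparison of length spectra, and Proposition \ref{thmDGLM} applied to a single fixed reference metric (you use $d_0\in\hat\rho_0$, the paper uses $d_S$ directly) combined with the already-established upper bound gives the lower estimate. The only differences are cosmetic: the choice of anchor metric and the explicit ``$+1$'' for near-attainment of the infimum.
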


\begin{proof}
Since $B$ is bounded, there exists some $\Lam \geq 1$ such that for all $\rho\in B$, $d\in \hat{\rho}$ and $x\in G$, we have 
\begin{equation}\label{eq.quasiisomell}
    \Lam^{-1}\ell_S(x)\leq \ell_d(x)\leq \Lam\ell_S(x),
\end{equation}
where $\ell_S=\ell_{d_S}$. Now, for each $\rho \in B$, let $d'_\rho\in \hat{\rho}$ be a $\del$-hyperbolic and $\al$-rough geodesic pseudometric. We want to modify these pseudometrics so that they satisfy \eqref{eq.unifquasigeod} for some $C$ independent of $\rho$. To do this, for any $\rho \in B$ we apply Theorem \ref{thmbrefuj} to the finite set $S$ and the pseudometric $d'_\rho$ to find a point $z_\rho\in G$ such that 
\begin{equation}\label{eq.boundmaxS}
    \max_{s\in S}{d'_\rho(sz_\rho,z_\rho)}\leq K(\del,\al)+\frac{1}{2}\max_{s_1,s_2\in S}{\ell_{d'_\rho}(s_1s_2)} \leq K(\del,\al)+\frac{\Lam}{2}\max_{s_1,s_2\in S}{\ell_S(s_1s_2)},
\end{equation}
where in the last inequality we used \eqref{eq.quasiisomell}. We define $C_1$ as the last term in \eqref{eq.boundmaxS}, which is independent of $\rho\in B$. In this way, the pseudometric $d_\rho(x,y):=d'_\rho(xz_\rho,yz_\rho)$ is also left-invariant, $\del$-hyperbolic and $\al$-rough geodesic, and for each $x,y\in G$ we have 
\begin{equation}\label{eq.upperboundqi}
    d_\rho(x,y)\leq C_1d_S(x,y).
\end{equation}
For the other inequality in \eqref{eq.unifquasigeod}, we apply Proposition \ref{thmDGLM} to $d_S$ and $\ep=1/2$, to find a finite set $U\subset G$ and a constant $C_2\geq 0$ such that 
\begin{equation*}
d_S(x,y)\leq 2\max_{u\in U}{\ell_S(y^{-1}xu)}+C_2
\end{equation*}
for all $x,y\in G$. This last inequality together with \eqref{eq.quasiisomell} and \eqref{eq.upperboundqi} imply that for $\rho \in B$ and all $x,y\in G$
\begin{align*}
     d_\rho(x,y) & \geq \max_{u\in U}{d_\rho(y^{-1}xu,1)} -\max_{u\in U}{d_\rho(u,1)} 
     \\
     & \geq 
     \max_{u\in U}\ell_{d_\rho}(y^{-1}xu)-C_1\max_{u\in U}{d_S(u,1)} \\ 
     & = \max_{u\in U}\ell_{d'_\rho}(y^{-1}xu)-C_1\max_{u\in U}{d_S(u,1)} \\
     & \geq \Lam^{-1}\max_{u\in U}\ell_{S}(y^{-1}xu)-C_1\max_{u\in U}{d_S(u,1)} \\
     & \geq \frac{\Lam^{-1}}{2}(d_S(x,y)-C_2)-C_1\max_{u\in U}{d_S(u,1)} \\
     & = \frac{\Lam^{-1}}{2}d_S(x,y)-\left(\frac{\Lam^{-1}C_2}{2}+C_1\max_{u\in U}{d_S(u,1)}\right),
\end{align*}
and inequality \eqref{eq.unifquasigeod} holds for all $\rho$ in $B$, with $C=\max(C_1,2\Lam,\frac{\Lam^{-1}C_2}{2}+C_1\max_{u\in U}{d_S(u,1)})$.
\end{proof}

Now we begin the proof of Theorem \ref{proper}. Our first step is to prove the completeness of $\scrD_{\del,\al}(G)$.

\begin{prop}\label{complete}
For all $\del,\al\geq 0$ the set $\scrD_{\del,\al}(G)$ is either empty, or a complete subspace of $\scrD(G)$.
\end{prop}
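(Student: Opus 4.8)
The plan is to prove completeness directly: assuming $\scrD_{\del,\al}(G)$ is non-empty, I take a Cauchy sequence $(\rho_n)_n$, build a pointwise limit pseudometric $d_\infty$, and verify that $[d_\infty]$ lies in $\scrD_{\del,\al}(G)$ and is the limit of $(\rho_n)_n$. Since a Cauchy sequence is bounded, $B=\corchete{\rho_n}$ is a bounded subset, so fixing a finite symmetric generating set $S$ and applying Lemma \ref{lem.unifquasiisom}, I obtain representatives $d_n\in\hat\rho_n$, each $\del$-hyperbolic and $\al$-rough geodesic, satisfying $C^{-1}d_S(x,y)-C\leq d_n(x,y)\leq Cd_S(x,y)$ for a uniform $C$. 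For each fixed pair $(x,y)$ the values $d_n(x,y)$ lie in a bounded interval, so as $G$ is countable a diagonal argument extracts a subsequence (still denoted $d_n$) with $d_n(x,y)\to d_\infty(x,y)$ for all $x,y\in G$. Pointwise limits preserve non-negativity, symmetry, the triangle inequality, and vanishing on the diagonal, so $d_\infty$ is a left-invariant pseudometric inheriting $C^{-1}d_S-C\leq d_\infty\leq Cd_S$; thus $d_\infty$ is quasi-isometric to $d_S$. Since the Gromov product is affine in the distances, $\groprod{x}{z}{w,d_\infty}=\lim_n\groprod{x}{z}{w,d_n}$, so inequality \eqref{gromovin} passes to the limit and $d_\infty$ is $\del$-hyperbolic.

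Next I verify that $d_\infty$ is $\al$-rough geodesic. Fixing $x,y\in G$, for each $n$ there is an $\al$-rough geodesic chain $x=p^n_0,\dots,p^n_{m_n}=y$ for $d_n$. The endpoint inequality forces $|m_n-d_n(x,y)|\leq\al$, so the integers $m_n$ stay bounded; moreover $d_n(x,p^n_i)\leq m_n+\al$ is bounded, whence by the lower quasi-isometry bound each $p^n_i$ lies in a fixed finite $d_S$-ball. Therefore the tuples $(m_n;p^n_0,\dots,p^n_{m_n})$ take only finitely many values, so one such tuple recurs for infinitely many $n$; along that sub-subsequence the defining rough-geodesic inequalities, now written with a single fixed tuple, pass to the pointwise limit and produce an $\al$-rough geodesic for $d_\infty$ from $x$ to $y$. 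As $x,y$ were arbitrary, $d_\infty\in\calD(G)$ is $\del$-hyperbolic and $\al$-rough geodesic.

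It remains to show $\Lam(\rho_n,[d_\infty])\to 1$ and $h(d_\infty)=1$; both follow from controlling marked length spectra. By Proposition \ref{Lambda=DilDil}, the Cauchy condition gives $\Dil(d_n,d_m)\Dil(d_m,d_n)=\Lam(\rho_n,\rho_m)\to 1$, and since each factor is $\geq 1$ by Lemma \ref{normalizedcriticalexponent}, both tend to $1$; hence $\log\ell_{d_n}$ is uniformly Cauchy over infinite-order $x$ and $\ell_{d_n}\to\ell_\infty$ uniformly in $\log$. I identify $\ell_\infty$ with $\ell_{d_\infty}$ in two steps: subadditivity gives $\ell_{d_n}(x)\leq d_n(1,x^k)/k$, so letting $n\to\infty$ and then minimizing over $k$ yields $\ell_\infty\leq\ell_{d_\infty}$; conversely the standard hyperbolic stable-length estimate $d_n(1,x^k)\leq k\,\ell_{d_n}(x)+2\groprod{x}{x^{-1}}{1,d_n}+C_\del$, whose correction term is bounded uniformly in $n$ for fixed $x$ by the quasi-isometry bounds, passes to the limit to give $d_\infty(1,x^k)\leq k\,\ell_\infty(x)+M_x$ and hence $\ell_{d_\infty}\leq\ell_\infty$. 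Thus $\ell_\infty=\ell_{d_\infty}$, so $\Dil(d_n,d_\infty)\to 1$ and $\Dil(d_\infty,d_n)\to 1$, i.e. $\Lam(\rho_n,[d_\infty])\to 1$. Finally, extracting near-optimal constants $\lam_1,\lam_2\to 1$ with $d_\infty\leq\lam_1 d_n+A'$ and $d_n\leq\lam_2 d_\infty+A$ (via Lemma \ref{lem.Dilqi}), comparison of ball growth gives $1/\lam_2\leq h(d_\infty)\leq\lam_1$, forcing $h(d_\infty)=1$. Therefore $[d_\infty]\in\scrD_{\del,\al}(G)$, the subsequence converges to it, and since $(\rho_n)_n$ is Cauchy the whole sequence converges to $[d_\infty]$. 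I expect the two delicate points to be the identification $\ell_\infty=\ell_{d_\infty}$ (specifically the lower bound, which genuinely uses hyperbolicity rather than mere pointwise convergence of distances) and the finiteness/pigeonhole argument establishing $\al$-rough geodesicity of the limit.
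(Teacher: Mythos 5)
Your proof is correct and follows essentially the same route as the paper's: uniform representatives via Lemma \ref{lem.unifquasiisom}, a diagonal extraction to a pointwise limit $d_\infty$, and the same three verifications (that $d_\infty\in\calD(G)$ is $\del$-hyperbolic and $\al$-rough geodesic, that the length spectra converge so $\rho_n\to[d_\infty]$, and that $h(d_\infty)=1$), including the same pigeonhole argument for rough geodesicity of the limit. The only differences are in which standard facts are invoked at two sub-steps: for the inequality $\ell_{d_\infty}\leq\liminf_n\ell_{d_n}$ the paper uses the formula $\ell_d(x)=\sup_m\bigl(d(x^{2m},1)-d(x^m,1)-2\del\bigr)/m$ from \cite{OregonReyes18propsetsisom} where you use the equivalent stable-length upper bound with correction term $2\groprod{x}{x^{-1}}{1,d_n}$, and for $h(d_\infty)=1$ the paper uses the Poincar\'e series over conjugacy classes where you compare ball growth directly via Lemma \ref{lem.Dilqi}.
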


\begin{proof}
Assume that $\scrD_{\del,\al}(G)$ is non-empty and let $\rho_n\in \scrD_{\del,\al}(G)$ be a Cauchy sequence of metric structures, for which we expect to converge to some $\rho_\infty\in \scrD_{\del,\al}(G)$. 

The sequence $\rho_n$ is bounded, and hence by Lemma \ref{lem.unifquasiisom} there exists a constant $C\geq 1$ independent of $n$, and $\del$-hyperbolic and $\al$-rough geodesic pseudometrics $d_n\in \hat\rho_n$ such that for each $x,y\in G$ we have 
\begin{equation}\label{upperboundqi}
   C^{-1}d_S(x,y)-C\leq  d_n(x,y)\leq Cd_S(x,y).
\end{equation}
This last inequality allows us to find a subsequence $(n_k)_k$ such that for each $x,y\in G$, the sequence $d_{n_k}(x,y)$ has a limit, so that for $x,y\in G$ we define $d_\infty(x,y):=\lim_k{d_{n_k}(x,y)}$. To conclude the result, there are some claims to verify:

Claim 1: $d_\infty\in \calD(G)$. 

Clearly $d_\infty$ is left-invariant, symmetric and satisfies the triangle inequality, so it is a pseudometric on $G$. Since each $d_n$ is $\del$-hyperbolic, $d_\infty$ is also $\del$-hyperbolic, and it is quasi-isometric to $d_S$ as a consequence of \eqref{upperboundqi}, so that $d_\infty\in \calD(G)$.

Claim 2: The sequence $\rho_n$ converges to $\rho_\infty:=[d_{\infty}]$. 

We first prove that $\ell_{d_{n_k}}(x)$ converges to $\ell_{d_\infty}(x)$ for all $x\in G$. Let $x\in G$, for which we have
\begin{align*}
    \ell_{d_\infty}(x)=\lim_{m\to \infty}{\frac{d_\infty(x^m,1)}{m}}=\lim_{m\to \infty}{\frac{\lim_k{d_{n_k}(x^m,1)}}{m}} \geq \limsup_k{\ell_{d_{n_k}(x)}},
\end{align*}
where in the last inequality we used that $\ell_{d}(x)=\inf_{m \geq 1}{\frac{d(x^m,1)}{m}}$ for all $d\in \calD(G)$ and $x\in G$. For the inequality $\ell_{d_\infty}(x)\leq \liminf_k{\ell_{d_{n_k}(x)}}$ we use \cite[Thm.~1.1]{OregonReyes18propsetsisom}, which implies that $$\ell_d(x)=\sup_{m\geq 1}{\left(\frac{d(x^{2m},1)-d(x^m,1)-2\del}{m}\right)}$$ for each $\del$-hyperbolic pseudometric $d\in \calD(G)$. From this we get 
\begin{align*}
    \ell_{d_\infty}(x) &=\lim_{m\to \infty}{\left(\frac{d_\infty(x^{2m},1)-d_\infty(x^m,1)-2\del}{m}\right)} \\&=\lim_{m\to \infty}{\left(\frac{\lim_k(d_{n_k}(x^{2m},1)-d_{n_k}(x^m,1)-2\del)}{m}\right)} \leq \liminf_k{\ell_{d_{n_k}(x)}}.
\end{align*}
This convergence implies that 
for all $n$ and $x\in G$
\begin{equation}\label{eqconvergence}
    (\limsup_{k\to \infty}{\Dil(d_{n},d_{n_k})})^{-1}\ell_{d_n}(x) \leq \ell_{d_\infty}(x)\leq \limsup_{k\to \infty}{\Dil(d_{n_k},d_n)}\ell_{d_n}(x).
\end{equation}
As each $d_n$ has critical exponent 1, by Proposition \ref{Lambda=DilDil} and Lemma \ref{normalizedcriticalexponent} we have that \begin{equation}\label{eq.Dilmn}
\max(\Dil(d_{n},d_{m}),\Dil(d_{m},d_{n}))\leq \Lam(\rho_{m},\rho_n)
\end{equation}
for all $m,n$. Also, since $(\rho_n)_n$ is Cauchy, the sequence $\Lam_n:=\limsup_{k\to \infty}{\Lam(\rho_{n_k},\rho_n)}$ tends to 1, and hence $\rho_n$ converges to $\rho_\infty$.

Claim 3: $\rho_\infty\in \scrD_{\del,\al}(G)$.

We already proved that $d_\infty$ is $\del$-hyperbolic, so we now show that $h(d_\infty)=1$. Indeed, for each $d\in \calD(G)$, the quantity $h(d)$ is also the critical exponent of
\begin{equation*}
    b \mapsto \sum_{x \in \mathrm{conj}'}{e^{-b\ell_d(x)}},
\end{equation*}
where $\mathrm{conj}'$ is a complete set of representatives for the conjugacy classes of infinite order elements of $G$ (see e.g. \cite[Prop.~3.1]{CantrellTanaka2021Manhattan}, the case $a=0$). Applying this to $d_\infty$ and each $d_n$, and using \eqref{eqconvergence} and \eqref{eq.Dilmn}, we get that 
$$\Lam_n^{-1}h(d_n)\leq h(d_\infty)\leq \Lam_n h(d_n)$$
for all $n$. Since $h(d_n)=1$ for each $n$ and $\Lam_n$ tends to 1, we deduce $h(d_\infty)=1$.

We are only left to show that $d_\infty$ is $\al$-rough geodesic. To do this, fix $x\in G$, and for each $k$, let $1=x_{0,k},x_{1,k},\dots,x_{r_k,k}=x$ be an $\al$-rough geodesic for $d_{n_k}$ joining $1$ and $x$. By \eqref{upperboundqi} we have $r_k\leq \al+d_{n_k}(x,1)\leq \al+C|x|_S$, so that the sequence $(r_k)_k$ is bounded, and after replacing $n_k$ by a further subsequence and reindexing, we can assume that $r_k=r$ for all $k$. From this, it is enough to prove that for all $0\leq j \leq r$, the sequence $(x_{j,k})_k$ is bounded in $d_{\infty}$, since in that case, and after extracting a subsequence, the constant sequence of points $x_j=x_{j,k}$ will define an $\al$-rough geodesic for $d_\infty$ joining $1$ and $x$. 

To prove this, note that for $0\leq j \leq r$ and any $k$ we have $d_{n_k}(x_{j,k},1)\leq j+\al$, so it is enough to show that for all $y\in G$ and $k$ we have 
$$d_\infty(y,1)\leq A'd_{n_k}(y,1)+B',$$
for some constants $A',B'$ independent of $y$ and $k$.
For this we apply Proposition \ref{thmDGLM} to $d_{\infty}$ and $\ep=1/2$, to get a finite set $V\subset G$ and a constant $E\geq 0$ such that $d_\infty(y,1)\leq 2\max_{v\in V}{\ell_{d_\infty}(vy)}+E$ for all $y$. Therefore, by this and \eqref{upperboundqi} we obtain
\begin{align*}
    d_\infty(y,1) &\leq 2\max_{v\in V}{\ell_{d_\infty}(yv)}+E \\ & \leq 2\Lam(\rho_\infty,\rho_{n_k})\max_{v\in V}{\ell_{d_{n_k}}(yv)}+E \\ &\leq 2\Lam(\rho_\infty,\rho_{n_k})d_{n_k}(y,1)+E+2\Lam(\rho_\infty,\rho_{n_k})\max_{v\in V}{d_{n_k}(v,1)} \\ &\leq 2\Lam(\rho_\infty,\rho_{n_k})d_{n_k}(y,1)+E+2C\Lam(\rho_\infty,\rho_{n_k})\max_{v\in V}{|v|_S},
\end{align*}
and the conclusion follows since the sequence $k\mapsto \Lam(\rho_\infty,\rho_{n_k})$ is bounded. This completes the proof of Claim 3 and the proposition.
\end{proof}
 
\begin{prop}\label{propersecond}
For each $\del,\al\geq 0$, the set $\scrD_{\del,\al}(G)$ is either empty or a proper subspace of $\scrD(G)$.
\end{prop}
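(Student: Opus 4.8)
The plan is to leverage the completeness established in Proposition \ref{complete} and reduce properness to total boundedness of bounded sets. Recall that a complete metric space is proper if and only if every bounded subset is totally bounded, since closed balls are then complete and totally bounded, hence compact. Assuming $\scrD_{\del,\al}(G)$ is non-empty, I will fix a bounded subset $B\subseteq \scrD_{\del,\al}(G)$ and a finite symmetric generating set $S$, and show that $B$ can be covered by finitely many sets of arbitrarily small $\Del$-diameter.

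First I would apply Lemma \ref{lem.unifquasiisom} to obtain a constant $C\geq 1$, depending only on $B$ and $S$, and for each $\rho\in B$ a $\del$-hyperbolic, $\al$-rough geodesic representative $d_\rho\in \hat{\rho}$ satisfying $C^{-1}d_S(x,y)-C\leq d_\rho(x,y)\leq Cd_S(x,y)$. The key consequence of this uniform two-sided bound is that the ball $S_n^\rho:=B_{d_\rho}(n)$ is squeezed between fixed word-balls independently of $\rho$: the lower bound gives $B_{d_\rho}(n)\subseteq B_{d_S}(Cn+C^2)$, a fixed finite subset of $G$, while the upper bound gives $B_{d_S}(n/C)\subseteq B_{d_\rho}(n)$, so that for $n\geq C$ the set $S_n^\rho$ contains $S$ and hence generates $G$. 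Consequently, as $\rho$ ranges over $B$, the sets $S_n^\rho$ take only finitely many values, namely subsets of the fixed finite set $B_{d_S}(Cn+C^2)$.

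Next, for each $n>\al+1$ with $n\geq C$, I would apply Lemma \ref{worddense} to the $\al$-rough geodesic pseudometric $d_\rho$ and its generating ball $S_n^\rho$, yielding $(n-1-\al)d_{S_n^\rho}-(n-1)\leq d_\rho\leq n\,d_{S_n^\rho}$. Reading these inequalities through the definition of $\Lam$ (take $\lam_2=n$ with $A=0$ for the upper bound, and $\lam_1=(n-1-\al)^{-1}$ with $A=n-1$ for the lower bound) gives $\Lam([d_\rho],[d_{S_n^\rho}])\leq \frac{n}{n-1-\al}$, hence $\Del([d_\rho],[d_{S_n^\rho}])\leq \log\frac{n}{n-1-\al}$, a bound that is uniform in $\rho$ and tends to $0$ as $n\to\infty$. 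Given $\ep>0$, I would then pick $n$ so large that this bound is below $\ep/2$; the finitely many word metric structures $[d_T]$ with $T\subseteq B_{d_S}(Cn+C^2)$ serve as centers of $\Del$-balls of radius $\ep/2$ that cover $B$. Their intersections with $B$ have diameter at most $\ep$, establishing total boundedness, and together with completeness this proves that $\scrD_{\del,\al}(G)$ is proper.

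I expect the main obstacle to be the uniformity: one must ensure that the approximation by word metrics and the finiteness of the relevant word-balls hold simultaneously over all of $B$, rather than one metric structure at a time. This is exactly what the uniform quasi-isometry constant $C$ from Lemma \ref{lem.unifquasiisom} provides, and it is where the $\al$-rough geodesic hypothesis is indispensable, since Lemma \ref{worddense} requires rough geodesicity to compare a pseudometric with the word metric induced by its own balls.
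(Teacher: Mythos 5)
Your proposal is correct and follows essentially the same route as the paper: completeness from Proposition \ref{complete} reduces properness to total boundedness of bounded sets, Lemma \ref{lem.unifquasiisom} supplies the uniform quasi-isometry constant $C$, and Lemma \ref{worddense} applied to the balls $B_{d_\rho}(n)$ (which generate $G$ for $n\geq C$ and live inside the fixed finite set $B_{d_S}(Cn+C^2)$) yields the finite $\ep/2$-net of word metric structures. The quantitative details, including the bound $\Del(\rho,[d_{S_n^\rho}])\leq\log\frac{n}{n-1-\al}$, match the paper's argument.
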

\begin{proof}
Assume $\scrD_{\del,\al}(G)$ is non-empty. Then it is complete by Proposition \ref{complete}, so it is enough to show that for any bounded set $B\subset \scrD_{\del,\al}(G)$ and any $\ep>0$, $B$ can be covered by finitely many subsets of diameter at most $\ep$.

As $B$ is bounded and contained in $\scrD_{\del,\al}(G)$, by Lemma \ref{lem.unifquasiisom} we can find some $C\geq 1$ such that for every $\rho \in B$ there is some $\del$-hyperbolic and $\al$-rough geodesic pseudometric $d_\rho \in \hat\rho$ satisfying
\begin{equation}\label{qiwordB}
    C^{-1}d_S(x,y)_S-C\leq d_\rho(x,y) \leq Cd_S(x,y)
\end{equation}
for all $x,y\in G$.

Now, for every $\rho\in B$ and $n\geq 0$, let $S_{n,\rho}:=B_{d_{\rho}}(n)$. By \eqref{qiwordB}, for every $\rho \in B$ we have $S\subset S_{C,\rho}$, so that $S_{n,\rho}$ generates $G$ for all $n\geq C$. Therefore, by Lemma \ref{worddense} we have that for all $\rho \in B$, $n\geq \max(C,\al+2)$ and $x\in G$:
\begin{equation*}
(n-1-\al)d_{S_{n,\rho}}(x,1)-(n-1)\leq d_\rho(x,1)\leq nd_{S_{n_\rho}}(x,1).
\end{equation*}
Given $\ep>0$, let $n_0\geq \max(C,\al+2)$ be such that $\frac{n_0}{n_0-1-\al}<e^{\ep/2}$, and let $\calA\subset \scrD(G)$ be the set of all the metric structures $[d_T]$, with $T$ a finite generating set contained in $B_{S}(Cn_0+C^2)$. Note that $\calA$ is finite.

By \eqref{qiwordB} we obtain that $[d_{S_{n_0,\rho}}]\in \calA$  for every $\rho \in B$, and that  $\Del(\rho,[d_{S_{n_0,\rho}}])\leq \log(\frac{n_0}{n_0-1-\al})<\ep/2$, so that $B$ is covered by the finite collection of balls $ B_\Del(\sigma,\ep/2)$ with $\sigma \in \calA$.
\end{proof}

To finish the proof of Theorem \ref{proper}, we are left to show that $\scrD_{\del}(G)$ is either empty or proper for any $\del\geq 0$. In virtue of Proposition \ref{propersecond}, it is enough to prove that bounded subsets of $\scrD_{\del}(G)$ are contained in $\scrD_{\del,\al}(G)$ for some $\al\geq 0$. The following lemma is an adaptation of Lemma \ref{lem.unifquasiisom} for bounded subsets of $\scrD_\del(G)$.

\begin{lemma}\label{lem.unifquasiisomJUSTDEL}
Let $S\subset G$ be a finite symmetric generating set, and $B\subset \scrD_{\del}(G)$ a bounded subset. Then there exists some $C\geq 1$ depending only on $B$ and $S$, such that for every $\rho\in B$ there exists a $\del$-hyperbolic geodesic pseudometric $d_\rho\in \hat\rho$ satisfying
\begin{equation*}
    C^{-1}d_S(x,y)-C\leq d_\rho(x,y)\leq Cd_S(x,y)
\end{equation*}
for all $x,y \in G$.
\end{lemma}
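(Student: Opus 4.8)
The plan is to follow the proof of Lemma \ref{lem.unifquasiisom} closely; the one genuine difficulty is that a representative of $\rho\in B$ is now only guaranteed to be $\del$-hyperbolic, not $\al$-rough geodesic, so Theorem \ref{thmbrefuj} cannot be applied to $(G,d'_\rho)$ directly. First, exactly as in Lemma \ref{lem.unifquasiisom}, boundedness of $B$ furnishes a constant $\Lam\ge 1$ with $\Lam^{-1}\ell_S\le \ell_d\le \Lam\ell_S$ for all $\rho\in B$ and $d\in\hat\rho$, where $\ell_S=\ell_{d_S}$. For each $\rho$ I would fix a $\del$-hyperbolic $d'_\rho\in\hat\rho$ and pass to its injective hull $i\colon(G,d'_\rho)\to(\hat X_\rho,\hat d_\rho)$ via Proposition \ref{prop.injhull}, which is geodesic and $\del$-hyperbolic and on which each left multiplication $L_x$ extends uniquely to an isometry $\tilde x$ satisfying $\tilde x\circ i=i\circ L_x$ and $\widetilde{xy}=\tilde x\tilde y$. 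Since $(\hat X_\rho,\hat d_\rho)$ is geodesic, I can now apply Theorem \ref{thmbrefuj} to the finite set $\corchete{\tilde s\colon s\in S}$ with rough geodesic constant $0$, obtaining $z_\rho\in\hat X_\rho$ with
\[
\max_{s\in S}\hat d_\rho(\tilde s z_\rho,z_\rho)\le K(\del,0)+\tfrac12\max_{s_1,s_2\in S}\ell_{\hat d_\rho}(\widetilde{s_1s_2}).
\]
As translation length is independent of the basepoint, $\ell_{\hat d_\rho}(\widetilde{s_1s_2})=\ell_{d'_\rho}(s_1s_2)\le \Lam\,\ell_S(s_1s_2)$, so the right-hand side is at most a constant $C_1$ depending only on $B$ and $S$.

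The main obstacle is that $z_\rho$ lies in the hull $\hat X_\rho$ rather than in $G$, so, unlike in Lemma \ref{lem.unifquasiisom}, I cannot recenter by a group element. Instead I would pull $\hat d_\rho$ back along the orbit map of $z_\rho$, defining $d_\rho(x,y):=\hat d_\rho(\tilde x z_\rho,\tilde y z_\rho)$. Because $x\mapsto\tilde x$ is a homomorphism into $\Isom(\hat X_\rho)$, this $d_\rho$ is left-invariant, and the map $x\mapsto\tilde x z_\rho$ isometrically embeds $(G,d_\rho)$ into the geodesic $\del$-hyperbolic space $\hat X_\rho$; since $\del$-hyperbolicity passes to subspaces, $d_\rho$ is $\del$-hyperbolic. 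Evaluating translation lengths at $z_\rho$ gives $\ell_{d_\rho}(g)=\ell_{\hat d_\rho}(\tilde g)=\ell_{d'_\rho}(g)$ for every $g\in G$; hence $d_\rho$ and $d'_\rho$ have the same marked length spectrum, so they are roughly similar by \cite[Thm.~1.2]{CantrellTanaka2021Manhattan}, and $h(d_\rho)=h(d'_\rho)=1$ because the critical exponent is determined by the marked length spectrum (as recalled in the proof of Proposition \ref{complete}). Thus $d_\rho\in\hat\rho$, realized isometrically inside a geodesic $\del$-hyperbolic space. Checking that this orbit-map pullback preserves membership in $\hat\rho$ is the step requiring the most care.

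Finally I would establish the two-sided bound exactly as in Lemma \ref{lem.unifquasiisom}. For the upper bound, writing $x=s_1\cdots s_k$ with $k=|x|_S$ and using that the $\tilde s_i$ are isometries,
\[
d_\rho(x,1)=\hat d_\rho(\tilde x z_\rho,z_\rho)\le \sum_{i=1}^k\hat d_\rho(\tilde s_i z_\rho,z_\rho)\le C_1|x|_S,
\]
so $d_\rho(x,y)\le C_1 d_S(x,y)$. For the lower bound, I would apply Proposition \ref{thmDGLM} to $d_S$ with $\ep=1/2$ to get a finite $U\subset G$ and $C_2\ge0$ with $d_S(x,1)\le 2\max_{u\in U}\ell_S(xu)+2C_2$, and then combine the triangle inequality, the identity $\ell_{d_\rho}=\ell_{d'_\rho}\ge\Lam^{-1}\ell_S$, and the upper bound just proved, following the displayed chain of inequalities in Lemma \ref{lem.unifquasiisom}, to obtain
\[
d_\rho(x,y)\ge\tfrac{\Lam^{-1}}{2}d_S(x,y)-\Bigl(\Lam^{-1}C_2+C_1\max_{u\in U}|u|_S\Bigr).
\]
Taking $C=\max\bigl(C_1,2\Lam,\Lam^{-1}C_2+C_1\max_{u\in U}|u|_S\bigr)$, which depends only on $B$ and $S$, then yields the claimed inequality for every $\rho\in B$.
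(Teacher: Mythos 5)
Your proposal is correct and follows essentially the same route as the paper's (sketched) proof: pass to the injective hull via Proposition \ref{prop.injhull}, apply Theorem \ref{thmbrefuj} there with rough geodesic constant $0$ to locate a good basepoint $z_\rho$, pull back the hull metric along the orbit map, and then run the two-sided comparison with $d_S$ exactly as in Lemma \ref{lem.unifquasiisom}. The details you supply that the paper leaves to the reader (invariance of translation length under change of basepoint, hence $d_\rho\in\hat\rho$, and the word-length telescoping for the upper bound) are the right ones.
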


\begin{proof}[Sketch of proof]
For each $\rho\in B$, let $d'_\rho\in \hat\rho$ be a $\del$-hyperbolic pseudometric, and by Proposition \ref{prop.injhull} consider its injective envelope $(G,d'_\rho)\xhookrightarrow{i_\rho} (X_\rho,\hat{d}_\rho)$ which is $\del$-hyperbolic and geodesic. By this same proposition, $G$ also acts isometrically on $(X_\rho,\hat{d}_\rho)$, and the isometric map $i_\rho$ is $G$-equivariant.

As in the proof of Lemma \ref{lem.unifquasiisom}, we can use that $B$ is bounded and apply Theorem \ref{thmbrefuj} to the set $S$ and each $(X_\rho,\hat{d}_\rho)$, to find a constant $C \geq 1$ such that for any $\rho\in B$ there is a point $z_\rho\in X_\rho$ satisfying
\begin{equation*}
    C^{-1}d_S(x,y)-C\leq \hat{d}_\rho(xz_\rho,yz_\rho)\leq Cd_S(x,y)
\end{equation*}
for all $x,y\in G$. The proof is completed by taking $d_\rho(x,y):=\hat{d}_\rho(xz_\rho,yz_\rho)$ for $x,y\in G$, details are left to the reader.
\end{proof}

\begin{prop}\label{boundimpliesroughgeod}
For any $\del\geq 0$, if $B\subset \scrD_\del(G)$ is bounded, then $B\subset \scrD_{\del,\al}(G)$ for some $\al$.
\end{prop}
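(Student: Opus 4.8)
The plan is to hand each $\rho\in B$ a single pseudometric representative that is simultaneously $\del$-hyperbolic, of critical exponent $1$, and $\al$-rough geodesic with $\al$ depending only on $B$; the representatives will be exactly the ones supplied by Lemma \ref{lem.unifquasiisomJUSTDEL}, and the only thing left to verify is the uniform rough geodesic bound. First I would fix a finite symmetric generating set $S$ of $G$ and apply Lemma \ref{lem.unifquasiisomJUSTDEL} to the bounded set $B\subset\scrD_\del(G)$. This produces a constant $C\geq 1$, independent of $\rho$, together with $\del$-hyperbolic pseudometrics $d_\rho\in\hat\rho$ satisfying $C^{-1}d_S(x,y)-C\leq d_\rho(x,y)\leq Cd_S(x,y)$ for all $x,y\in G$. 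In particular every $d_\rho$ already has critical exponent $1$ and is $\del$-hyperbolic, so it remains only to bound its rough geodesic constant uniformly.

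The next step is to transport word geodesics through this uniform quasi-isometry. Since $S$ is a symmetric generating set, any $x,y\in G$ are joined by a geodesic word $x^{-1}y=s_1\cdots s_k$ with $k=d_S(x,y)$; setting $x_i=xs_1\cdots s_i$ we get $x=x_0,\dots,x_k=y$, and because every subword of a geodesic word is again geodesic, left-invariance gives $d_S(x_i,x_j)=|i-j|$ for all $i,j$. Feeding this into the inequality above yields $\frac{1}{C}|i-j|-C\le d_\rho(x_i,x_j)\le C|i-j|\le C|i-j|+C$, so $(x_i)_i$ is a $(C,C,C)$-quasigeodesic in $(G,d_\rho)$ joining $x$ and $y$. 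As $x,y$ were arbitrary, $(G,d_\rho)$ is $(C,C,C)$-quasigeodesic, with constants independent of $\rho\in B$.

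Finally I would invoke Lemma \ref{lem.roughgeodfromhyp}. Passing to the metric identification of $(G,d_\rho)$, which is legitimate by the convention on pseudometric spaces and which remains $\del$-hyperbolic and $(C,C,C)$-quasigeodesic, the lemma produces a constant $\al=\al(C,C,C,\del)\geq 0$ depending only on $C$ and $\del$, hence only on $B$ and $S$, such that this space is $\al$-rough geodesic; lifting the resulting rough geodesics back to $G$ (choosing representatives, distances being preserved) shows $(G,d_\rho)$ itself is $\al$-rough geodesic. Consequently each $d_\rho$ is a $\del$-hyperbolic, $\al$-rough geodesic pseudometric of critical exponent $1$ in $\hat\rho$, so $\rho\in\scrD_{\del,\al}(G)$, and since $\al$ is uniform over $B$ we conclude $B\subset\scrD_{\del,\al}(G)$.

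As for difficulty, the genuinely substantial input is already packaged into the two auxiliary lemmas: Lemma \ref{lem.unifquasiisomJUSTDEL}, which uses the injective hull to embed $(G,d_\rho)$ equivariantly into a geodesic $\del$-hyperbolic space and then applies Breuillard--Fujiwara's inequality to obtain representatives uniformly quasi-isometric to the word metric, and Lemma \ref{lem.roughgeodfromhyp}, whose proof rests on the Morse lemma. Given these, the proposition is essentially bookkeeping, and the only point demanding care is that every constant in sight (first $C$, then $\al$) depends on $B$ and $S$ but \emph{not} on the individual $\rho$, which is what ultimately lets a \emph{single} $\al$ work for the whole bounded set.
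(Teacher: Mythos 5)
Your proposal is correct and follows the paper's proof essentially verbatim: apply Lemma \ref{lem.unifquasiisomJUSTDEL} to get uniformly $(C,C,C)$-quasi-isometric representatives $d_\rho\in\hat\rho$, observe that word geodesics give $(C,C,C)$-quasigeodesics in $(G,d_\rho)$, and invoke Lemma \ref{lem.roughgeodfromhyp} to obtain a single $\al=\al(C,C,C,\del)$ working for all of $B$. The extra care you take with transporting word geodesics and with the metric identification is exactly the bookkeeping the paper leaves implicit.
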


\begin{proof}
Let $S\subset G$ be a finite symmetric generating set. We apply Lemma \ref{lem.unifquasiisomJUSTDEL} to $S$ and $B$ to find a constant $C \geq 1$ such that for any $\rho\in B$ there is a $\del$-hyperbolic pseudometric $d_\rho\in \hat\rho$ satisfying
\begin{equation*}
    C^{-1}d_S(x,y)-C\leq d_\rho(x,y)\leq Cd_S(x,y)
\end{equation*}
for all $x,y\in G$. In particular, for each $\rho\in B$, any pair of points in $(G,d_\rho)$ can be joined by a (discrete) $(C,C,C)$-quasigeodesic. Therefore, by Lemma \ref{lem.roughgeodfromhyp} applied to $(G,d_\rho)$ we deduce that there is some $\al\geq 0$ such that for every $\rho\in B$, the pseudometric $d_\rho$ is $\del$-hyperbolic and $\al$-rough geodesic, which concludes the proof of the proposition. \end{proof}


\section{The action of $\Out(G)$ and cocompactness}\label{secOutG}
In this section we prove Theorems \ref{Outmetricproper} and \ref{cocompactness}.
\begin{proof}[Proof of Theorem \ref{Outmetricproper}]
To prove the assertion, suppose by contradiction that there exists some $\rho \in \scrD(G)$, some $R>0$ and infinitely many $\psi\in \Out(G)$ such that $\Del(\rho, \psi(\rho))\leq R.$
Consider $d\in \hat\rho$ and let $\calA\subset \Aut(G)$ be a complete set of representatives of the elements $\psi\in \Out(G)$ such that $\ell_{\psi(d)}\leq e^{R}\ell_d$, which is infinite by assumption.

Let $S\subset G$ be a finite symmetric generating set, and for $\psi\in \calA$ consider the quantity $\lam_\psi=\inf_{x\in G}{\max_{s\in S}{d(x,\psi^{-1}(s)x)}}$. As $\calA$ is infinite, the set $\corchete{\lam_\psi}_{\psi \in \calA}$ is unbounded, see e.g. \cite[p.~338]{Paulin1991OuthypRtrees}. On the other hand, since each pseudometric $\psi(d)$ is $\del$-hyperbolic and $\al$-rough geodesic for some uniform $\del$ and $\al$, by Theorem \ref{thmbrefuj} there exists a constant $K>0$ such that for all $\psi\in \calA$ we have
\begin{align*}
    \lam_\psi \leq K+\frac{1}{2}\max_{s_1,s_2\in S}{\ell_{\psi(d)}(s_1s_2)}\leq K+\frac{e^R}{2}\max_{s_1,s_2\in S}{\ell_{d}(s_1s_2)}.
\end{align*}
The last term does not depend on $\psi$, implying the desired contradiction.
\end{proof}

We will need another result in the proof of Theorem \ref{cocompactness}, for which we need some notation. A \emph{marked group} is a pair $(G,S)$ where $G$ is a group and $S\subset G$ is a finite symmetric generating subset. If $G$ is hyperbolic, we say that $(G,S)$ is $\delta$-hyperbolic if the word metric $d_S$ is $\del$-hyperbolic. Similarly, the critical exponent of $(G,S)$ is $h(G,S)=h(d_S)$.  

An \emph{isomorphism of marked groups} $\psi:(G,S)\ra (G',S')$ is an isomorphism $\psi:G\ra G'$ such that $\psi(S)=S'$, and two marked groups are \emph{isometrically isomorphic} if there exists an isomorphism of marked groups between them. Note that the isomorphism of marked groups induces an isometry $(G,d_S) \ra (G',d_{S'})$, and when $G=G'$, the corresponding element in $\Out(G)$ induced by $\psi$ maps $[d_S]$ to $[d_{S'}]$.

The following finiteness result is a particular case of a theorem of Besson-Courtois-Gallot-Sambusetti \cite[Thm.~1.4]{BessonCourtoisGallotSambusetti2021Finitenesshyp}.

\begin{thm}[Besson-Courtois-Gallot-Sambusetti]\label{thmBCGS}
Let $G$ be a non-elementary, torsion-free hyperbolic group. For any $\del,H\geq 0$ there are only finitely many marked groups $(G,S)$ (up to isometric isomorphism) such that $(G,S)$ is $\del$-hyperbolic and $h(G,S)\leq H$.   
\end{thm}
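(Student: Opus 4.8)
The plan is to prove the finiteness by a soft combinatorial argument once two quantitative inputs are secured; alternatively one may simply invoke \cite[Thm.~1.4]{BessonCourtoisGallotSambusetti2021Finitenesshyp} directly and discard those marked groups whose underlying group is not isomorphic to $G$. The two inputs I would isolate are: (1) a uniform bound $|S|\le N(\del,H)$ on the number of generators, and (2) the fact that a $\del$-hyperbolic marked group is determined, up to isometric isomorphism, by a ball of controlled radius in its labelled Cayley graph. Granting (1) and (2), finiteness is immediate: by (1) the possible labelled balls have boundedly many vertices and bounded valence, so there are only finitely many of them, and by (2) each one accounts for at most one isometric isomorphism class.

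For (1), the point I would exploit is that $\del$-hyperbolicity forces the growth of $d_S$-balls to be comparable to $e^{h(G,S)r}$ \emph{uniformly}, i.e.\ with multiplicative constants controlled by $\del$ (a uniform version of Coornaert's estimate, which is exactly the kind of growth control provided by Besson--Courtois--Gallot--Sambusetti, and where torsion-freeness is used). Concretely, I would seek a constant $C=C(\del)$ with $|B_{d_S}(r)|\le C\,e^{h(G,S)\,r}$ for all $r\ge 0$; evaluating at $r=1$ and using $|B_{d_S}(1)|=|S|+1$ together with $h(G,S)\le H$ yields $|S|+1\le C(\del)\,e^{H}$, hence the desired bound $N(\del,H)$. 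I expect this uniform growth estimate to be the main obstacle: the naive subadditivity bound only gives $h(G,S)\le\log(|S|+1)$, which is the wrong direction, so the genuine content is that many short generators cannot coexist with slow exponential growth, and this is where hyperbolicity (and the control of short relators) must enter.

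For (2), I would use Dehn's algorithm: a $\del$-hyperbolic group admits a presentation whose relators have $d_S$-length at most some $R_0=R_0(\del)$ (after converting the four-point constant into a thin-triangles constant). Set $R=R_0$. The edge-labelled ball $B_{d_S}(R)$ of the Cayley graph records the generators and all relations of $d_S$-length $\le R$, in particular all of the short relators, which already present $G$. Hence any label-respecting isomorphism between the $R$-balls of $(G,S)$ and $(G,S')$ extends to an isomorphism $G\to G$ carrying $S$ onto $S'$, that is, to an isometric isomorphism $(G,S)\to(G,S')$. This is the rigidity that makes step (2) precise.

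Finally I would assemble the pieces. By (1) we may take the labels in $\{1,\dots,N\}$ with $N=N(\del,H)$, and then $B_{d_S}(R)$ is a labelled graph with at most $(N+1)^{R}$ vertices and valence at most $N$; there are only finitely many such labelled graphs up to label-respecting isomorphism. By (2) each isomorphism type is realized by at most one isometric isomorphism class of marked groups $(G,S)$ satisfying the hypotheses, so altogether there are finitely many such classes, as claimed.
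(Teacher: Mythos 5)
The paper offers no proof of this statement at all: it is quoted verbatim as a special case of \cite[Thm.~1.4]{BessonCourtoisGallotSambusetti2021Finitenesshyp}, obtained exactly as in your first sentence by restricting that theorem to marked groups whose underlying group is $G$. So your primary route coincides with what the paper does, and it is the intended one.

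Regarding your self-contained sketch: the architecture is sound and in fact mirrors how finiteness theorems of this type are actually proved (bound the number of generators, then observe that a $\delta$-hyperbolic marked group is determined up to isometric isomorphism by the labelled ball of radius comparable to the Dehn-presentation relator length $R_0(\delta)$, and count labelled balls). Step (2) is genuinely routine. But step (1) is not an ``input to be secured''; the uniform estimate $|B_{d_S}(r)|\le C(\delta)e^{h(G,S)r}$ with $C$ depending only on $\delta$ (and not on the group or generating set) is precisely the hyperbolic Bishop--Gromov/Margulis-type inequality that constitutes the main content of \cite{BessonCourtoisGallotSambusetti2021Finitenesshyp}, and it is where torsion-freeness is genuinely needed. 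You correctly flag it as the main obstacle, but as written the argument either leaves the entire difficulty unproved or must cite BCGS for it --- at which point one may as well cite their Theorem 1.4 wholesale, as the paper does. One minor point in step (2): to read off all relators of length $\le R_0$ from the labelled ball it suffices to take radius $R_0/2$ plus a constant, since a loop of length $\ell$ based at the identity stays in $B(\ell/2)$; your choice $R=R_0$ is safe but the bookkeeping should be stated that way.
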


\begin{proof}[Proof of Theorem \ref{cocompactness}]
Assume $\scrD_{\del,\al}(G)$ is non-empty, and let $
\rho\in \scrD_{\del,\al}(G)$ and $d_\rho\in \hat\rho$ be a $\del$-hyperbolic and $\al$-rough geodesic pseudometric representative. For this metric we have that $S_\rho:=B_{d_\rho}(2+\lfloor{\al}\rfloor)$ generates $G$. Lemma \ref{worddense} then implies that 
\begin{equation*}
    (1+\lfloor{\al}\rfloor-\al)d_{S_\rho}(x,y)-(n-1)\leq d_\rho(x,y)\leq (2+\lfloor{\al}\rfloor)d_{S_\rho}(x,y)
\end{equation*}
for all $x,y\in G$. Thus, we get that $\Del(\rho,[d_{S_\rho}])\leq R:=\log(\frac{2+\lfloor{\al}\rfloor}{1+\lfloor{\al}\rfloor-\al})$. In addition, we also get $h(G,S_\rho) \leq (2+\lfloor{\al}\rfloor)h(d_\rho)=2+\lfloor{\al}\rfloor$, and by applying Morse lemma, we can deduce that $(G,S_\rho)$ is $\wtilde{\del}$-hyperbolic, for $\wtilde{\del}$ depending only on $\del$ and $\al$. From this, we can apply Theorem \ref{thmBCGS} and conclude that there is a finite set $(G,S_1),\dots,(G,S_k)$ of marked groups such that each marked group $(G,S_\rho)$ with $\rho\in \scrD_{\del,\al}(G)$ is isometrically isomorphic to some $(G,S_i)$. Define $K$ as the intersection of $\scrD_{\del,\al}(G)$ with the closure of the union $B_\Del([d_{S_1}],R)\cup \cdots \cup B_\Del([d_{S_k}],R)$, which is compact by Proposition \ref{propersecond}. As marked isomorphisms are induced by automorphisms of $G$ exchanging the corresponding word metrics, and since $\Out(G)$ acts isometrically, we get $\scrD_{\del,\al}(G)=\Out(G)\cdot K$ and conclude the proof of the theorem.
\end{proof}


\section{Continuity of the Bowen-Margulis current and the mean distortion}\label{secBM}
In this section we prove Theorems \ref{BMctns} and \ref{meandistctns}. We start with the continuity of the Bowen-Margulis current, so we fix a sequence $(\rho_n)_n\subset \scrD_{\del}(G)$ converging to $\rho_\infty$. Our goal is to prove that $BM(\rho_n)$ converges to $BM(\rho_\infty)$ in $\bbP\Curr(G)$.
As $\bbP\Curr(G)$ is metrizable, our strategy will be to prove that each subsequence of $(BM(\rho_{n}))_n$ has a further subsequence converging to $BM(\rho_\infty)$.

Fix a finite symmetric generating set $S\subset G$. By Proposition \ref{boundimpliesroughgeod}, there exists some $\al\geq 0$ such that $\rho_n \in \scrD_{\del,\al}(G)$ for all $n$, and by Lemma \ref{lem.unifquasiisom} we can find a constant $C\geq 1$ and $\del$-hyperbolic, $\al$-rough geodesic pseudometrics $d_n\in \hat\rho_n$ such that 
\begin{equation}\label{eqqitod}
    C^{-1}d_S(x,y)-C\leq d_n(x,y)\leq Cd_S(x,y)
\end{equation}
for all $n$ and all $x,y\in G$.

We consider a subsequence of $(\rho_n)_n$, that for simplicity we still denote by $\rho_n$. Due to our assumptions, and after extracting a further subsequence, we can assume that $d_{n}$ converges pointwise to a pseudometric $d_\infty\in \hat\rho_\infty$, so that $d_\infty$ is also $\del$-hyperbolic and $\al$-rough geodesic and satisfies \eqref{eqqitod} with $d_\infty$ in the place of $d_n$. 

The next lemma will guarantee a good large-scale convergence of the pseudometrics $d_{n}$, and might be considered as a characterization of convergence of metric structures in terms of pointwise convergence of convenient pseudometric representatives.

\begin{lemma}\label{lem.pointconvimpliesunifconv}
Let $\del,\al\geq 0$ and $(d_n)_n \subset \calD(G)$ be a sequence of $\del$-hyperbolic, $\al$-rough geodesic pseudometrics on $G$. Suppose that $d_n$ converges pointwise to a pseudometric $d_\infty$, also $\del$-hyperbolic and $\al$-rough geodesic. Then for any $\ep\in(0,1)$ there is some $n_0$ such that if $n\geq n_0$ and $x\in G$ then
\begin{equation*}
    d_n(x,1) \leq (1+\ep)d_\infty(x,1)+3\al+1.
\end{equation*}
In addition, suppose that there exist $A,B\geq 1$ such that $d_\infty \leq Ad_n+B$ for all $n$. Then for any $\ep\in(0,1)$ there is some $n_1$ such that if $n\geq n_1$ and $x\in G$ then
\begin{equation*}
    d_\infty(x,1) \leq (1+\ep)d_n(x,1)+3\al+1.
\end{equation*}
\end{lemma}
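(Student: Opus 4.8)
The plan is to upgrade the pointwise convergence of the $d_n$ to a bound that is uniform in $x\in G$, using the $\al$-rough geodesic structure as the bridge. The only direct information coming from pointwise convergence is uniform convergence on \emph{finite} subsets of $G$; since $d_\infty\in\calD(G)$ is quasi-isometric to a word metric, every ball $B_{d_\infty}(R)$ is finite, so $d_n(u,1)\to d_\infty(u,1)$ uniformly as $u$ ranges over any such ball. The mechanism to turn this into a uniform statement is to cut a rough geodesic between $1$ and $x$ into consecutive blocks of a \emph{fixed} index-length $L$, so that each block displaces a group element lying in one \emph{fixed} finite ball, and then to reassemble $x$ as a product of these elements and estimate by the triangle inequality. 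Choosing $L$ large will absorb the per-block rough-geodesic defect $\al$ into an arbitrarily small multiplicative error.

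For the first inequality, I would fix $\ep\in(0,1)$, put $\eta=1$, choose an integer $L\geq(\al+1)/\ep$, and set $F_L:=B_{d_\infty}(L+\al)$, a fixed finite set. By pointwise convergence there is $n_0$ with $d_n(u,1)\leq d_\infty(u,1)+\eta$ for all $n\geq n_0$ and $u\in F_L$. Given $x$, take an $\al$-rough geodesic $1=x_0,\dots,x_r=x$ for $d_\infty$, so $r\leq d_\infty(x,1)+\al$. Write $r=qL+s$ with $0\leq s<L$ and set $g_j=x_{(j-1)L}^{-1}x_{jL}$ for $1\leq j\leq q$ and $g_{q+1}=x_{qL}^{-1}x_r$; by left-invariance each $g_j\in F_L$, and $x=g_1\cdots g_{q+1}$. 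The triangle inequality and left-invariance then give
\[
d_n(x,1)\leq\sum_{j=1}^{q+1}d_n(g_j,1)\leq q(L+\al+\eta)+(s+\al+\eta)=r+(q+1)(\al+\eta),
\]
and since $q\leq r/L$ and $r\leq d_\infty(x,1)+\al$, inserting $\eta=1$ and $(\al+\eta)/L\leq\ep$ bounds this by $(1+\ep)d_\infty(x,1)+3\al+1$. Elements with $d_\infty(x,1)\leq L+\al$ already lie in $F_L$ and are handled directly, which also sidesteps the degeneracy of rough geodesics between distinct points of $d_\infty$-distance zero (torsion).

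For the second inequality I would run the identical decomposition on an $\al$-rough geodesic for $d_n$, so that $r\leq d_n(x,1)+\al$ and each block $g_j$ lies in $B_{d_n}(L+\al)$, and estimate $d_\infty(x,1)\leq\sum_j d_\infty(g_j,1)$. The difficulty is that the balls $B_{d_n}(L+\al)$ now depend on $n$, so pointwise convergence on a single fixed finite set no longer applies directly; this is precisely where the extra hypothesis $d_\infty\leq Ad_n+B$ enters. It forces every $g_j\in B_{d_n}(L+\al)$ to satisfy $d_\infty(g_j,1)\leq A(L+\al)+B$, so that all blocks land in the fixed finite set $F_L':=B_{d_\infty}(A(L+\al)+B)$, independent of $n$. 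Applying pointwise convergence on $F_L'$ to obtain $n_1$ with $d_\infty(u,1)\leq d_n(u,1)+\eta$ for $u\in F_L'$, the same arithmetic yields $d_\infty(x,1)\leq(1+\ep)d_n(x,1)+3\al+1$. The main obstacle is therefore conceptual rather than computational: guaranteeing that the segment elements stay in one $n$-independent finite set. This is automatic in the first part, where the blocks are measured by $d_\infty$ itself, but in the second part it is exactly the comparison $d_\infty\leq Ad_n+B$ that makes it work.
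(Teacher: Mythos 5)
Your proof is correct and follows essentially the same route as the paper's: cut an $\al$-rough geodesic into blocks of a fixed index-length $L$, note that pointwise convergence is uniform on the finite ball containing all block increments, and sum via the triangle inequality, with the hypothesis $d_\infty\leq Ad_n+B$ serving exactly to trap the blocks in an $n$-independent finite ball in the second part. The only differences are cosmetic (you take the per-block error to be $1$ where the paper takes $\ep/2$), and your arithmetic does land within the stated additive constant $3\al+1$.
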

\begin{proof}
Let $\ep\in(0,1)$, and for $x\in G$ consider an $\al$-rough geodesic $1=x_0,\dots,x_m=x$ for $d_\infty$ joining $1$ and $x$. Given $L>\frac{2\al+\ep}{2\ep}$ an integer, let $n_0$ be such that $|d_n(p,q)-d_\infty(p,q)|<\ep/2$ for all $n\geq n_0$ and all $p,q$ such that $d_\infty(p,q)\leq L+\al$. If $m=sL+r$
with $s,r$ nonnegative integers and $0 \leq r<L$, then
\begin{align*}
    d_n(1,x) & \leq \sum_{i=0}^{s-1}{d_n(x_{iL},x_{(i+1)L})}+d_n(x_{sL},x_m) \\
    & \leq \sum_{i=0}^{s-1}{\left(d_\infty(x_{iL},x_{(i+1)L})+\ep/2\right)}+d_\infty(x_{sL},x_m)+\ep/2 \\
     & \leq s(L+\al+\ep/2)+(r+\al+\ep/2)\\ &=m+(s+1)(\al+\ep/2)\\ &\leq d_\infty(1,x)+\al+(s+1)(\al+\ep/2).
\end{align*}
Since $s\leq d_{\infty}(1,x)/L+\al/L$ we deduce
$$d_n(1,x)\leq (1+\al/L+\ep/(2L))d_\infty(1,x)+\al+(\al/L+1)(\al+\ep/2)\leq (1+\ep)d_\infty(1,x)+3\al+1,$$
where we used $\al/L+\ep/(2L)<\ep<1$. This proves the first statement. 

The second statement is proven similarly, where we choose $n_1$ such that $|d_n(p,q)-d_\infty(p,q)|<\ep/2$ for all $n\geq n_1$ and all $p,q$ such that $d_\infty(p,q)\leq A(L+\al)+B$, where $L>\frac{2\al+\ep}{2\ep}$ is an integer, and $x_0,\dots, x_m$ is now an $\al$-rough geodesic for $d_n$ with $n\geq n_1$.
\end{proof}

After extracting a subsequence, by Lemma \ref{lem.pointconvimpliesunifconv} we can assume that there is a decreasing sequence $\Lam_n \to 1$ such that 
\begin{equation}\label{distconv}
    \Lam_n^{-1}d_\infty-3\al-2 \leq d_{n} \leq \Lam_nd_\infty+3\al+1
\end{equation}
for all $n$.
By Proposition \ref{strongqi} this implies that 
\begin{equation}\label{groprodconv}
    \Lam_n^{-1}\groprod{.}{.}{.,d_\infty}-Q \leq \groprod{.}{.}{.,d_n} \leq \Lam_n\groprod{.}{.}{.,d_\infty}+Q
\end{equation}
for all $n$, where $Q$ is a constant independent of $n$.

Given $n$, let $\nu_n$ be a quasiconformal measure for $d_n$ with uniform multiplicative constant $D=D_\del$ as in \eqref{equnifqc}. As $\partial G$ is compact metrizable, and after extracting a subsequence, we can assume that $\nu_n$ weak-$*$ converges to $\nu_\infty$. We claim that $\nu_\infty$ is quasiconformal for $d_\infty$.

\begin{prop}\label{convtoqc}
There is a constant $M\geq 1$ such that for all $x\in G$ and all $f\in C(\partial G)$ with $f\geq 0$:
\begin{equation}\label{eqqcintegral}
   M^{-1}\int{f(p)e^{-\beta_{d_\infty}(x,1;p)}d\nu_\infty(p)} \leq  \int{f(p)dx\nu_\infty(p)}\leq M\int{f(p)e^{-\beta_{d_\infty}(x,1;p)}d\nu_\infty(p)}.
\end{equation}
In consequence, $\nu_\infty$ is quasiconformal for $d_\infty$.
\end{prop}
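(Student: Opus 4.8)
The plan is to establish \eqref{eqqcintegral} by passing to the limit in the uniform quasiconformal inequality \eqref{equnifqc} for the measures $\nu_n$, after integrating against $f$ and exploiting both the weak-$*$ convergence $\nu_n\to\nu_\infty$ and a comparison between the Busemann functions of $d_n$ and $d_\infty$. Throughout I fix $x\in G$ and $f\in C(\partial G)$ with $f\geq 0$; the constant $M$ I produce will depend only on $\del$, on the additive constant $Q$ appearing in \eqref{groprodconv}, and on $D=D_\del$, hence will be uniform in $x$ and $f$ as the statement demands. Note also that $\nu_\infty$ is automatically a Borel probability measure, being a weak-$*$ limit of probability measures on the compact space $\partial G$.

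First I would record the comparison of Busemann functions. The extended Gromov product satisfies $0\leq \groprod{x}{p}{1,d}\leq d(x,1)$ for $p\in\partial G$, and \eqref{groprodconv} extends to boundary points via the sup--liminf definition of $\groprod{\cdot}{\cdot}{\cdot}$ (at worst enlarging the additive constant to some $Q'$). Combining this with the bound $\groprod{x}{p}{1,d_\infty}\leq d_\infty(x,1)$, the pointwise convergence $d_n(x,1)\to d_\infty(x,1)$, the fact that $\Lam_n\to 1$, and \eqref{eqbuseq}, I obtain
\begin{equation*}
    |\beta_{d_n}(x,1;p)-\beta_{d_\infty}(x,1;p)|\leq \eta_n(x)+F \qquad (p\in\partial G),
\end{equation*}
where $F=2Q'+4\del$ is a constant and $\eta_n(x)=|d_n(x,1)-d_\infty(x,1)|+2(\Lam_n-1)d_\infty(x,1)\to 0$ as $n\to\infty$ for fixed $x$. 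Feeding this into \eqref{equnifqc} and integrating against $f$ yields
\begin{equation*}
    \int f\,dx\nu_n\leq D\,e^{F+\eta_n(x)}\int f(p)\,e^{-\beta_{d_\infty}(x,1;p)}\,d\nu_n(p),
\end{equation*}
together with the analogous lower bound. The left-hand side converges to $\int f\,dx\nu_\infty$, since $x$ acts on $\partial G$ by a homeomorphism, so $f\circ x\in C(\partial G)$ and $x\nu_n\to x\nu_\infty$ weak-$*$; and the prefactor $De^{F+\eta_n(x)}$ converges to $De^{F}$.

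The remaining point, which I expect to be the main obstacle, is to pass the weak-$*$ limit inside $\int f(p)\,e^{-\beta_{d_\infty}(x,1;p)}\,d\nu_n(p)$. This would be immediate were $p\mapsto e^{-\beta_{d_\infty}(x,1;p)}$ continuous, but in a merely $\al$-rough geodesic space the Busemann function need not be. Using \eqref{eqbuseq} to replace $e^{-\beta_{d_\infty}(x,1;p)}$ by $e^{2\groprod{x}{p}{1,d_\infty}}$ up to the factor $e^{2\del}$, the issue reduces to the regularity of $p\mapsto \groprod{x}{p}{1,d_\infty}$. Here \eqref{gromovinboundary} together with $\groprod{x}{q}{1,d_\infty}\leq d_\infty(x,1)$ shows that whenever $\groprod{q}{q'}{1,d_\infty}>d_\infty(x,1)$ one has $|\groprod{x}{q}{1,d_\infty}-\groprod{x}{q'}{1,d_\infty}|\leq 3\del$; thus this function is bounded and has oscillation at most $3\del$ on balls of a fixed visual radius $r_x>0$. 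I can therefore sandwich $f(p)\,e^{-\beta_{d_\infty}(x,1;p)}$ between genuinely continuous functions differing by a multiplicative factor $e^{c\del}$ (via a finite partition of unity at scale $r_x$), apply weak-$*$ convergence to these continuous bounds, and absorb $e^{c\del}$ into the constant. Since $r_x$ and the approximants depend only on $x$ while the resulting factor depends only on $\del$, this produces \eqref{eqqcintegral} with $M=De^{F+c\del}$ independent of $x$ and $f$.

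Finally, to deduce that $\nu_\infty$ is quasiconformal for $d_\infty$, I would upgrade \eqref{eqqcintegral} from nonnegative continuous $f$ to indicators of Borel sets by the standard monotone approximation, obtaining $M^{-1}\int_A e^{-\beta_{d_\infty}(x,1;p)}\,d\nu_\infty\leq x\nu_\infty(A)\leq M\int_A e^{-\beta_{d_\infty}(x,1;p)}\,d\nu_\infty$ for every Borel $A\subset\partial G$. In particular $x\nu_\infty\ll\nu_\infty$ with $\tfrac{dx\nu_\infty}{d\nu_\infty}$ comparable to $e^{-\beta_{d_\infty}(x,1;p)}$ up to the factor $M$, which is exactly the condition of Definition \ref{defqcmeas}, as $h(d_\infty)=1$.
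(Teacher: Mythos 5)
Your argument is correct and reaches \eqref{eqqcintegral} with a constant uniform in $x$ and $f$, but it resolves the two technical difficulties by different devices than the paper. Where you compare the Busemann functions of $d_n$ and $d_\infty$ directly --- converting the multiplicative factor $\Lam_n$ from \eqref{groprodconv} into a vanishing additive error $\eta_n(x)$ via the bound $\groprod{x}{p}{1,d_\infty}\le d_\infty(x,1)$ --- the paper keeps $\Lam_k$ as an exponent on $e^{2\groprod{x}{p}{1,d_\infty}}$ and removes it only at the very end by dominated convergence in $k$. And where you deal with the discontinuity of $p\mapsto\groprod{x}{p}{1,d_\infty}$ by a partition-of-unity sandwich between genuinely continuous functions, exploiting that by \eqref{gromovinboundary} this function has oscillation at most $3\del$ on visual balls of a fixed radius $r_x>0$, the paper instead introduces the upper semicontinuous envelope $\ov{h}_x$ of Lemma \ref{approxgroprodbyctns}, writes it as a decreasing limit of uniformly bounded continuous functions, and passes to the limit by weak-$*$ convergence plus dominated convergence. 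Both devices are legitimate, and your additive-error trick is arguably cleaner for this proposition precisely because $x$ is fixed and $\groprod{x}{\cdot}{1,d_\infty}$ is bounded by $d_\infty(x,1)$; what it buys the paper to set things up the other way is that the semicontinuous-envelope and visual-metric machinery carries over to the second half of the proof of Theorem \ref{BMctns}, where the analogous function $\groprod{p}{q}{1,d_\infty}$ of \emph{two} boundary points is unbounded near the diagonal and a finite partition at a single scale would no longer suffice. Your final step upgrading from continuous $f\ge 0$ to Borel sets by regularity is the same one the paper invokes.
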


We will need the next lemma, which is an immediate consequence of \eqref{gromovinboundary}.
\begin{lemma}\label{approxgroprodbyctns}
Let $d\in \calD(G)$ be $\del$-hyperbolic. Then for all $x\in G$, the functions $\ov{h}_x,\underline{h}_x:\partial G\ra \R$ defined by 
$$\ov{h}_x(p):=\limsup_{q\to p}{e^{\groprod{x}{q}{1,d}}},\hspace{5mm}\underline{h}_x(p):=\liminf_{q\to p}{e^{\groprod{x}{q}{1,d}}}$$
are upper and lower semi-continuous respectively, bounded above by $e^{d(x,1)}$, and satisfy the inequalities
\begin{equation*}
   \underline{h}_x(p)\leq \ov{h}_x(p), \hspace{5mm}    e^{-3\del}\ov{h}_x(p)\leq e^{\groprod{x}{p}{1,d}}\leq e^{3\del}\underline{h}_x(p)  
\end{equation*}
for all $p\in \partial G$.
\end{lemma}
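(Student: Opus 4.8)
The plan is to handle the four assertions in increasing order of difficulty, noting that only the last pair of inequalities actually invokes hyperbolicity. Throughout I would write $g(q):=e^{\groprod{x}{q}{1,d}}$ for the function on $\partial G$ whose upper and lower limits define $\ov{h}_x$ and $\underline{h}_x$, and fix a visual metric $\varrho$ on $\partial G$ as in \eqref{eqvismet}.

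First I would dispose of boundedness, the trivial inequality, and semicontinuity. For the bound $g\le e^{d(x,1)}$: for finite points $q_n$ the triangle inequality gives $2\groprod{x}{q_n}{1,d}=d(x,1)+d(q_n,1)-d(x,q_n)\le 2d(x,1)$, and passing to the extended product (via its definition as a $\sup$ of $\liminf$s) yields $\groprod{x}{q}{1,d}\le d(x,1)$ for every $q\in\partial G$; hence $g\le e^{d(x,1)}$, and $\ov{h}_x$, $\underline{h}_x$ inherit this bound. The inequality $\underline{h}_x\le\ov{h}_x$ is immediate from $\liminf\le\limsup$. For the semicontinuity I would invoke the purely formal fact that on a metric space, for any function $g$, the map $p\mapsto\inf_{r>0}\sup_{0<\varrho(q,p)\le r}g(q)$ is upper semicontinuous and the corresponding lower limit is lower semicontinuous; this uses no hyperbolicity, only metrizability of $\partial G$.

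The core of the lemma is the pair of inequalities, and here I would use \eqref{gromovinboundary} with $w=1$, together with the single observation that $\groprod{q}{p}{1,d}\to\infty$ as $q\to p$ in $\partial G$, while all the point–boundary products $\groprod{x}{\,\cdot\,}{1,d}$ stay bounded by $d(x,1)$. For the upper bound on $e^{\groprod{x}{p}{1,d}}$, apply \eqref{gromovinboundary} to the triple $(x,p,q)$ with $p$ in the middle: $\groprod{x}{q}{1,d}\ge\min(\groprod{x}{p}{1,d},\groprod{p}{q}{1,d})-3\del$; since $\groprod{p}{q}{1,d}\to\infty$ and $\groprod{x}{p}{1,d}\le d(x,1)$, the minimum equals $\groprod{x}{p}{1,d}$ for $q$ close enough to $p$, so $\groprod{x}{q}{1,d}\ge\groprod{x}{p}{1,d}-3\del$. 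Taking $\liminf_{q\to p}$ gives $\log\underline{h}_x(p)\ge\groprod{x}{p}{1,d}-3\del$, i.e.\ $e^{\groprod{x}{p}{1,d}}\le e^{3\del}\underline{h}_x(p)$. Symmetrically, applying \eqref{gromovinboundary} to $(x,q,p)$ with $q$ in the middle gives $\groprod{x}{p}{1,d}\ge\groprod{x}{q}{1,d}-3\del$ for $q$ near $p$, whence $\groprod{x}{q}{1,d}\le\groprod{x}{p}{1,d}+3\del$; taking $\limsup_{q\to p}$ yields $\log\ov{h}_x(p)\le\groprod{x}{p}{1,d}+3\del$, i.e.\ $e^{-3\del}\ov{h}_x(p)\le e^{\groprod{x}{p}{1,d}}$.

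The only genuinely delicate point is the resolution of the minimum in \eqref{gromovinboundary}: one must be sure that, as $q\to p$, the diverging boundary–boundary term $\groprod{q}{p}{1,d}$ is never the one selected, so that the bounded term survives in the estimate. This is exactly where the uniform bound $\groprod{x}{\,\cdot\,}{1,d}\le d(x,1)$ from the first step is used. Everything else — the semicontinuity and the boundedness — is formal, so I expect the write-up to be short.
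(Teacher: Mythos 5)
Your proof is correct and is essentially the paper's argument: the paper gives no proof at all, stating only that the lemma ``is an immediate consequence of \eqref{gromovinboundary}'', and your write-up is exactly the expansion of that remark (resolving the minimum in \eqref{gromovinboundary} using $\groprod{x}{\cdot}{1,d}\le d(x,1)$ and $\groprod{q}{p}{1,d}\to\infty$, plus the formal semicontinuity of punctured upper/lower limits).
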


\begin{proof}[Proof of Proposition \ref{convtoqc}]
Quasiconformality follows immediately from \eqref{eqqcintegral} since $\nu_\infty$ is regular. We will only prove the second inequality in \eqref{eqqcintegral}, as the first one is proven in the same way. Let $f\in C(\partial G)$ with $f\geq 0$ and $x\in G$. By inequalities \eqref{equnifqc} and \eqref{eqbuseq} we get
\begin{align*}
    \int{fdx\nu_\infty}& =\lim_{n\to \infty}{ \int{fdx\nu_n}}=\lim_{n\to \infty}{ \int{f\frac{dx\nu_n}{d\nu_n}d\nu_n}} \\
    & \leq D\limsup_{n\to \infty}{ \int{f(p)e^{-\beta_{d_n}(x,1;p)}d\nu_n(p)}} \\
    & \leq De^{2\del}\limsup_{n\to \infty}{ \left(e^{-d_n(x,1)}\int{f(p)e^{2\groprod{x}{p}{1,d_n}}d\nu_n(p)}\right)}.
\end{align*}
Since the sequence $(\Lam_n)_n$ is monotone, by \eqref{distconv} and \eqref{groprodconv} we obtain
\begin{align*}
    \int{fdx\nu_\infty} & \leq D e^{2\del+3\al+2Q+2}\limsup_{n\to \infty}{ \left(e^{-\Lam_n^{-1}
    d_\infty(x,1)}\int{f(p)e^{2\Lam_n\groprod{x}{p}{1,d_\infty}}d\nu_n(p)}\right)}\\
    & \leq D e^{2\del+3\al+2Q+2}e^{-
    d_\infty(x,1)}\liminf_{k\to \infty}{\limsup_{n\to \infty}{ \left(\int{f(p)(e^{2\groprod{x}{p}{1,d_\infty}})^{\Lam_k}d\nu_n(p)}\right)}}.
\end{align*}

At this point, we would like to use the convergence $\nu_n \stackrel{\ast}{\rightharpoonup} \nu_\infty$ to deduce $$\limsup_{n\to \infty}{ \left(\int{f(p)(e^{2\groprod{x}{p}{1,d_\infty}})^{\Lam_k}d\nu_n(p)}\right)}= \int{f(p)(e^{2\groprod{x}{p}{1,d_\infty}})^{\Lam_k}d\nu_\infty(p)}.$$
However, the function $p \to (e^{2\groprod{x}{p}{1,d_\infty}})^{\Lam_k}$ is not necessarily continuous, so this last equation might not hold. Instead, we consider the function $\ov{h}=\ov{h}_x:\partial G\ra \R$ from 
Lemma \ref{approxgroprodbyctns} with respect to $d=d_\infty$. Since $\partial G$ is metrizable and $\ov{h}$ is upper semi-continuous, there exists a decreasing sequence $\ov{h}_m$ of continuous functions on $\partial G$ that converges pointwise to $\ov{h}$. Moreover, since $\ov{h}$ is bounded, we can assume that the sequence $\ov{h}_m$ is uniformly bounded. Therefore, the convergence $\nu_n \stackrel{\ast}{\rightharpoonup} \nu_\infty$ and the dominated convergence theorem yields
\begin{align*}
    \limsup_{n\to \infty}{ \left(\int{f(p)(e^{2\groprod{x}{p}{1,d_\infty}})^{\Lam_k}d\nu_n(p)}\right)} & \leq  e^{6\Lam_k\del}\limsup_{n\to \infty}{ \left(\int{f\cdot(\ov{h})^{2\Lam_k}d\nu_n}\right)}\\
     &\leq  e^{6\Lam_k\del}\liminf_{m\to \infty}{\limsup_{n\to \infty}{ \left(\int{f\cdot(\ov{h}_m)^{2\Lam_k}d\nu_n}\right)}}\\
    &=  e^{6\Lam_k\del}\liminf_{m\to \infty}{ \left(\int{f\cdot (\ov{h}_m)^{2\Lam_k}d\nu_\infty}\right)}\\
    &=  e^{6\Lam_k\del} \int{f\cdot(\ov{h})^{2\Lam_k}d\nu_\infty}\\
     & \leq   e^{12\Lam_k\del} \int{f(p)(e^{2\groprod{x}{p}{1,d_\infty}})^{\Lam_k}d\nu_\infty(p)}.
\end{align*}
Combining these two inequalities, and applying the dominated convergence theorem to the sequence $k\mapsto (p\mapsto f(p)(e^{2\groprod{x}{p}{1,d_\infty}})^{\Lam_k})$ we deduce
\begin{align*}
    \int{fdx\nu_\infty} & \leq D e^{14\del+3\al+2Q+2}\int{f(p)e^{-
    d_\infty(x,1)+2\groprod{x}{p}{1,d_\infty}}d\nu_\infty(p)} \\ 
    & \leq D e^{16\del+3\al+2Q+2}\int{f(p)e^{-\beta_{d_
    \infty}(x,1;p)}d\nu_\infty(p)},
\end{align*}
where in the last inequality we used \eqref{eqbuseq}. Since $M=D e^{16\del+3\al+2Q+2}$ is independent of $f$ and $x$, the conclusion follows.
\end{proof}

As the quasiconformal measures $\nu_n$ satisfy \eqref{equnifqc} and each $d_n$ is $\del$-hyperbolic, there is a constant $R$ depending only on $\del$ and $D_\del$ such that for every $n$ there is a measure $m_n$ representing $BM(\rho_n)$ and satisfying
\begin{equation}\label{eqapproxbm}
    R^{-1}\int_A{e^{2\groprod{p}{q}{1,d_n}}d\nu_n(p)d\nu_n(q)} \leq  m_n(A)\leq R\int_A{e^{2\groprod{p}{q}{1,d_n}}d\nu_n(p)d\nu_n(q)}
\end{equation}
for any Borel subset $A\subset \partial^2 G$, see e.g. \cite[Sec.~3]{Furman2002coarsenegcurv}.
\begin{lemma}
After taking a subsequence, $m_n$ weak-$*$ converges to a positive Radon measure $\om$.
\end{lemma}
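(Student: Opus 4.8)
The plan is to realize $\om$ as a vague (weak-$*$) limit, using the standard compactness of uniformly locally bounded positive Radon measures on the locally compact space $\partial^2 G$, together with the comparison \eqref{eqapproxbm}. Since $\partial G$ is compact metrizable and the diagonal $\corchete{(p,p)\colon p\in \partial G}$ is closed in $\partial G\times \partial G$, the space $\partial^2 G$ is locally compact, $\sigma$-compact and metrizable; hence it suffices to extract a subsequence along which $\int f\,dm_n$ converges for every $f\in C_c(\partial^2 G)$ and to verify that the limit is a nonzero Radon measure.

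First I would bound $m_n$ uniformly on each compact set. Let $K\subset \partial^2 G$ be compact. Since $K$ avoids the diagonal, a fixed visual metric $\varrho_\ep$ for $d_\infty$ is bounded below on $K$, so by \eqref{eqvismet} the Gromov product $\groprod{p}{q}{1,d_\infty}$ is bounded above by some constant $M_K$ on $K$. The comparison \eqref{groprodconv}, applied to pairs of boundary points (its extension to $\partial G$ being immediate from the definition of the boundary Gromov product, exactly as used in the proof of Proposition \ref{convtoqc}), then gives $\groprod{p}{q}{1,d_n}\le \Lam_n M_K+Q\le \Lam_1 M_K+Q$ for every $n$, because $(\Lam_n)_n$ is decreasing. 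As each $\nu_n$ is a probability measure, the upper bound in \eqref{eqapproxbm} yields
$$m_n(K)\ \le\ R\,e^{2(\Lam_1 M_K+Q)}\,(\nu_n\otimes\nu_n)(K)\ \le\ R\,e^{2(\Lam_1 M_K+Q)},$$
uniformly in $n$. By the vague compactness of a uniformly locally bounded sequence of positive Radon measures, after passing to a subsequence $m_n$ converges weak-$*$ to a Radon measure $\om\ge 0$.

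It then remains to check that $\om\ne 0$. The key input is that $\nu_\infty$, being quasiconformal for $d_\infty$ by Proposition \ref{convtoqc}, is non-atomic, a standard property of quasiconformal measures on boundaries of non-elementary hyperbolic groups \cite{Coornaert1993PSmeasures}; hence $\nu_\infty\otimes\nu_\infty$ gives zero mass to the diagonal and full mass $1$ to $\partial^2 G$. I would therefore fix $f\in C_c(\partial^2 G)$ with $f\ge 0$ and $\int f\,d(\nu_\infty\otimes\nu_\infty)>0$; extending $f$ by $0$ near the diagonal makes it continuous on the compact space $\partial G\times \partial G$. Since $\nu_n\to\nu_\infty$ weak-$*$ implies $\nu_n\otimes\nu_n\to\nu_\infty\otimes\nu_\infty$ weak-$*$, and since $\groprod{p}{q}{1,d_n}\ge 0$ gives $e^{2\groprod{p}{q}{1,d_n}}\ge 1$, the lower bound in \eqref{eqapproxbm} gives
$$\int f\,dm_n\ \ge\ R^{-1}\int f\,d(\nu_n\otimes\nu_n)\ \xrightarrow[n\to\infty]{}\ R^{-1}\int f\,d(\nu_\infty\otimes\nu_\infty)\ >\ 0.$$
Passing to the limit along the chosen subsequence gives $\int f\,d\om\ge R^{-1}\int f\,d(\nu_\infty\otimes\nu_\infty)>0$, so $\om$ is positive.

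I expect the main obstacle to be the control near the diagonal: the density $e^{2\groprod{p}{q}{1,d_n}}$ of $m_n$ relative to $\nu_n\otimes\nu_n$ blows up as $p\to q$, so without restricting to compact subsets of $\partial^2 G$ the masses need not be bounded, and mass could a priori escape into the diagonal in the limit. The two points that resolve this are (i) the uniform upper bound on the Gromov product over each compact $K$, which relies on the boundary form of \eqref{groprodconv} and the monotonicity of $\Lam_n$, and (ii) the non-atomicity of $\nu_\infty$, which prevents the limiting mass from concentrating on the diagonal and is precisely what forces $\om\ne 0$.
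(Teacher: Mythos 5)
Your proof is correct and follows essentially the same route as the paper: uniform bounds for $m_n$ on compact subsets of $\partial^2 G$ via \eqref{eqapproxbm}, \eqref{groprodconv} and the monotonicity of $(\Lam_n)_n$, followed by vague compactness of locally uniformly bounded positive Radon measures. The only (minor) difference is in the positivity step: the paper tests $m_n$ against an open product set $U_1\times U_2$ inside a compact set and uses that the quasiconformal measure $\nu_\infty$ has full support, whereas you test against a function $f\in C_c(\partial^2 G)$ and use non-atomicity of $\nu_\infty$ to ensure $\nu_\infty\otimes\nu_\infty$ charges $\partial^2 G$; both are standard properties of quasiconformal measures, so either works.
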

\begin{proof}
Since $\partial^2 G$ is locally compact and metrizable, it is $\sigma$-compact, so it is enough to show that for any compact set $C\subset \partial^2 G$ with non-empty interior, the sequence $(m_n(C))_n$ is bounded by above and below by positive constants.
For the upper bound, we apply \eqref{eqapproxbm},  \eqref{groprodconv} and the monotonicity of $(\Lam_n)_n$ to obtain
\begin{align*}
    m_n(C)& \leq R\int_C{e^{2\groprod{p}{q}{1,d_n}}d\nu_n(p)d\nu_n(q)}\\
    & \leq R\int_C{e^{2\Lam_1\groprod{p}{q}{1,d_\infty}+2Q}d\nu_n(p)d\nu_n(q)}\\ & \leq R\sup_{(p,q)\in C}{\left(e^{2\Lam_1\groprod{p}{q}{1,d_\infty}+2Q}\right)}(\nu_n\otimes \nu_n)(C) \leq R\sup_{(p,q)\in C}{\left(e^{2\Lam_1\groprod{p}{q}{1,d_\infty}+2Q}\right)}.
\end{align*}
The last quantity is finite by compactness of $C$ and is independent of $n$. 

For the lower bound, we may assume that $C$ contains a non-empty open set $U=U_1\times U_2$ with each $U_i\subset \partial G$, so that the convergence $\nu_n \stackrel{\ast}{\rightharpoonup} \nu_\infty$ gives
\begin{align*}
    \liminf_{n}{m_n(C)} \geq  \liminf_{n}{m_n(U)} &\geq R^{-1}\liminf_n{\nu_n(U_1)\nu_n(U_1)}\geq R^{-1}\nu_\infty(U_1)\nu_{\infty}(U_2). 
\end{align*}
The last term is positive since the measure $\nu_\infty$ has full support, and hence the sequence $(m_n(C))_n$ is bounded by below by a positive number.
\end{proof}

Now we finish the proof of Theorem \ref{BMctns}, for which we are left to show that $\om$ represents $BM(\rho_\infty)$.
Indeed, given $f\in C_c(\partial^2 G)$ with $f\geq 0$, by \eqref{eqapproxbm} and \eqref{groprodconv} we have 
\begin{align*}
    \int{fd\om} & =\lim_{n\to \infty}{\int{fdm_n}}\leq R\limsup_{n\to \infty}{\int{f(p,q)e^{2\groprod{p}{q}{1,d_n}}d\nu_n(p)d\nu_n(q)}}  \\ & \leq Re^{2Q}\liminf_{k\to \infty}{\limsup_{n\to \infty}{\left(\int{f(p,q)e^{2\Lam_k\groprod{p}{q}{1,d_\infty}}d\nu_n(p)d\nu_n(q)}\right)}}. 
\end{align*}

Let $\ep>0$ be such $\ep\del<\frac{\log{2}}{6}$, and let $\varrho_\ep$ be a visual metric on $\partial G$ satisfying \eqref{eqvismet} with respect to $d_\infty$. As $f$ has compact support in $\partial^2 G$, the functions $(p,q)\mapsto f(p,q)(\varrho_{\ep}(p,q)^{-1/\ep})^{2\Lam_k}$ are continuous on $(\partial G)^2$ and uniformly bounded. Also, since $\nu_n \stackrel{\ast}{\rightharpoonup} \nu_\infty$ and $\partial G$ is separable, $\nu_n\otimes \nu_n$ weak-$*$ converges to $\nu_\infty \otimes \nu_\infty$  \cite[Thm.~2.8]{Billingsley1999Convprob}. Therefore, by the dominated convergence theorem we obtain
\begin{align*}
    \int{fd\om} & \leq Re^{2Q}\liminf_{k\to \infty}{\limsup_{n\to \infty}{\left(\int{f(p,q)(\varrho_{\ep}(p,q)^{-1/\ep})^{2\Lam_k}d\nu_n(p)d\nu_n(q)}\right)}} \\
    & \leq Re^{2Q}\liminf_{k\to \infty}{ \left(\int{f(p,q)(\varrho_{\ep}(p,q)^{-1/\ep})^{2\Lam_k}d\nu_\infty(p)d\nu_\infty(q)}\right)} \\
    & \leq Re^{2Q}\int{f(p,q)\varrho_{\ep}(p,q)^{-2/\ep}d\nu_\infty(p)d\nu_\infty(q)} \\
    & \leq Re^{2Q}(3-2e^{3\ep\del})^{-2/\ep}\int{f(p,q)e^{2\groprod{p}{q}{1,d_\infty}}d\nu_\infty(p)d\nu_\infty(q)}.
\end{align*}
As $\nu_\infty$ is quasiconformal for $d_\infty$, we can find a measure $m_\infty$ representing $BM(\rho_\infty)$ in the same class of $e^{2\groprod{p}{q}{1,d_\infty}}d\nu_\infty(p)d\nu_\infty(q)$ and with essentially uniformly bounded Radon-Nikodym derivatives, and hence there is a constant $L\geq 1$ independent of $f$ such that $$\int{fd\om}\leq L\int{fdm_\infty}.$$

In the same way, we can prove that there is a constant $L'$ such that $\int{fdm_\infty}\leq L'\int{fd\om}$, implying that $\om$ and $m_\infty$ are absolutely continuous with respect to each other, and that the Radon-Nikodym derivative $\frac{d\om}{dm_\infty}$ is essentially bounded by above, and by below by a positive number. To conclude the proof of Theorem \ref{BMctns}, note that $\om$ is $G$-invariant, being the limit of $G$-invariant measures, and that $m_\infty$ is $G$-ergodic \cite[Thm.~1.4]{BaderFurman2017Ergodichyp}. In consequence, $\frac{d\om}{dm_\infty}$ is constant almost everywhere, and there is some $\lam>0$ such that $\om=\lam m_\infty$, so that $BM(\rho_n)\to BM(\rho_\infty)$. \qed

We end with this section with the proof of Theorem \ref{meandistctns}, which follows from the properties of the Manhattan curve, extensively studied in \cite{CantrellTanaka2021Manhattan}. Given $d,d'\in \calD(G)$, the \emph{Manhattan curve} for the pair $(d,d')$ is the function $\Thet:\R \ra \R$ which maps $a$ to the critical exponent of 
$$b \mapsto  \calP_{d,d'}(a,b)=\sum_{x\in G}{e^{-ad'(x,1)-bd(x,1)}}.$$
The Manhattan curve is decreasing, convex, and continuously differentiable, and its relation to the mean distortion is given by the identity 
\begin{equation}\label{tauTheta'}
    \tau(d'/d)=-\Thet'(0),
\end{equation}
see Theorems 1.1 and 3.12 in \cite{CantrellTanaka2021Manhattan}.
\begin{proof}[Proof of Theorem \ref{meandistctns}]
Consider $\rho,\rho'\in \scrD(G)$ and sequences $(\rho_n)_n,(\rho'_n)_n$ in $\scrD(G)$ converging to $\rho$ and $\rho'$ respectively, for which we claim that $\tau(\rho'_n/\rho_n)$ tends to $\tau(\rho'/\rho)$. For each $n\geq 1$ choose representatives $d_{n}\in \hat{\rho}_n$ and $d_n'\in \hat{\rho}'_n$, and let $\Thet_n$ be the Manhattan curve for the pair $(d_n,d'_n)$. Similarly, choose $d\in \hat{\rho}$ and $d'\in \hat{\rho}'$ and let $\Thet$ be the Manhattan curve for $(d,d')$. From \eqref{tauTheta'} we obtain $\tau(\rho'_n/\rho_n)=-\Thet'_n(0)$ and $\tau(\rho'/\rho)=-\Thet'(0)$, so it is enough to show that $\Thet'_n$ converges to $\Thet'$ pointwise.

Let $(\Lam_n)_n$ and $(C_n)_n$ be sequences such that $\Lam_n\to 1$ and satisfying 
\begin{equation}\label{compmetrics}
    \Lam_n^{-1}d-C_n\leq d_n\leq \Lam_nd+C_n \hspace{5mm} \text{and} \hspace{5mm} \Lam_n^{-1}d'-C_n\leq d'_n\leq \Lam_nd'+C_n
\end{equation}
for all $n$. For any $a,b\in \R$, from \eqref{compmetrics} we get 
$$e^{-(|a|+|b|)C_n}\calP_{d,d'}(\max(\Lam_na,\Lam_n^{-1}a),\max(\Lam_nb,\Lam_n^{-1}b))\leq \calP_{d_n,d'_n}(a,b),$$
so that \begin{equation*}
    \Thet(\max(\Lam_na,\Lam_n^{-1}a))\leq \max(\Lam_n\Thet_n(a),\Lam_n^{-1}\Thet_n(a)),
\end{equation*}
and by the same argument we also get 
\begin{equation*}
    \Thet(\min(\Lam_na,\Lam_n^{-1}a))\geq \min(\Lam_n\Thet_n(a),\Lam_n^{-1}\Thet_n(a)).
\end{equation*}
Continuity of $\Thet$ then implies that $\Thet_n$ converges pointwise to $\Thet$, and since $\Thet$ is differentiable and all the curves $\Thet_n$ are convex and differentiable, we conclude that $\Thet_n'$ converges pointwise to $\Thet'$, as desired.
\end{proof}

\medskip
\hspace{-4.3mm}\textbf{Acknowledgment}\hspace{2mm} 
I am grateful to Ian Agol for very interesting discussions throughout all this work. I also thank Stephen Cantrell and Ryokichi Tanaka for the valuable correspondence and useful comments about the Manhattan curve and mean distortion, Peter Haïssinsky for the corrections, and Chi Cheuk Tsang for the careful reading. My thanks to the anonymous referee for very helpful suggestions.

\small{Eduardo Oreg\'on-Reyes (\texttt{eoregon@berkeley.edu})}\\
\small{Department of Mathematics}\\
\small{University of California at Berkeley}\\
\small{1087 Evans Hall, Berkeley, CA 94720-3840, U.S.A.}

\end{document}